\newcommand{\N}{\mathbb N} 
\newcommand{\R}{\mathbb R} 
\newcommand{\Rn}{\R^n} 
\newcommand{\CV}{\operatorname{Conv}(\Rn)} 
\newcommand{\CVf}{\operatorname{Conv}(\Rn,\R)} 
\newcommand{\CVpa}{\operatorname{Conv_{p.a.}}(\Rn)} 
\newcommand{\K}{{\mathcal K}}
\newcommand{\Kn}{\K^n} 
\newcommand{\Ko}{\Kn_{o}} 
\newcommand{\KoOne}{\K_{o}^1} 
\newcommand{\Koin}{\Kn_{(o)}} 
\renewcommand{\P}{{\mathcal P}}
\newcommand{\Pn}{\P^n} 
\newcommand{\Po}{\Pn_{o}}
\newcommand{\Poin}{\Pn_{(o)}}
\newcommand{\Q}{{\mathcal Q}}
\newcommand{\Qn}{\Q^n} 
\newcommand{\cL}{{\mathcal L}} 
\newcommand{\cS}{{\mathcal S}}
\newcommand{\mx}{\mathbin{\vee}} 
\newcommand{\mn}{\mathbin{\wedge}} 
\newcommand{\Ind}{\mathrm{I}}
\renewcommand{\l}{\ell}
\newcommand{\e}{\varepsilon}
\renewcommand{\d}{\,\mathrm{d}}
\newcommand{\D}{\,\mathrm{D}}
\newcommand\SLn{\operatorname{SL}(n)}
\newcommand{\dom}{\operatorname{dom}}
\newcommand{\interior}{\operatorname{int}}
\newcommand{\cl}{\operatorname{cl}}
\newcommand{\conv}{\operatorname{conv}}
\newcommand{\pos}{\operatorname{pos}}
\newcommand{\epi}{\operatorname{epi}}
\newcommand{\reg}{\operatorname{reg}}
\newcommand{\argmax}{\operatorname{argmax}}
\newcommand{\elim}{\operatorname{epi-lim}}
\newcommand{\oZ}{\operatorname{Z}}
\newcommand{\oY}{\operatorname{Y}}
\newcommand{\eto}{\stackrel{epi}{\longrightarrow}}
\newtheorem{lemma}{Lemma}[section]
\newtheorem{theorem}[lemma]{Theorem}
\newtheorem*{theorem*}{Theorem}
\newtheorem{proposition}[lemma]{Proposition}
\newtheorem*{corollary*}{Corollary}
\theoremstyle{definition}
\theoremstyle{remark}
\newtheorem{remark}[lemma]{Remark}
\title{Volume, Polar Volume and Euler Characteristic for Convex Functions}
\author{Fabian Mussnig}
\date{}
\begin{document}

\maketitle

\begin{abstract}
Functional analogs of the Euler characteristic and volume together with a new analog of the polar volume are characterized as non-negative, continuous, $\SLn$ and translation invariant valuations on the space of finite, convex and coercive functions on $\Rn$.
\bigskip

{\noindent
2010 AMS subject classification: 26B25 (46A40, 52A20, 52A41, 52B45).}
\end{abstract}

\section{Introduction and Statement of the Main Result}
A map $\oZ$ defined on the subset $\cS$ of a lattice $(\cL,\vee, \wedge)$ and taking values in an abelian semigroup is called a \emph{valuation} if
\begin{equation*}
\label{eq:valuation}
\oZ(u\vee v)+\oZ(u\wedge v)=\oZ(u) +\oZ (v)
\end{equation*}
whenever $u,v, u\vee v, u\wedge v\in \cS$. Valuations defined on the set of convex bodies (compact convex sets), $\Kn$, in $\Rn$ have been studied since Dehn's solution of Hilbert's Third Problem in 1901. In this case, $\mx$ and $\mn$ denote union and intersection, respectively, and the set $\Kn$ is equipped with the topology induced by the Hausdorff metric. The first classification of valuations on convex bodies and a characterization of the Euler characteristic, $V_0$, and the $n$-dimensional volume (that is, the Lebesgue measure), $V_n$, was obtained by Blaschke \cite{blaschke}.

\begin{theorem}[Blaschke]
\label{thm:blaschke}
For $n\geq 2$, a map $\mu:\Kn\to\R$ is a continuous, $\SLn$ and translation invariant valuation if and only if there exist constants $c_0,c_1\in\R$ such that
$$\mu(K)=c_0 V_0(K)+c_1 V_n(K)$$
for every $K\in\Kn$.
\end{theorem}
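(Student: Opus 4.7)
The ``if'' direction is immediate: $V_0$ (identically $1$) and $V_n$ are both continuous, translation-invariant, $\SLn$-invariant valuations. For the converse my plan is to reduce the problem in stages --- from convex bodies to polytopes (by density of $\Pn\subset\Kn$ in the Hausdorff metric and continuity of $\mu$), from polytopes to simplices (by a triangulation argument via the valuation identity), and from simplices to a single real function of volume (by $\SLn$ and translation invariance) --- at which point the valuation identity becomes Cauchy's functional equation and continuity forces linearity.

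\textbf{Normalization and lower-dimensional bodies.} First set $c_0:=\mu(\{o\})$; by translation invariance $\mu(\{x\})=c_0$ for every $x\in\Rn$. Next I would show $\mu(K)=c_0$ for every convex body $K$ lying in a proper affine subspace. After translating, we may assume $K\subset\{x_n=0\}$; the family $T_\lambda=\mathrm{diag}(1/\lambda,\ldots,1/\lambda,\lambda^{\,n-1})\in\SLn$ contracts $K$ within the hyperplane to a point as $\lambda\to\infty$, so by $\SLn$ invariance and continuity $\mu(K)=\lim_\lambda \mu(T_\lambda K)=c_0$. In particular $\mu$ agrees with $c_0V_0+c_1V_n$ on every lower-dimensional simplex, for any $c_1$.

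\textbf{Simplices.} The affine group generated by $\SLn$ and translations acts transitively on the set of $n$-simplices of any fixed volume, so $\mu(S)=f(V_n(S))$ for some $f:(0,\infty)\to\R$, extended by $f(0):=c_0$. To get a functional equation, take $n$-simplex $S$ with vertices $v_0,\ldots,v_n$ and a variable point $p$ on the edge $[v_0,v_1]$; the hyperplane spanned by $p,v_2,\ldots,v_n$ dissects $S$ into two $n$-simplices $S_1=\conv(v_0,p,v_2,\ldots,v_n)$ and $S_2=\conv(v_1,p,v_2,\ldots,v_n)$ with $S_1\cap S_2$ an $(n-1)$-simplex. Since $S_1\cup S_2=S$ is convex, the valuation identity gives
\[
f(V_n(S_1))+f(V_n(S_2))=f(V_n(S))+c_0.
\]
Letting $p$ vary along $[v_0,v_1]$ and the base simplex $S$ vary in size, I obtain $g(a+b)=g(a)+g(b)$ for all $a,b\ge 0$, where $g:=f-c_0$. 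Continuity of $\mu$ and of $V_n$ makes $g$ continuous, hence $g(t)=c_1 t$ for some $c_1\in\R$, and $\mu(S)=c_0V_0(S)+c_1V_n(S)$ on every simplex.

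\textbf{Extension to polytopes and convex bodies.} I would then extend the identity $\mu=c_0V_0+c_1V_n$ to an arbitrary polytope $P\in\Pn$ by induction on the number of vertices of $P$: for $P$ with more than $n+1$ vertices, cut $P$ by a hyperplane spanned by $n$ of its vertices to produce two polytopes with strictly fewer vertices each and with lower-dimensional intersection, then apply the valuation identity and the inductive hypothesis (together with the lower-dimensional case and the simplex case as base). Since $c_0V_0+c_1V_n$ satisfies the same identity, the induction closes. Finally, density of $\Pn$ in $\Kn$ and continuity of both sides extends the identity to all convex bodies. The main technical obstacle, as I see it, sits in this last step: the valuation identity is only two-piece, so passing through a general triangulation requires controlling convexity of intermediate unions. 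The cleanest workarounds are either a Groemer-style extension of $\mu$ to the lattice generated by $\Pn$ (allowing full inclusion--exclusion over a triangulation) or an inductive scheme in which each cut is engineered to keep both pieces convex, so that only the basic two-piece identity is ever invoked.
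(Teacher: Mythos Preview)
The paper does not prove this statement at all: Theorem~\ref{thm:blaschke} is quoted as Blaschke's classical result with a reference to \cite{blaschke}, and is used only as motivation for the functional analogs developed later. There is therefore no ``paper's own proof'' to compare against.

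As for your sketch itself, the overall architecture (reduce to polytopes by density, reduce to simplices via the valuation identity, use $\SLn$+translation transitivity on equi-volume simplices to get a function of volume, then Cauchy's equation) is the standard route and is sound. Two remarks. First, in the polytope step your claim that a hyperplane through $n$ chosen vertices cuts $P$ into two polytopes each with \emph{strictly fewer} vertices is not true in general: the hyperplane will typically meet other edges of $P$ and create new vertices on each side, so a naive vertex count does not decrease. You correctly flag this and name the right fixes --- either invoke Groemer's extension theorem to push $\mu$ to the lattice generated by $\Pn$ and then use full inclusion--exclusion on a triangulation, or set up an induction (e.g.\ on the number of facets, or via a ``shelling''-type argument) where each step peels off a simplex while keeping the remainder convex. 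Either works; just be explicit about which one you are using, since the bare two-piece identity alone does not close the induction as written. Second, your lower-dimensional argument is fine but can be streamlined: once you know $\mu$ is constant on points, $\SLn$-invariance with matrices of the form $\mathrm{diag}(\lambda,\lambda^{-1},1,\ldots,1)$ already handles bodies in any proper subspace after a rotation, exactly as you do.
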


Here, a valuation $\mu:\Kn\to\R$ is called \emph{$\SLn$ invariant} if $\mu(\phi K) = \mu(K)$ for all $\phi\in\SLn$ and $K\in\Kn$, where $\phi K =\{\phi x\,:\,x\in K\}$. Moreover, $\mu$ is said to be \emph{translation invariant} if $\mu(K+x)=\mu(K)$ for all $x\in\Rn$, with $K+x = \{y+x\,:\,y\in K\}$. See also \cite{Alesker99,Alesker01,bernig_fu,haberl,ludwig_reitzner_annals,  haberl_parapatits_moments,schuster_wannerer_mink_and_generalized, li_leng_orlicz,li_ma,abardia_wannerer,hlyz_acta} for some recent results on valuations on convex bodies and \cite{hadwiger,klain_rota} for more information on the classical theory.\par
More recently, valuations on function spaces have been introduced and studied. Here, $u\vee v$ denotes the pointwise maximum of $u$ and $v$ and $u\wedge v$ the pointwise minimum of two functions $u,v\in\cS$, where $\cS$ is a space of real-valued functions. The first results for valuations on Sobolev spaces were obtained by Ludwig \cite{ludwig_sobolev,ludwig_fisher} and Ma \cite{ma}. For $L^p$ spaces complete classifications for valuations intertwining the $\SLn$ were established in \cite{tsang_mink_val_on_lp, tsang_val_on_lp, ludwig_covariance}. Bobkov, Colesanti and Fragal{\`a} \cite{bobkov_colesanti_fragala} as well as Milman together with Rotem \cite{milman_rotem} extended intrinsic volumes to the space of quasi-concave functions (see also \cite{colesanti_fragala, klartag_milman}). A classification of rigid motion invariant valuations on quasi-concave functions was established by Colesanti and Lombardi \cite{colesanti_lombardi} and for definable functions such a result was previously established by Baryshnikov, Ghrist and Wright \cite{baryshnikov_ghrist_wright}. For further results, see also \cite{Alesker_convex,colesanti_lombardi_parapatits,colesanti_ludwig_mussnig_mink,ludwig_function, kone,wang_semi_val, ober_minkowski, tradacete_villanueva_adv,villanueva}.\par
For convex functions, a characterization of functional analogs of the Euler characteristic and volume was established in \cite{colesanti_ludwig_mussnig}. Let $\CV$ denote the space of all convex functions $u:\Rn\to(-\infty,+\infty]$ that are proper, lower semicontinuous and coercive. Here a function $u$ is \emph{proper} if it is not identically $+\infty$ and it is called \emph{coercive} if 
\begin{equation*}
\label{eq:coercive}
\lim_{\lvert x\rvert \to+ \infty} u(x)=+\infty.
\end{equation*}
Furthermore, the space $\CV$ is equipped with the topology associated to epi-convergence (see also Section \ref{se:convex_functions}).\par
We say that a map $\oZ$, defined on a space of real-valued functions $\cS$ on $\Rn$, is \emph{translation invariant} if $\oZ(u\circ \tau^{-1})=\oZ(u)$ for every $u\in\cS$ and translation $\tau:\Rn\to \Rn$. Moreover, $\oZ$ is called \emph{$\SLn$ invariant} if $\oZ(u\circ\phi^{-1}) = \oZ(u)$ for every $u\in\cS$ and $\phi\in\SLn$.

\begin{theorem}[\!\!\cite{colesanti_ludwig_mussnig}]
For $n\geq 2$, a map $\oZ:\CV\to[0,\infty)$ is a continuous, $\SLn$ and translation invariant valuation if and only if there exist continuous functions $\zeta_0,\zeta_1: \R \to [0,\infty)$ where $\zeta_1$ has finite moment of order $n-1$ such that 
\begin{equation*}
\oZ(u) = \zeta_0\big(\min\nolimits_{x\in\Rn}u(x)\big) + \int_{\dom u} \zeta_1\big(u(x)\big) \d x
\end{equation*}
for every $u\in\CV$.
\end{theorem}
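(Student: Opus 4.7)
The plan is to prove both directions, with the main work going into the converse. For the ``if'' direction, I would first verify that $\oZ$ as defined is well-posed on $\CV$: coercivity makes the sublevel sets of $u$ bounded convex bodies, and the finite moment of order $n-1$ on $\zeta_1$ combined with a layer-cake estimate bounds $\int_{\dom u}\zeta_1(u(x))\,dx$. Non-negativity is immediate, while $\SLn$ and translation invariance follow from change of variables applied separately to each summand (using $|\det\phi|=1$ for $\phi\in\SLn$ and invariance of $\min u$). The valuation property splits over the summands: for the integral piece I would use the layer-cake formula together with the identities $\{u\vee v\leq t\}=\{u\leq t\}\cap\{v\leq t\}$ and $\{u\wedge v\leq t\}=\{u\leq t\}\cup\{v\leq t\}$; for the $\zeta_0$ piece, additivity reduces to the observation that whenever $u\wedge v\in\CV$, the multiset $\{\min u,\min v\}$ coincides with $\{\min(u\vee v),\min(u\wedge v)\}$. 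Continuity under epi-convergence follows from compatibility of epi-convergence with passage to minimizers and to sublevel sets of coercive convex functions, together with a dominated-convergence argument sustained by the moment bound.

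For the converse, the first step is to reduce to Blaschke's theorem. For each $t\in\R$ and $K\in\Kn$, the function $\Ind_K^t$ equal to $t$ on $K$ and $+\infty$ off $K$ belongs to $\CV$, and the map $K\mapsto\oZ(\Ind_K^t)$ inherits from $\oZ$ the properties of being a non-negative, continuous, $\SLn$ and translation invariant valuation on $\Kn$. Blaschke's theorem then provides continuous functions $\zeta_0,\zeta_1:\R\to[0,\infty)$ (non-negativity comes from non-negativity of $\oZ$ and independently choosing $K$ to isolate the $V_0$ and $V_n$ contributions) such that
\begin{equation*}
\oZ(\Ind_K^t)=\zeta_0(t)\,V_0(K)+\zeta_1(t)\,V_n(K).
\end{equation*}
Since $\min \Ind_K^t=t$ and $\int_{K}\zeta_1(t)\,dx=\zeta_1(t)V_n(K)$, this identifies the candidate coefficients and proves the formula on the subclass of indicator-type functions.

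To propagate the formula to all of $\CV$, I would pass through the dense subclass of piecewise affine convex functions. Any piecewise affine $u\in\CV$ is the pointwise maximum of finitely many affine functions restricted to a convex polytope $P=\dom u$, and its epigraph admits a natural polyhedral subdivision. Iteratively applying the valuation property along this subdivision reduces $\oZ(u)$ to a sum of values on ``flat'' pieces $\Ind_{P_j}^{t_j}$, and combining the expressions of the previous step reassembles the claimed formula $\zeta_0(\min u)+\int_P\zeta_1(u(x))\,dx$. Density of piecewise affine functions in $\CV$ under epi-convergence, together with continuity of $\oZ$ and of the candidate right-hand side, then extends the formula. The finite moment condition on $\zeta_1$ emerges here: testing $\oZ$ on cones $u(x)=a|x|+b$ gives $\oZ(u)=\zeta_0(b)+n\omega_n\int_0^\infty\zeta_1(ar+b)r^{n-1}\,dr$, whose finiteness for all $a>0$ and $b\in\R$ is equivalent to $\zeta_1$ having finite moment of order $n-1$.

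The main obstacle is the polyhedral decomposition step: one must show that the valuation property, combined with continuity and invariance, really forces the integral form rather than some alternative transform of $u$. This is essentially a combinatorial and measure-theoretic bookkeeping on polyhedral subdivisions, ensuring that the $\zeta_0 V_0$ and $\zeta_1 V_n$ contributions from the flat blocks telescope to yield an integral over $\dom u$. Properly managing the interaction between the values of $u$ on individual cells and the volumes of those cells, and ensuring compatibility with epi-limits of unbounded-supported functions, is the most delicate part of the argument.
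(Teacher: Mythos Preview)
This theorem is quoted from \cite{colesanti_ludwig_mussnig} and not proved here; the present paper instead proves the parallel result on $\CVf$, and remarks that the original proof uses the Ludwig--Reitzner classification on $\Ko$. Both arguments hinge on \emph{cone functions} $\l_K+t$, and your proposal to work through indicator functions instead has a real gap in the propagation step. You claim that iterating the valuation identity along a polyhedral subdivision reduces a piecewise affine $u$ to flat pieces $\Ind_{P_j}+t_j$, but the valuation identity is only available when $u\wedge v$ remains convex, and one cannot assemble a sloped function from indicators this way: for instance, $\l_K\wedge(\Ind_L+t)$ has a downward jump across $\partial L$ whenever $L\subsetneq\{\l_K\le t\}$ and is never convex, while $\l_K\vee(\Ind_L+t)=\l_K+\Ind_L$ merely restricts the domain and produces no flat piece. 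Structurally, the epigraph of a piecewise affine function is a pointed polyhedron whose local model at each vertex is a cone, not a vertical cylinder $K\times[t,\infty)$; this is precisely why the reduction lemma (Lemma~\ref{le:reduction} here, the analog of \cite[Lemma~17]{colesanti_ludwig_mussnig}) has $\l_P+t$ as its base case. Indicators are not a determining class via the valuation property alone.

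The actual argument runs as follows. One first shows that two continuous translation-invariant valuations agreeing on all $\l_P+t$ must coincide (the reduction lemma). Then $K\mapsto\oZ(\l_K+t)$ is a continuous $\SLn$-invariant valuation on $\Ko$---but \emph{not} translation invariant, since $\l_{K+x}$ is not a translate of $\l_K$, so Blaschke is unavailable and one invokes the $\Ko$ classification to obtain $\oZ(\l_K+t)=\psi_0(t)+\psi_1(t)V_n(K)$. The density $\zeta_1$ is not read off directly; it is extracted from $\psi_1$ by a differentiation argument (the analog of Lemma~\ref{le:diff_lemma}) which computes $\oZ$ on indicators \emph{from} its values on cones via limits and shows $\psi_1\in C^n$ with $\zeta_1=\tfrac{(-1)^n}{n!}\psi_1^{(n)}\ge0$. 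The moment condition then follows from Lemma~\ref{le:moment_cond} together with $\psi_1(t)\to0$. Your Blaschke-on-indicators computation is valid and does identify $\zeta_1$ directly, but without the cone reduction you have no mechanism to conclude that $\oZ$ and the candidate functional agree on all of $\CV$.
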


Here, a function $\zeta:\R \to [0,\infty)$ has \emph{finite moment of order $n-1$} if $\int_0^{+\infty} t^{n-1} \zeta(t)\d t<+\infty$ and $\dom u$ denotes the \emph{domain} of $u$, that is, $\dom u=\{x\in\R^n: u(x)<+\infty\}$. Furthermore, for functions $u\in\CV$, the minimum is attained and hence finite.\par
The proof of this theorem uses the following classification of continuous and $\SLn$ invariant valuations on $\Ko$, the space of convex bodies which contain the origin. A functional $\mu:\Ko\to\R$ is a continuous and $\SLn$ invariant valuation if and only if there exist constants $c_0,c_1\in\R$ such that
\begin{equation*}
\mu(K)=c_0 V_0(K)+ c_n V_n(K)
\end{equation*}
for every $K\in\Ko$ (see, for example, \cite[Corollary 1.2]{ludwig_reitzner}). However, if one restricts to the class $\Koin$ of convex bodies that contain the origin in their interiors, an additional $\SLn$ invariant valuation appears. Therefore, let
$$K^*=\{x\in\Rn\,:\,x\cdot y \leq 1,\,\forall y\in K\}$$
denote the \emph{polar set} of $K\in\Kn$. If $K$ contains the origin in its interior, then also $K^*$ is an element of $\Koin$ and hence bounded. In this case, $K^*$ is also said to be the \emph{polar body} of $K$. We now define $V_n^*(K)=V_n(K^*)$ as the \emph{polar volume} of $K\in\Koin$, which is an important quantity in convex geometric analysis. For example the famous but still unsolved Mahler conjecture states that the minimum of $V_n^*(K)$ among all origin-symmetric bodies $K\in\Koin$ with $V_n(K)=1$ is attained by a hypercube. The first characterization of the polar volume was obtained by Ludwig in \cite{ludwig_val_poly_origin_int}. More recently, a long conjectured classification similar to Theorem~\ref{thm:blaschke} was obtained by Haberl and Parapatits.

\begin{theorem}[\!\!\cite{haberl_parapatits_centro}]
\label{thm:haberl_parapatits_centro}
For $n\geq 2$, a map $\mu:\Koin\to\R$ is a continuous and $\SLn$ invariant valuation if and only if there exist constants $c_0,c_1,c_2\in\R$ such that
\begin{equation*}
\mu(K)=c_0V_0(K)+c_1 V_n(K)+c_2 V_n^*(K),
\end{equation*}
for every $K\in\Koin$.
\end{theorem}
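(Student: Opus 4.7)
The ``if'' direction is routine. The Euler characteristic $V_0\equiv 1$ and the volume $V_n$ are classical continuous $\SLn$-invariant valuations. For $V_n^*$, the identity $(\phi K)^*=\phi^{-T}K^*$ together with $\lvert\det\phi^{-T}\rvert=1$ yields $\SLn$-invariance; continuity of the polarity map on $\Koin$ in the Hausdorff metric gives continuity; and the valuation identity follows from that of $V_n$ applied to polars, since whenever $K,L,K\cup L,K\cap L\in\Koin$ one necessarily has $K\subseteq L$ or $L\subseteq K$, so $K^*,L^*$ are nested as well and the identity is immediate.

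For the ``only if'' direction my plan has four steps. \emph{First}, by continuity of $\mu$ and density of polytopes in $\Koin$, it suffices to determine $\mu$ on $\Poin$. \emph{Second}, iterated inclusion--exclusion (Groemer's extension of the valuation property) reconstructs $\mu$ on a general polytope in $\Poin$ from its values on $n$-simplices in $\Poin$, modulo lower-dimensional terms which are not in $\Poin$ and must be treated by perturbation or a limiting argument. \emph{Third}, for an $n$-simplex $T\in\Poin$ with vertices $v_0,\dots,v_n$ one has the unique representation $0=\sum_i\lambda_iv_i$ with $\sum_i\lambda_i=1$ and $\lambda_i>0$; two such simplices lie in the same $\SLn$-orbit precisely when their unordered tuples $\{\lambda_0,\dots,\lambda_n\}$ agree and $V_n(T)$ matches. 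Hence $\mu(T)=g(\lambda_0,\dots,\lambda_n;V_n(T))$ for some continuous function $g$, symmetric in its first $n+1$ arguments. \emph{Fourth}, choosing pairs of simplices whose union and intersection again lie in $\Poin$ converts the valuation identity into functional equations for $g$, which together with continuity should pin $g$ down to a three-parameter family. To recognize this family as $\langle V_0,V_n,V_n^*\rangle$ one uses the standard explicit expression for $V_n(T^*)$ as a function of $V_n(T)$ and the weights $\lambda_i$, which shows that $V_n^*$ is a genuinely new invariant on simplices beyond $V_0$ and $V_n$ and must therefore appear.

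The main obstacle I expect is the fourth step. Polarity interacts nonlinearly with the lattice operations (${K\cup L}$ and ${K\cap L}$ passing to $K^*\cap L^*$ and $\cl\conv(K^*\cup L^*)$ respectively), so the resulting functional equations on $g$ are not of Cauchy type and it is not a priori clear that their solution space is finite-dimensional. A natural way to close this step is to use continuous one-parameter subgroups of $\SLn$ that deform a given simplex inside $\Poin$; this upgrades the discrete valuation equation to a first-order differential equation on $g$, whose solution space one then shows to be three-dimensional, spanned by $V_0$, $V_n$, and $V_n^*$. Alternatively, one can attempt an inductive dissection strategy along hyperplane sections, reducing to the one-dimensional classification as the base case.
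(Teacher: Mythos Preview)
The paper does \emph{not} prove this theorem; it is quoted from Haberl and Parapatits, \emph{The centro-affine Hadwiger theorem}, J.~Amer.~Math.~Soc.~\textbf{27} (2014), 685--705, and used here as a black box (see the proof of Lemma~\ref{le:growth_functions}). There is thus no proof in the paper to compare your proposal against.

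That said, your proposal contains a genuine error already in the ``if'' direction. You claim that whenever $K,L,K\cup L,K\cap L\in\Koin$ one must have $K\subseteq L$ or $L\subseteq K$. This is false: in $\R^2$ take $K=[-1,\tfrac12]\times[-1,1]$ and $L=[-\tfrac12,1]\times[-1,1]$. Both lie in $\Koin$, their union $[-1,1]^2$ is convex, their intersection $[-\tfrac12,\tfrac12]\times[-1,1]$ lies in $\Koin$, yet neither contains the other. The valuation property of $V_n^*$ is true, but for a different reason: one must show that if $K\cup L$ is convex then $K^*\cup L^*$ is convex as well, so that $V_n(\cl\conv(K^*\cup L^*))=V_n(K^*\cup L^*)$ and ordinary inclusion--exclusion for $V_n$ applies. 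This lemma requires an actual argument.

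Your ``only if'' outline is, as you yourself note, not a proof. Step~2 is problematic because the inclusion--exclusion pieces coming from a triangulation are lower-dimensional and hence leave $\Koin$; this is not a cosmetic issue, since $V_n^*$ blows up as a body degenerates, so one cannot simply ``perturb and pass to the limit''. Step~4 is the heart of the matter and you have only described a hope, not a mechanism. The actual proof in Haberl--Parapatits proceeds quite differently, via a careful analysis of the Cauchy-type functional equations that arise from specific simplex decompositions, and is substantially more delicate than your sketch suggests. If you want to pursue this, you should consult that paper directly rather than try to reinvent the argument.
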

In order to establish an analog of this result for convex functions, let
\begin{equation*}
u^*(x)=\sup\nolimits_{y\in\Rn} \big(x\cdot y - u(y)\big),\qquad x\in\Rn
\end{equation*}
denote the \emph{convex conjugate} of a function $u:\Rn\to[-\infty,\infty]$, where $x\cdot y$ denotes the inner product of $x,y\in\Rn$. If $u$ is proper and does not attain $-\infty$, then the function $u^*:\Rn\to(-\infty,+\infty]$ is always convex, proper and lower semicontinuous. Furthermore, convex conjugation is a continuous operation and is compatible with $\SLn$ transforms (for details see Section \ref{se:convex_functions}). Let $\CVf$ denote the space of all convex, coercive functions $u:\Rn\to\R$. We will prove the following result.

\begin{theorem*}
For $n\geq 2$, a map $\oZ:\CVf\to[0,\infty)$ is a continuous, $\SLn$ and translation invariant valuation if and only if there exist continuous functions $\zeta_0,\zeta_1,\zeta_2: \R \to [0,\infty)$ where $\zeta_1$ has finite moment of order $n-1$ and $\zeta_2(t)=0$ for all $t\geq T$ with some $T\in\R$ such that 
\begin{equation}
\label{eq:z_u}
\oZ(u) = \zeta_0\big(\min\nolimits_{x\in\Rn}u(x)\big) + \int_{\Rn} \zeta_1\big(u(x)\big) \d x + \int_{\dom u^*} \zeta_2 \big(\nabla u^*(x)\cdot x - u^*(x) \big) \d x
\end{equation}
for every $u\in\CVf$.
\end{theorem*}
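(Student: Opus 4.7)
The overall strategy parallels the proof of the $\CV$ classification in~\cite{colesanti_ludwig_mussnig}, but replaces the classification on $\Ko$ by Theorem~\ref{thm:haberl_parapatits_centro} on $\Koin$, which is what introduces the new $\zeta_2$-term. For sufficiency I would verify that each of the three summands on the right-hand side of~\eqref{eq:z_u} is a non-negative, continuous, $\SLn$ and translation invariant valuation on $\CVf$. The $\zeta_0$ and $\zeta_1$ pieces are already of this form on the larger space $\CV$, so the work concentrates on the $\zeta_2$-integral. Translation invariance follows from $(u\circ\tau^{-1})^*(y)=u^*(y)+x_0\cdot y$ for $\tau(x)=x+x_0$: the shift of $\nabla u^*$ by $x_0$ is exactly canceled in $\nabla u^*(y)\cdot y-u^*(y)$. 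The identity $(u\circ\phi^{-1})^*=u^*\circ\phi^{T}$ together with the change of variables $z=\phi^{T}y$ (Jacobian one) yields $\SLn$ invariance. Continuity relies on continuity of conjugation with respect to epi-convergence and on almost-everywhere convergence of gradients of epi-convergent convex functions. The valuation identity on $\CVf$ is established via the duality $(u\wedge v)^*=u^*\vee v^*$, valid whenever $u\wedge v$ is convex, together with the corresponding relation for $(u\vee v)^*$ on $\dom u^*\cap\dom v^*$.

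For necessity, after normalizing by translation so that $\min u$ is attained at the origin, the sublevel set $K_t(u)=\{u\leq t\}$ belongs to $\Koin$ for every $t>\min u$. Given $\oZ$, I would restrict attention to a convenient test class parametrized by a body $K\in\Koin$ and a strictly increasing one-dimensional convex profile, so that the induced functional on $\Koin$ inherits continuity, $\SLn$ invariance and the valuation property from $\oZ$. Theorem~\ref{thm:haberl_parapatits_centro} then yields
\begin{equation*}
K\mapsto c_{0}\,V_{0}(K)+c_{1}\,V_{n}(K)+c_{2}\,V_{n}^{*}(K)
\end{equation*}
with coefficients depending on the chosen profile. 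Varying the profile and integrating over its parameter via the layer-cake formula, the $V_0$-term produces $\zeta_0(\min u)$, the $V_n$-term produces $\int_{\Rn}\zeta_1(u(x))\d x$, and the $V_n^*$-term produces the $\zeta_2$-integral. The identification of this last piece uses a change-of-variables identity on $\dom u^*$ that writes $V_n^*(\{u\leq t\})$ in terms of the sublevel set $\{x\in\dom u^*:\nabla u^*(x)\cdot x-u^*(x)\leq t\}$, which is the key observation connecting Haberl--Parapatits with the $\zeta_2$-integrand. The representation on the test class is then extended to all of $\CVf$ by epi-density, using continuity of $\oZ$ and of each summand in~\eqref{eq:z_u}.

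The main obstacle, in my view, is the $\zeta_2$-piece at every stage: verifying the valuation identity and continuity of the polar integral on all of $\CVf$, where $u^*$ is typically non-smooth so that $\nabla u^*$ exists only almost everywhere on $\dom u^*$; constructing a test subclass both rich enough to pin down $\zeta_2$ via Theorem~\ref{thm:haberl_parapatits_centro} and flexible enough to be epi-dense in $\CVf$; and deducing the support condition $\zeta_2(t)=0$ for $t\geq T$. This last requirement should emerge from non-negativity and finiteness of $\oZ$: for $u\in\CVf$ whose conjugate has bounded domain, $\nabla u^*(x)\cdot x-u^*(x)$ tends to $+\infty$ as $x$ approaches $\partial\dom u^*$, so $\zeta_2$ must vanish eventually to keep the integral finite.
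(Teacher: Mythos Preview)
Your high-level plan---apply Theorem~\ref{thm:haberl_parapatits_centro} to the cone-function family $\l_K+t$, extract three coefficient functions, and then reconstruct $\zeta_0,\zeta_1,\zeta_2$---is the same as the paper's. But two of the steps you sketch do not work as written.

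\textbf{The support condition on $\zeta_2$.} Your heuristic is that for $u\in\CVf$ with $\dom u^*$ bounded, $\nabla u^*(x)\cdot x-u^*(x)\to+\infty$ as $x\to\partial\dom u^*$, forcing $\zeta_2$ to vanish eventually. This is false. By Lemma~\ref{le:subgr_conj}, $\nabla u^*(x)\cdot x-u^*(x)=u(\nabla u^*(x))$ a.e., and for $u=\l_K+t$ one has $u^*=\Ind_{K^*}-t$, so $\nabla u^*=0$ and the integrand is identically $t$ on $\interior K^*$; nothing blows up. The paper's argument (Lemma~\ref{le:growth_functions_cond}) is of a completely different nature: it builds an explicit family $u_{\delta,\rho}^{b}\in\CVf$ with $u_{\delta,\rho}^{b}\eto\l_{T_\delta}$ as $b\to\infty$, uses the valuation identity and translation invariance to express $\oZ(\l_{T_\delta})-\oZ(u_{\delta,\rho}^{b})$ in terms of $\psi_1(b),\psi_2(b)$ and the explicit quantity $V_n^*(T_\delta\cap\{x_1\le\rho\})$ from Lemma~\ref{le:p_t}, and then chooses $\rho_k=(1+\psi_2(t_k)^{-1})^{-1}$ to force a contradiction if $\psi_2(t_k)>0$ along a sequence $t_k\to\infty$. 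This construction is the genuine content behind the support condition and is absent from your outline.

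\textbf{Recovering $\zeta_1$ from the growth function $\psi_1$.} Testing on cone functions only tells you $\oZ(\l_K+t)=\psi_0(t)+\psi_1(t)V_n(K)+\psi_2(t)V_n^*(K)$. Matching the $V_n$-term against $\int_{\Rn}\zeta_1(\l_K+t)$ gives the integral equation $\psi_1(t)=n\int_0^\infty r^{n-1}\zeta_1(r+t)\,\mathrm{d}r$, whose formal solution is $\zeta_1=\tfrac{(-1)^n}{n!}\psi_1^{(n)}$. Your ``layer-cake over profiles'' does not produce this; more importantly, nothing in your outline shows that $\psi_1\in C^n(\R)$ with $(-1)^n\psi_1^{(n)}\ge 0$, without which $\zeta_1$ need not exist as a non-negative continuous function. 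The paper obtains this through a substantial detour: it introduces the regularization $\reg_\delta$ to push the problem back to $\CV$, applies McMullen's decomposition to $K\mapsto\oY_\delta(\Ind_K+t)$, and then computes $\oY_\delta(\Ind_{[0,\lambda]^n}+t)$ explicitly by an inductive cutting argument (Lemma~\ref{le:diff_lemma}), which simultaneously proves the required differentiability of $\psi_1$ and the sign of $\psi_1^{(n)}$. Finally, the paper does not extend from a test class by epi-density in the way you suggest; it proves a reduction lemma (Lemma~\ref{le:reduction}) saying that any two continuous translation invariant valuations agreeing on all $\l_P+t$, $P\in\Poin$, coincide on $\CVf$, and then simply checks that the candidate formula has growth functions $\psi_0,\psi_1,\psi_2$.
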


Here, $\nabla u$ denotes the \emph{gradient} of a function $u$ defined on $\Rn$. Note, that by Rademacher's theorem (see for example \cite[Theorem 3.1.6]{federer}) a convex function is differentiable almost everywhere on the interior of its domain.\par
\begin{remark}
If (\ref{eq:z_u}) is evaluated for a so-called cone function $\l_K\in\CVf$ with $K\in\Koin$, that is a function which is defined via its \emph{sublevel sets}
$$\{\l_K \leq t \} := \{x\in\Rn\,:\, \l_K(x)\leq t\}=t K,$$
for $t\geq 0$ and $\{\l_K \leq t\}=\emptyset$ for $t<0$, then a linear combination of $V_0(K), V_n(K)$ and $V_n^*(K)$ is obtained.
\end{remark}

\begin{remark}
For a function $u\in\CVf\cap C^2(\R^n)$, the new term in (\ref{eq:z_u}) can be rewritten as
$$\int_{\Rn} \zeta_2(u(x)) \det(\!\D^2u(x)) \d x,$$
where $\D^2 u(x)$ is the \emph{Hessian matrix} of $u$ and $\det(\!\D^2u(x))$ denotes its \emph{determinant}. This is also a special case of the so-called \emph{Hessian valuations} that were introduced in \cite{colesanti_ludwig_mussnig_hess}.
\end{remark}

In addition to the main result, we will study functional analogs of further $\SLn$ invariant valuations in Section~\ref{subse:further_vals}.

\section{Valuations on Convex Bodies}
In this section we consider some basic results about valuations on convex bodies and constructions on polytopes. Let $\Pn\subset \Kn$ denote the set of convex polytopes and let $\Po$ and $\Poin$ denote the subsets of polytopes that contain the origin and polytopes that contain the origin in their interiors, respectively.  For general references, we refer to the books of Gruber \cite{gruber} and Schneider \cite{schneider}.\par
A real-valued valuation $\mu$ defined on some subset $\Qn\subseteq \Kn$ is said to be \emph{homogeneous of degree $i\in\R$} if $\mu(\lambda K) = \lambda^i \mu(K)$ for every $\lambda >0$ and $K\in\Qn$. Furthermore, for $i\in\N$, $\mu$ is said to be \emph{$i$-simple} if $\mu(K)=0$ for every $K\in\Qn$ with $\dim K < i$. For example, the Euler characteristic $V_0$ is homogeneous of degree $0$ and the $n$-dimensional volume $V_n$ is homogeneous of degree $n$ and $n$-simple. Moreover, we have for any $K\in\Koin$ and $\lambda >0$
$$V_n^*(\lambda K) = V_n((\lambda K)^*) = V_n(\lambda^{-1} K^*) = \lambda^{-n} V_n^*(K),$$
which shows that the polar volume is homogeneous of degree $-n$.\par
The next result is due to \cite{mcmullen,mcmullen77} (see also \cite[Corollary 6.3.2., Theorem 6.3.5]{schneider}).

\begin{theorem}[McMullen decomposition]
\label{thm:mcmullen_decomp}
Let $\mu:\Kn\to\R$ be a translation invariant, continuous valuation. There exist continuous, translation invariant valuations $\mu_0,\ldots,\mu_n$ on $\Kn$ such that $\mu_i$ is homogeneous of degree $i$ and $i$-simple for $0\leq i \leq n$ and
$$\mu(\lambda K) = \sum_{i=0}^n \lambda^i \mu_i(K)$$
for every $K\in\Kn$ and $\lambda\geq 0$. In particular, $\mu = \mu_0 + \cdots + \mu_n$.
\end{theorem}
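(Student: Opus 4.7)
The plan is to show first that $\lambda \mapsto \mu(\lambda K)$ is a polynomial in $\lambda \geq 0$ of degree at most $n$ for every $K \in \Kn$, and then to read off the components $\mu_i$ as the coefficients of this polynomial, verifying separately that each $\mu_i$ inherits the required properties. I would begin by reducing to polytopes: since $\Pn$ is dense in $\Kn$ in the Hausdorff metric and $\mu$ is continuous, it suffices to establish the polynomial expansion on $\Pn$ and then extend by continuity.

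The main step is to prove that for every $P \in \Pn$ the function $f_P(\lambda) := \mu(\lambda P)$ is a polynomial of degree at most $n$ in $\lambda \geq 0$. I would proceed by induction on the spatial dimension $n$. In the base case $n = 1$, translation invariance reduces $\mu$ to a continuous function $g(\ell) = \mu([0,\ell])$ of the length; the valuation identity applied to two abutting segments yields $g(a+b) = g(a) + g(b) - g(0)$, so $g$ is affine and $f_P$ is polynomial of degree at most one. For the inductive step I would fix $P$, pick a vertex $v$, and dissect $\lambda P$ by hyperplanes supporting the facets of $\lambda P$ that meet at $\lambda v$. The valuation property (as an inclusion-exclusion) then expresses $\mu(\lambda P)$ as an alternating sum of values of $\mu$ on lower-dimensional faces — to which the induction hypothesis applies after translating each face so it lies in a linear subspace — together with $n$-dimensional cone-like pieces whose polynomiality in $\lambda$ follows by a change of variables combined with translation invariance. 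Summing, one finds that $f_P$ is polynomial of degree at most $n$.

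Once the expansion $\mu(\lambda P) = \sum_{i=0}^n \lambda^i \mu_i(P)$ is available on $\Pn$, choosing any $n+1$ distinct positive numbers $\lambda_0, \ldots, \lambda_n$ and inverting the associated Vandermonde system writes each $\mu_i(P)$ as a fixed linear combination of the continuous translation invariant valuations $P \mapsto \mu(\lambda_j P)$. Consequently $\mu_i$ is itself continuous, translation invariant and a valuation on $\Pn$, and extends uniquely to $\Kn$ by density. Homogeneity of degree $i$ follows from comparing the expansions $\sum_i (ts)^i \mu_i(P) = \mu(tsP) = \sum_i s^i \mu_i(tP)$ in the parameter $s$. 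For $i$-simplicity, if $\dim P = d < i$ then a translation places $P$ inside a $d$-dimensional linear subspace; rerunning the inductive slicing argument within this subspace shows $f_P$ has degree at most $d$, which forces $\mu_i(P) = 0$.

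The main obstacle is the inductive proof of polynomiality in $\lambda$. The combinatorial bookkeeping in the slicing argument has to be done with care so that overlap terms in the inclusion-exclusion telescope correctly and so that each lower-dimensional face can be translated into a configuration to which the induction hypothesis applies. Once this polynomial structure is in hand, the remaining derivations are standard: continuity of the components via Vandermonde, homogeneity via coefficient comparison, simplicity via dimensional reduction, and extension to $\Kn$ by density.
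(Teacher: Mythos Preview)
The paper does not prove this theorem; it is quoted from McMullen's work and Schneider's book as background. Your overall framework --- establish that $\lambda \mapsto \mu(\lambda P)$ is a polynomial of degree at most $n$ on polytopes, extract the homogeneous components via a Vandermonde system, verify their properties, and extend by density --- is the standard route and is sound once polynomiality is in hand.

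The genuine gap is in your inductive step. The hyperplanes supporting the facets of $\lambda P$ that meet at the vertex $\lambda v$ are facet hyperplanes of $\lambda P$; since $\lambda P$ lies entirely on one side of each of them, they do not dissect $\lambda P$ into smaller pieces at all, and no inclusion--exclusion over lower-dimensional faces emerges from this cut. Your subsequent claim that the remaining ``$n$-dimensional cone-like pieces'' have polynomial $\mu$-value ``by a change of variables combined with translation invariance'' is then circular: any such piece is, up to translation, a portion of $\lambda(P-v)$, and establishing polynomiality for it is precisely the problem you are trying to solve. The standard arguments fill this gap differently --- for instance by first reducing to simplices via triangulation and inclusion--exclusion, and then, for a simplex $T$ with a vertex at the origin, using a combinatorial decomposition of $mT$ (for integer $m$) into translates of $T$ and lower-dimensional faces to obtain a recursion, or alternatively by establishing polynomiality of $\lambda\mapsto\mu(K+\lambda L)$ for Minkowski sums. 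Your sketch supplies no such mechanism, so the induction does not close.
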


For the proof of our main result we will need some constructions on polytopes that will be used as level sets of certain convex functions. In the following, let $\{e_1,\ldots,e_n\}$ denote the standard basis of $\Rn$ and let $T^n$ denote the standard simplex in $\Rn$, that is
$$T^n=\conv\{0,e_1,\ldots,e_n\},$$
where $\conv$ denotes the \emph{convex hull}. Furthermore, set $\bar{e}:=(1,\ldots,1)^t$. For $\delta>0$ we set $T_\delta:=(1+2\delta)T^n - \delta \bar{e} \in \Pn$ or equivalently
$$T_\delta = \conv\left\{\begin{pmatrix}
   -\delta\\
   -\delta\\
   \vdots\\
   -\delta
\end{pmatrix}, \begin{pmatrix}
1+\delta\\
-\delta\\
\vdots\\
-\delta
\end{pmatrix}, \cdots, \begin{pmatrix}
-\delta\\
\vdots\\
-\delta\\
1+\delta
\end{pmatrix} \right\}.$$
Note, that if $n=2$ or $n\geq 3$ and $0<\delta<\tfrac{1}{n-2}$, then $T_\delta$ contains the origin in its interior.

\begin{lemma}
\label{le:p_t}
Let $0<\delta<\tfrac{1}{n-2}$ if $n\geq 3$ and $0<\delta < 1$ if $n=2$. For $b>0$, $0<\rho< 1$ and $t\geq b$ let $x_\delta = (1+\delta,-\delta,\cdots,-\delta)^t$ and let $P_{\delta,\rho}^{b,t}:=t T_\delta \cap \{x_1 \leq b(1+\delta)+ \rho(t-b) \}$. It holds that
\begin{eqnarray}
\label{eq:pt_cup}P_{\delta,\rho}^{b,t} \cup \big((t-b)T_\delta + b x_\delta\big) &=& t T_\delta\\
\label{eq:pt_cap}P_{\delta,\rho}^{b,t} \cap \big((t-b)T_\delta + b x_\delta\big) &=& (t-b)(T_\delta\cap \{x_1 \leq \rho \}) + b x_\delta
\end{eqnarray}
for every $t\geq b$. Furthermore
$$V_n^*(T_\delta \cap \{x_1\leq \rho\})= V_n^*(T_\delta)+\frac{1}{n!\delta^{n-2}} \frac{1+\delta}{\delta(1-(n-2)\delta)}\left(\frac{1}{\rho}-\frac{1}{1+\delta}\right).$$
\end{lemma}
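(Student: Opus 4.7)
\medskip
\noindent\textbf{Plan.} I would first record the vertices of $T_\delta$, which are $v_0 = -\delta\bar{e}$ and $v_i = (1+2\delta)e_i - \delta\bar{e}$ for $i = 1, \ldots, n$; here $v_1 = x_\delta$ is the unique vertex with first coordinate $1+\delta$, while the other $n$ vertices all have first coordinate $-\delta$. The central geometric observation is that $(t-b)T_\delta + bx_\delta$ is the image of $tT_\delta$ under the homothety centred at $tv_1$ with ratio $(t-b)/t$; its vertices are the apex $tv_1$ and the $n$ points $(t-b)v_j + bv_1$ for $j \neq 1$, each of which has first coordinate $b(1+2\delta)-t\delta$. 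Since this smaller simplex is contained in $tT_\delta$ (by convexity and $v_1 \in T_\delta$) and lies in $\{x_1 \geq b(1+2\delta)-t\delta\}$, I would identify it as the cap
\[
(t-b)T_\delta + bx_\delta \;=\; tT_\delta \cap \{x_1 \geq b(1+2\delta)-t\delta\}.
\]
The non-trivial inclusion $\supseteq$ is checked by writing $y = \sum_i \lambda_i (tv_i) \in tT_\delta$, using the bound on $y_1$ to deduce $\lambda_1 \geq b/t$, and exhibiting $y - bx_\delta$ explicitly as a convex combination of the $(t-b)v_i$.

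Since $b(1+\delta)+\rho(t-b) - (b(1+2\delta)-t\delta) = (t-b)(\rho+\delta) \geq 0$, the slicing hyperplane $\{x_1 = b(1+\delta)+\rho(t-b)\}$ lies at or above the base of this cap. Then (\ref{eq:pt_cup}) follows from the inclusion $tT_\delta \cap \{x_1 \geq b(1+\delta)+\rho(t-b)\} \subseteq (t-b)T_\delta + bx_\delta$. For (\ref{eq:pt_cap}), the containment $(t-b)T_\delta + bx_\delta \subseteq tT_\delta$ reduces the intersection to $\bigl((t-b)T_\delta + bx_\delta\bigr) \cap \{x_1 \leq b(1+\delta)+\rho(t-b)\}$; parametrising a point of the cap as $y = (t-b)z + bx_\delta$ with $z \in T_\delta$ gives $y_1 = (t-b)z_1 + b(1+\delta)$, so the half-space condition $y_1 \leq b(1+\delta)+\rho(t-b)$ is equivalent to $z_1 \leq \rho$, yielding the desired identity.

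For the polar-volume formula I would first compute the vertices of $T_\delta^*$ by solving the dual systems $w \cdot v_i = 1$ for $i \neq k$, obtaining $w_0 = \bar{e}/(1-(n-2)\delta)$ and $w_k = -e_k/\delta$ for $k = 1, \ldots, n$. The half-space $\{x_1 \leq \rho\}$ has polar segment $[0, e_1/\rho]$, and since $0 \in T_\delta^*$ we get $(T_\delta \cap \{x_1 \leq \rho\})^* = \conv(T_\delta^* \cup \{e_1/\rho\})$. Because $\rho < 1 < 1+\delta$, the point $e_1/\rho$ violates only the facet inequality $y \cdot v_1 \leq 1$, so adjoining it attaches an $n$-simplex $T' := \conv\{e_1/\rho, w_0, w_2, \ldots, w_n\}$ to $T_\delta^*$ along the facet $\conv\{w_0, w_2, \ldots, w_n\}$, which separates $w_1$ from $e_1/\rho$. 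Consequently $V_n^*(T_\delta \cap \{x_1 \leq \rho\}) - V_n^*(T_\delta) = V_n(T')$. The main technical obstacle is then the explicit evaluation of $V_n(T') = \tfrac{1}{n!}\lvert \det M \rvert$, where $M$ has as its columns the vertex differences from $e_1/\rho$; this matrix has arrow shape with lower-right block $-I_{n-1}/\delta$, and a Schur-complement computation combined with the algebraic identity $1 + (n-1)\delta/(1-(n-2)\delta) = (1+\delta)/(1-(n-2)\delta)$ reduces the result to the claimed expression.
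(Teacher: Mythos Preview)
Your argument is correct. For the two set identities, however, your route differs from the paper's. The paper translates everything by $\delta t\bar{e}$, using the relation $sT_\delta+\delta s\bar{e}=s(1+2\delta)T^n$ to reduce the problem to caps of a dilated standard simplex $T^n$, and then appeals to the intercept theorem. You instead work intrinsically in $T_\delta$: by recognising $(t-b)T_\delta+bx_\delta$ as the image of $tT_\delta$ under the homothety with centre $tv_1$ and ratio $(t-b)/t$, and by the barycentric computation $y_1=\lambda_1 t(1+2\delta)-t\delta$, you identify it directly as the cap $tT_\delta\cap\{x_1\ge b(1+2\delta)-t\delta\}$. Your inequality $(t-b)(\rho+\delta)\ge 0$ then makes both \eqref{eq:pt_cup} and \eqref{eq:pt_cap} immediate. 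This is a cleaner, coordinate-free way to see why the two pieces fit together, and it avoids the auxiliary translation; the paper's approach has the mild advantage that once one is in $T^n$ the picture is very familiar. For the polar-volume formula your approach coincides with the paper's---same vertices of $T_\delta^*$, same ``attach a simplex beyond the facet with outer normal $v_1$'' step (your check that $e_1/\rho$ violates only $y\cdot v_1\le 1$ is exactly the beneath--beyond justification the paper leaves implicit)---and only the determinant bookkeeping differs: Schur complement on the arrowhead matrix in your case versus Laplace expansion along a column in the paper. Both yield the same value.
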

\begin{proof}
We will show identities (\ref{eq:pt_cup}) and (\ref{eq:pt_cap}) by adding the vector $\delta t \bar{e}$ first. Note, that by definition
\begin{equation}
\label{eq:t_t_delta}
s T_\delta+\delta s\bar{e} = s c_\delta T^n,
\end{equation}
for any $s>0$, where $c_\delta = (1+2\delta)$. Therefore
$$P_{\delta,\rho}^{b,t} + \delta t \bar{e} = t c_\delta T^n \cap \{x_1 \leq b(1+\delta) + \delta t +\rho (t-b)\}.$$
Furthermore,
\begin{align*}b x_\delta + \delta t \bar{e} &=(b(1+\delta) + \delta t,\delta (t-b),\ldots,\delta(t-b))^t\\
&= \delta (t-b) \bar{e} + b c_\delta  e_1,
\end{align*}
which shows that
\begin{align*}
(t-b)T_\delta + b x_\delta + \delta t \bar{e} &= (t-b) T_\delta + \delta (t-b) \bar{e} + b c_\delta e_1\\
&= (t-b) c_\delta T^n + b c_\delta e_1.
\end{align*}
Hence, using (\ref{eq:t_t_delta}) again, equation (\ref{eq:pt_cup}) is equivalent to
$$\big(t c_\delta T^n \cap \{x_1 \leq b(1+\delta) + \delta t + \rho(t-b) \}\big) \cup \big((t-b) c_\delta  T^n + b c_\delta e_1 \big) = t c_\delta T^n,$$
which follows from the intercept theorem and the fact that
$$b(1+\delta)+\delta t + \rho (t-b) \geq b (1+2\delta) = b c_\delta.$$
Moreover,
\begin{align*}
(t-b)(T_\delta \cap \{x_1 \leq \rho \}) + b x_\delta &+ \delta t \bar{e}\\
&= \big( (t-b)T_\delta \cap \{x_1 \leq \rho (t-b) \}\big) + \delta (t-b) \bar{e} + b c_\delta e_1\\
&=\big((t-b) c_\delta T^n + b c_\delta e_1 \big) \cap \{x_1 \leq b(1+\delta) + \delta t + \rho (t-b) \}.
\end{align*}
This shows that (\ref{eq:pt_cap}) is equivalent to
\begin{align*}
\big(t c_\delta T^n \cap \{x_1 \leq b(1+\delta) + \delta t + &\rho (t-b) \}\big) \cap \big((t-b) c_\delta  T^n + b c_\delta e_1 \big)\\&= \big((t-b) c_\delta T^n + b c_\delta e_1 \big) \cap \{x_1 \leq b(1+\delta) + \delta t + \rho (t-b)\},
\end{align*}
which is easy to see.\par
In order to show the second statement, note that
$$T_\delta = \{x\cdot (1,0,\ldots,0)^t\leq -\delta \}\cap  \ldots \cap \{x\cdot (0,\ldots,0,1)^t\leq -\delta \} \cap \{x\cdot (1,\ldots,1)^t \leq 1-(n-2)\delta\}.$$
Hence,
$$T_\delta^* =
\conv \left\{
\begin{pmatrix} -1/\delta \\ 0 \\ \vdots \\ 0 \end{pmatrix},
\begin{pmatrix} 0 \\ -1/\delta \\ \vdots \\ 0 \end{pmatrix},
\cdots,
\begin{pmatrix} 0 \\ 0 \\ \vdots \\ -1/\delta \end{pmatrix},
\begin{pmatrix} 1/(1-(n-2)\delta) \\ 1/(1-(n-2)\delta) \\ \vdots \\ 1/(1-(n-2)\delta) \end{pmatrix}
\right\}.
$$
Furthermore it is easy to see that $(T_\delta \cap \{x_1 \leq \rho\})^* = \conv \{T_\delta^*, (1/\rho,0,\ldots,0)^t\}$. Hence,
$$V_n^*(T_\delta \cap \{x_1 \leq \rho\}) = V_n^*(T_\delta) + V_n(K_{\delta,\rho})$$
with
$$ K_{\delta,\rho} =
\conv \left\{
\begin{pmatrix} 1/\rho \\ 0 \\ \vdots \\ 0 \end{pmatrix},
\begin{pmatrix} 0 \\ -1/\delta \\ \vdots \\ 0 \end{pmatrix},
\cdots,
\begin{pmatrix} 0 \\ 0 \\ \vdots \\ -1/\delta \end{pmatrix},
\begin{pmatrix} 1/(1-(n-2)\delta) \\ 1/(1-(n-2)\delta) \\ \vdots \\ 1/(1-(n-2)\delta) \end{pmatrix}
\right\}.
$$
We use Laplace's formula (along the last column) to calculate $V_n(K_{\delta,\rho})=V_n(K_{\delta,\rho}+(0,\ldots,0,1/\delta)^t)$ which is given by
$$
V_n(K_{\delta,\rho})=\frac{1}{n!}
\left|\det
\begin{pmatrix}
1/\rho & 0 & \cdots & 0 & 1/(1-(n-2)\delta) \\ 0 & -1/\delta & \cdots & 0 & 1/(1-(n-2)\delta) \\ \vdots & \vdots & \ddots & \vdots & \vdots \\ 0 & 0 & 0 & -1/\delta & 1/(1-(n-2)\delta)\\ 1/\delta & 1/\delta & 1/\delta & 1/\delta & 1/(1-(n-2)\delta)+1/\delta
\end{pmatrix}\right|.
$$
This gives
\begin{align*}
n! V_n(K_{\delta,\rho}) &= \big|(-1)^{n-1} \tfrac{1}{1-(n-2)\delta} (-1)^{n-1} (\tfrac{-1}{\delta})^{n-1}+(-1)^n \tfrac{1}{1-(n-2)\delta} \tfrac{1}{\rho} (-1)^n (\tfrac{-1}{\delta})^{n-2}\\
&\,\,\,\quad + \cdots + (\tfrac{1}{1-(n-2)\delta}+\tfrac{1}{\delta})\tfrac{1}{\rho} (\tfrac{-1}{\delta})^{n-2}\big|\\
&= \big| \tfrac{1}{1-(n-2)\delta}(\tfrac{-1}{\delta})^{n-1} + \tfrac{1}{\rho} (\tfrac{-1}{\delta})^{n-2} (\tfrac{n-1}{1-(n-2)\delta}+\tfrac{1}{\delta}) \big|\\
&=\big|(\tfrac{-1}{\delta})^{n-2} \tfrac{1+\delta}{\delta(1-(n-2)\delta)} (\tfrac{1}{\rho}-\tfrac{1}{1+\delta})\big|\\
&=\tfrac{1}{\delta^{n-2}} \tfrac{1+\delta}{\delta(1-(n-2)\delta)} (\tfrac{1}{\rho}-\tfrac{1}{1+\delta})
\end{align*}
which completes the proof.
\end{proof}

In the following, we write $B^n=\{x\in\Rn\,:\,|x|\leq 1\}$ for the unit ball in $\Rn$, $Q^n=[-1,1]^n$ for the centered standard cube in $\Rn$ and
$$C^n:=\conv\{\pm e_1,\ldots, \pm e_n\}=(Q^n)^*\in\Poin$$
for the corresponding cross-polytope. We will need the following result.

\begin{lemma}
\label{le:vol_conv_poly}
Let $c_i\geq 0$ for $1\leq i\leq n$ and let $K:=\conv\{0,c_1 e_1,\ldots, c_n e_n\}$. For $\delta>0$, we have
$$V_n(\conv(\delta C^n \cup K)) = \frac{1}{n!}\prod_{i=1}^n (\max\{c_i,\delta\}+\delta).$$
\end{lemma}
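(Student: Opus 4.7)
The plan is to identify the set $P := \conv(\delta C^n \cup K)$ as a generalized cross-polytope whose axial vertices can be read off directly from the data $c_1,\ldots,c_n,\delta$, and then compute its volume by decomposing along the coordinate orthants.

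First I would determine the vertex set of $P$. Since the convex hull of a union of convex hulls is the convex hull of the union of the generating points, we have
\[
P = \conv\bigl(\{\pm\delta e_i : 1\le i\le n\} \cup \{0, c_1 e_1,\ldots, c_n e_n\}\bigr).
\]
The point $0$ lies in $\delta C^n$ since $0 = \tfrac{1}{2}(\delta e_i)+\tfrac{1}{2}(-\delta e_i)$, and for each $i$ the points $\delta e_i$ and $c_i e_i$ are collinear with the origin, so whichever is smaller (in norm) lies in the segment joining $0$ to the larger one. Hence the redundant generators can be discarded, leaving the vertices
\[
\{-\delta e_i : 1\le i\le n\}\ \cup\ \{\max\{c_i,\delta\}\, e_i : 1\le i\le n\}.
\]

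Next I would decompose $P$ into $2^n$ simplices according to the signs of the coordinates. For each sign pattern $\sigma=(\sigma_1,\ldots,\sigma_n)\in\{+1,-1\}^n$, the intersection of $P$ with the closed orthant $\{x\in\Rn : \sigma_i x_i\ge 0\text{ for all }i\}$ is the simplex with vertices $0$ and $v_i^\sigma$, where $v_i^\sigma = \max\{c_i,\delta\}\,e_i$ if $\sigma_i=+1$ and $v_i^\sigma=-\delta e_i$ if $\sigma_i=-1$. The volume of this simplex is
\[
\frac{1}{n!}\prod_{i=1}^n a_i^\sigma,\qquad a_i^\sigma:=\begin{cases}\max\{c_i,\delta\} & \sigma_i=+1,\\ \delta & \sigma_i=-1.\end{cases}
\]

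Finally, summing over all $2^n$ orthants and factoring yields
\[
V_n(P)=\sum_{\sigma\in\{\pm 1\}^n}\frac{1}{n!}\prod_{i=1}^n a_i^\sigma = \frac{1}{n!}\prod_{i=1}^n\bigl(\max\{c_i,\delta\}+\delta\bigr),
\]
which is the desired identity. The only real subtlety is the vertex identification (correctly handling the case distinction $c_i\lessgtr\delta$ for each coordinate), but once the generators are reduced the orthant decomposition and the resulting product are immediate.
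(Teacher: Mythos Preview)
Your proof is correct and follows essentially the same approach as the paper: identify $\conv(\delta C^n\cup K)$ as the generalized cross-polytope $\conv\{\max\{c_i,\delta\}e_i,-\delta e_i:1\le i\le n\}$ and then invoke the volume formula $V_n(\conv\{a_ie_i,-b_ie_i\})=\tfrac{1}{n!}\prod_i(a_i+b_i)$. The paper simply asserts both of these facts, while you supply the details of the vertex reduction and derive the volume formula via the orthant decomposition into $2^n$ simplices.
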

\begin{proof}
This follows easily if one considers that for $a_i,b_i\geq 0$, $1\leq i\leq n$ one has
$$V_n(\conv\{a_1 e_1,-b_1 e_1,\ldots, a_n e_n, -b_n e_n \}) = \frac{1}{n!}\prod_{i=1}^n (a_i + b_i),$$
as well as
$$\conv(\delta C^n \cup K) = \conv\{\max\{c_1,\delta\} e_1,-\delta e_1,\ldots, \max\{c_n,\delta\} e_n,-\delta e_n \}.$$
\end{proof}

\section{Convex Functions}
\label{se:convex_functions}
We collect some results on convex functions. For basic references we refer to the books of Rockafellar \& Wets \cite{rockafellar_wets} and Schneider \cite{schneider}. To every convex function $u:\Rn\to( -\infty,+\infty]$ one can assign several convex sets. The (effective) domain of $u$, $\dom u = \{x \in \Rn\,:\,u(x) < +\infty\}$, is a convex subset of $\Rn$ and the \emph{epigraph} of $u$, $\epi u = \{(x, y) \in \Rn\times\R\,:\,u(x) \leq y\}$,
is a convex subset of $\Rn\times\R$. For $t\in \R$, the sublevel set,
$$\{u \leq t\}=\{x\in\Rn\,:\,u(x)\leq t\},$$
is convex. If $u$ is coercive, then its sublevel sets are bounded and if $u$ is lower semicontinuous the sublevel sets are closed. Hence, if $u\in\CV$, the sets $\{u\leq t\}$ are elements of $\Kn$ for every $t\geq \min_{x\in\Rn} u(x)$. In particular, this minimum is attained and finite and the space $\CV$ can be seen as a functional analog of $\Kn$. Note, that for $u,v\in\CV$ and $t\in\R$
$$\{u \mn v \leq t\} = \{u \leq t\} \cup \{v \leq t\}\quad \text{and} \quad \{u \mx v \leq t\} = \{u\leq t\} \cap \{v \leq t\},$$
where for $u \mn v \in \CV$ all occurring sublevel sets are either empty or in $\Kn$.\par
The space $\CV$ is equipped with the topology associated to epi-convergence. A sequence $u_k:\Rn\to (-\infty,+\infty]$ is said to be \emph{epi-convergent} to $u:\Rn\to (-\infty,+\infty]$ if for all $x\in\Rn$ the
following conditions hold:
\begin{itemize}
\item[(i)] For every sequence $x_k$ that converges to $x$, $u(x)\leq \liminf_{k\to+\infty} u_k(x_k)$.
\item[(ii)] There exists a sequence $x_k$ that converges to $x$ such that $u(x)=\lim_{k\to+\infty} u_k(x_k)$.
\end{itemize}
In other words, $u$ is an optimal common asymptotic lower bound of the sequence $u_k$. For epi-convergent sequences $u_k$ with limit function $u$ we also write $u=\elim_{k\to+\infty} u_k$ and $u_k \eto u$.
\begin{remark}
Another name for epi-convergence is $\Gamma$-convergence.
\end{remark}

Epi-convergence is strongly connected to Hausdorff convergence of sublevel sets. In the following result (see \cite[Lemma 5]{colesanti_ludwig_mussnig} and \cite[Theorem 3.1]{beer_rockafellar_wets}) we say that $\{u_k \leq t\} \to \emptyset$ as $k\to+\infty$ if there exists $k_0\in\N$ such that $\{u_k\leq t\}=\emptyset$ for all $k\geq k_0$.
\begin{lemma}
\label{le:hd_conv_lvl_sets}
Let $u_k,u\in\CV$. If $u_k \eto u$, then $\{u_k\leq t\} \to \{u\leq t\}$ as $k\to+\infty$ for every $t\in\R$ with $t\neq \min_{x\in\Rn} u(x)$. Furthermore, if for every $t\in\Rn$ there exists a sequence $t_k \to t$ such that $\{u_k\leq t_k\} \to \{u\leq t\}$, then $u_k \eto u$.
\end{lemma}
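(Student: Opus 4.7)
The plan is to reduce the equivalence to a sublevel-set picture by exploiting the two defining conditions~(i) and~(ii) of epi-convergence together with the convexity and coercivity shared by $u$ and every $u_k$. For the forward direction, fix $t \in \R$ with $t \neq \min_{x\in\Rn} u(x)$. The critical preliminary step, and the one I expect to be the main obstacle, is to show that the sublevel sets $\{u_k \leq t\}$ all lie in a common ball for $k$ large. To get it I fix any $x_0 \in \dom u$, apply~(ii) to obtain $z_k \to x_0$ with $u_k(z_k) \to u(x_0)$, so that $u_k(z_k) \leq M := u(x_0)+1$ eventually, and then argue by contradiction: if there were $x_{k_j} \in \{u_{k_j}\leq t\}$ with $|x_{k_j}|\to\infty$, then for a fixed $R>0$ the interpolants $y_{k_j} := (1-\lambda_{k_j}) z_{k_j} + \lambda_{k_j} x_{k_j}$ with $\lambda_{k_j} := R/|x_{k_j}-z_{k_j}|\to 0$ lie at distance $R$ from $z_{k_j}$ on $[z_{k_j}, x_{k_j}]$, convexity of $u_{k_j}$ gives $u_{k_j}(y_{k_j}) \leq (1-\lambda_{k_j}) M + \lambda_{k_j} t$ and hence $\limsup_j u_{k_j}(y_{k_j}) \leq M$, a further bounded subsequence of $y_{k_j}$ converges to some $y$ with $|y-x_0|=R$, and~(i) forces $u(y)\leq M$; letting $R \to \infty$ contradicts the coercivity of $u$.

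Once uniform boundedness is available, the case $t<\min u$ is immediate: any point of $\{u_k \leq t\}$ would subconverge to some $x$ with $u(x)\leq t$ by~(i), which is impossible, so $\{u_k\leq t\}$ is eventually empty. For $t>\min u$ I would prove Painlev\'e--Kuratowski convergence of sublevel sets and lift it to Hausdorff convergence via the uniform bound. The outer inclusion $\limsup_k \{u_k\leq t\}\subset \{u\leq t\}$ follows directly from~(i); for the reverse inclusion I take $x\in\{u\leq t\}$, pick any $x_0$ with $u(x_0)<t$ (which exists since $t>\min u$), set $x_\e := (1-\e)x + \e x_0$ so that $u(x_\e)<t$ by convexity, apply~(ii) at $x_\e$ to produce $x_k^\e \to x_\e$ with $u_k(x_k^\e)<t$ for $k$ large, and then diagonalize as $\e\to 0^+$ to obtain the desired approximating sequence in $\{u_k\leq t\}$.

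For the reverse direction I verify~(i) and~(ii) separately. To check~(i), given $x_k\to x$, if $u(x)>\liminf_k u_k(x_k)$ I pick $t$ strictly between these two values and invoke the hypothesis to obtain $t_k\to t$ with $\{u_k\leq t_k\}\to \{u\leq t\}$; along a suitable subsequence $u_{k_j}(x_{k_j})<t_{k_j}$, so $x_{k_j}\in\{u_{k_j}\leq t_{k_j}\}$, and Hausdorff convergence forces $x\in\{u\leq t\}$, contradicting $u(x)>t$. For~(ii) with $u(x)<+\infty$ I apply the hypothesis at $t=u(x)+\e$ to obtain $y_k^\e\to x$ with $u_k(y_k^\e)\leq t_k\to u(x)+\e$, combine this with the already-established~(i) to deduce $\liminf_k u_k(y_k^\e)\geq u(x)$, and diagonalize in $\e\to 0^+$. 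When $u(x)=+\infty$, the hypothesis applied at arbitrarily large $t$ forces $x\notin\{u_k\leq t_k\}$ for $k$ large, so the constant sequence $x_k:=x$ satisfies $u_k(x)\geq t_k\to t$ and hence $u_k(x)\to+\infty$, as required.
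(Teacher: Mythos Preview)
The paper does not prove this lemma; it is quoted with references to \cite[Lemma~5]{colesanti_ludwig_mussnig} and \cite[Theorem~3.1]{beer_rockafellar_wets}, so there is no in-paper argument to compare against. Your proposal is a correct self-contained proof along standard lines.

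A few comments on the details. Your interpolation-and-coercivity argument for the uniform bound on $\{u_k\le t\}$ is valid; in the paper's framework the same conclusion is available more quickly via the uniform cone property (Lemma~\ref{le:un_cone}), which gives common constants $a>0$, $b\in\R$ with $u_k(x)>a|x|+b$ for all $k$, so $\{u_k\le t\}\subset\{|x|\le (t-b)/a\}$ directly. Your Painlev\'e--Kuratowski argument for $t>\min u$ is correct, including the use of an interior point $x_0$ with $u(x_0)<t$ to recover boundary points of $\{u\le t\}$; note that you are implicitly using that for compact convex sets contained in a common ball, Kuratowski and Hausdorff convergence coincide. In the converse, your verification of (i) is fine; just be careful that along the chosen subsequence one eventually has $u_{k_j}(x_{k_j})<t_{k_j}$ because the left side tends to $\liminf_k u_k(x_k)<t$ while $t_{k_j}\to t$. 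Finally, in the case $u(x)=+\infty$ you do not actually need the separate Hausdorff argument: once (i) is established, every sequence $x_k\to x$ already satisfies $\liminf_k u_k(x_k)\ge u(x)=+\infty$, so the constant sequence works immediately.
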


Another consequence of epi-convergence is due to \cite[Theorem 7.17]{rockafellar_wets}

\begin{theorem}
\label{thm:g_comp}
Let $u_k,u:\Rn\to(-\infty,+\infty]$ be convex functions. If $u_k$ epi-converges to $u$, then $u$ is convex. Moreover, if $\dom u$ has non-empty interior, then $u_k$ converges uniformly to $u$ on every compact set that does not contain a boundary point of $\dom u$. 
\end{theorem}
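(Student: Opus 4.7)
The plan is to establish the two claims in turn. For convexity of the limit, fix $x,y\in\Rn$ and $\lambda\in(0,1)$. If either $u(x)=+\infty$ or $u(y)=+\infty$, the convexity inequality holds trivially, so I may assume both values are finite. By condition~(ii) I select sequences $x_k\to x$ and $y_k\to y$ with $u_k(x_k)\to u(x)$ and $u_k(y_k)\to u(y)$, whence $\lambda x_k+(1-\lambda)y_k\to \lambda x+(1-\lambda)y$ and, by convexity of $u_k$,
$$u_k\bigl(\lambda x_k+(1-\lambda)y_k\bigr)\leq \lambda u_k(x_k)+(1-\lambda)u_k(y_k).$$
Applying condition~(i) to the left-hand side and passing to the limit on the right then yields the convexity inequality for $u$.

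For the second claim, the first and main task is to establish pointwise convergence $u_k(x_0)\to u(x_0)$ for every $x_0\in\interior(\dom u)$. Condition~(i), applied with the constant sequence $x_k=x_0$, gives $u(x_0)\leq\liminf_{k}u_k(x_0)$. For the matching upper bound, I fix $\e>0$ and choose affinely independent points $y_1,\ldots,y_{n+1}\in\interior(\dom u)$ forming a small simplex $S$ with $x_0\in\interior S$ and such that $\sum_{i=1}^{n+1}\alpha_i u(y_i)<u(x_0)+\e$, where $(\alpha_i)$ are the barycentric coordinates of $x_0$ with respect to $S$; this is possible because convex functions are continuous on the interior of their domain. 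Using~(ii), I pick $y_{i,k}\to y_i$ with $u_k(y_{i,k})\to u(y_i)$. For $k$ large the points $y_{1,k},\ldots,y_{n+1,k}$ remain affinely independent, and the barycentric coordinates $\alpha_{i,k}$ of $x_0$ with respect to them converge to $\alpha_i$. Convexity of $u_k$ then gives $u_k(x_0)\leq\sum_i \alpha_{i,k}\, u_k(y_{i,k})$, and taking $\limsup$ produces $\limsup_k u_k(x_0)\leq u(x_0)+\e$. Letting $\e\to0$ completes the pointwise convergence.

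With pointwise convergence on the open convex set $\interior(\dom u)$ in hand, uniform convergence on every compact subset of $\interior(\dom u)$ follows from the classical fact that convex functions that converge pointwise on an open convex set converge uniformly on compact subsets (a consequence of their local Lipschitz behaviour, which in turn is a consequence of the uniform local boundedness of $u_k$ supplied by the simplex construction above). It remains to show that on a compact set $C\subset\Rn\setminus\cl(\dom u)$ one has $u_k\to+\infty$ uniformly. If not, there would exist $M\in\R$, a subsequence, and points $x_{k_j}\in C$ with $u_{k_j}(x_{k_j})\leq M$; after extracting a further subsequence converging to some $x_*\in C$, condition~(i) would force $u(x_*)\leq M<+\infty$, contradicting $x_*\notin\cl(\dom u)$. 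A general compact set $C$ containing no boundary point of $\dom u$ decomposes into its compact parts in $\interior(\dom u)$ and in $\Rn\setminus\cl(\dom u)$, finishing the proof. The main obstacle is the $\limsup$ inequality on the interior of $\dom u$, whose only non-routine ingredient is the simplex construction together with the continuity of $u$ on $\interior(\dom u)$.
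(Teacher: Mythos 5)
This theorem is quoted from Rockafellar and Wets \cite[Theorem 7.17]{rockafellar_wets} and the paper gives no proof of its own, so there is no in-paper argument to compare against. Your self-contained proof is correct. The recovery-sequence argument for convexity of $u$ is standard and works because the right-hand side $\lambda u(x)+(1-\lambda)u(y)$ is finite, so condition~(i) gives the convexity inequality. The shrinking-simplex construction with barycentric coordinates is a clean way to obtain the $\limsup$ bound at each $x_0\in\interior(\dom u)$: for $k$ large, $x_0$ lies in the interior of the perturbed simplex, so the perturbed barycentric coordinates are positive and Jensen's inequality applies; this combined with the $\liminf$ bound from condition~(i) gives pointwise convergence, which upgrades to uniform convergence on compact subsets of $\interior(\dom u)$ by the classical equi-Lipschitz argument for pointwise convergent convex functions. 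The compactness argument for uniform divergence on compacta disjoint from $\cl(\dom u)$ is also sound, since a bounded subsequence would contradict $u(x_*)=+\infty$ via condition~(i). Finally, the decomposition step works because a compact $C$ disjoint from $\partial(\dom u)$ is covered by the two disjoint open sets $\interior(\dom u)$ and $\Rn\setminus\cl(\dom u)$, so each piece of $C$ is both relatively open and relatively closed in $C$, hence compact.
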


\begin{remark}
By Theorem \ref{thm:g_comp}, epi-convergence on $\CVf$ is equivalent to local uniform convergence and compact convergence. Furthermore, by \cite[Example 5.13]{dal_maso}, epi-convergence on this function space is also equivalent to pointwise convergence.
\end{remark}

A consequence of coerciveness is the so called \emph{cone property}, which was established in \cite[Lemma 2.5]{colesanti_fragala}.

\begin{lemma}
\label{le:cone}
For $u\in\CV$ there exist constants $a,b \in \R$ with $a >0$ such that
\begin{equation*}
u(x)>a|x|+b
\end{equation*}
for every $x\in\Rn$.
\end{lemma}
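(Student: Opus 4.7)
The plan is to extract a linear lower bound from two basic consequences of the hypotheses: the minimum of $u$ is attained (thanks to lower semicontinuity plus coercivity), and convexity then propagates any fixed gain above the minimum linearly along every outgoing ray. First I would observe that since $u\in\CV$, every sublevel set $\{u\le t\}$ is closed (by lower semicontinuity) and bounded (by coercivity), hence compact. Picking $t$ large enough that $\{u\le t\}\neq\emptyset$ and applying the standard existence result for lower semicontinuous functions on compact sets, $u$ attains its minimum $m:=\min_{\Rn}u$ at some point $x_0\in\Rn$.

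Next I would use coercivity in its quantitative form to select $R>0$ with the property that $u(x)\ge m+1$ whenever $|x-x_0|\ge R$. For any $y\in\Rn$ with $|y-x_0|\ge R$, set $v=(y-x_0)/|y-x_0|$, $t=|y-x_0|$, and $z:=x_0+Rv$. Then $z$ is the convex combination $\tfrac{t-R}{t}x_0+\tfrac{R}{t}y$, and convexity gives
$$u(z)\le \tfrac{t-R}{t}\,m+\tfrac{R}{t}\,u(y),$$
which rearranges to
$$u(y)\ge \tfrac{t}{R}\bigl(u(z)-m\bigr)+m \ge \tfrac{|y-x_0|}{R}+m,$$
using $u(z)\ge m+1$. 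The triangle inequality $|y-x_0|\ge |y|-|x_0|$ then produces $u(y)\ge \tfrac{1}{R}|y|+\bigl(m-\tfrac{|x_0|}{R}\bigr)$ outside the ball $B(x_0,R)$.

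On the complementary region $|y-x_0|<R$ I would simply use $u(y)\ge m$ together with $|y|\le |x_0|+R$ to recover the same type of affine lower bound, after possibly decreasing the additive constant. Choosing $a:=1/R$ and $b$ strictly smaller than $m-|x_0|/R-1$ absorbs both regimes and upgrades the non-strict inequalities to the strict inequality demanded by the statement. The argument is essentially routine; the only substantive step is the convex-combination inequality, and there is no real obstacle, since coercivity provides the single point at which $u$ exceeds $m$ by a definite amount, and convexity then forces that gain to grow at least linearly along the ray through $y$.
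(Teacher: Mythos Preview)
Your argument is correct. The minimum exists for the reason you give, the convex-combination step is sound (with $|z-x_0|=R$ so that $u(z)\ge m+1$ applies), and the final patching of the two regimes together with a strict choice of $b$ yields the desired strict inequality; the case $u(y)=+\infty$ causes no trouble since the claimed bound is then trivial.

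As for comparison: the paper does not actually prove this lemma but merely quotes it from \cite[Lemma~2.5]{colesanti_fragala}, so there is no in-paper argument to compare against. Your proof is the standard elementary one and is entirely self-contained, which is arguably preferable here to an external citation.
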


Furthermore, a \emph{uniform cone property} was established in \cite[Lemma 8]{colesanti_ludwig_mussnig}.

\begin{lemma}
\label{le:un_cone}
For $u_k, u\in\CV$ such that $u_k \eto u$, there exist constants $a,b\in\R$ with $a>0$ such that
\begin{equation*}
u_k(x)>a|x|+b\quad \text{and}\quad u(x)>a|x|+b
\end{equation*}
for every $k\in\N$ and $x\in\Rn$.
\end{lemma}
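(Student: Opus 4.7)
The plan is to first produce a single cone bound valid for all sufficiently large $k$, and then combine it with the cone bound for the limit $u$ provided by Lemma~\ref{le:cone} together with the individual cone bounds of the finitely many remaining $u_k$, decreasing the slope and the intercept so that one pair $(a,b)$ dominates all of them simultaneously. The crux is the uniform step.

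To carry this out, I would fix two levels $s<t$ with $s>\min_{x\in\Rn}u(x)$. Since $u\in\CV$, the sublevel sets $\{u\leq s\}$ and $\{u\leq t\}$ are nonempty compact convex sets, and by Lemma~\ref{le:hd_conv_lvl_sets} the sequences $\{u_k\leq s\}$ and $\{u_k\leq t\}$ converge to them in the Hausdorff metric. Hence there exist $k_0\in\N$ and radii $0<r<R$ with
$$\emptyset\neq\{u_k\leq s\}\subseteq r B^n,\qquad \{u_k\leq t\}\subseteq R B^n,\qquad k\geq k_0.$$
Applying the same lemma at some level $\tau<\min u$ gives $\{u_k\leq \tau\}=\emptyset$ for all large $k$, so $u_k\geq \tau$ on all of $\Rn$ from some index on.

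The key step is a one-variable convexity estimate. For $k\geq k_0$ and $x$ with $|x|>R$, pick $y_k\in\{u_k\leq s\}\subseteq r B^n$ and consider the segment $z(\lambda)=(1-\lambda)y_k+\lambda x$. The reverse triangle inequality shows that $\lambda^\ast:=(R+r)/(|x|+r)\in(0,1)$ satisfies $|z(\lambda^\ast)|\geq R$, and after a small outward perturbation one may assume $|z(\lambda^\ast)|>R$ and hence $u_k(z(\lambda^\ast))>t$. Combining with $u_k(y_k)\leq s$ and convexity of $u_k$ on the segment yields
$$u_k(x)\;\geq\;\frac{u_k(z(\lambda^\ast))-(1-\lambda^\ast)u_k(y_k)}{\lambda^\ast}\;>\;s+\frac{t-s}{\lambda^\ast}\;=\;s+\frac{(t-s)(|x|+r)}{R+r},$$
a strict linear lower bound with slope $(t-s)/(R+r)>0$ depending only on $s,t,r,R$. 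For $|x|\leq R$ the uniform estimate $u_k\geq \tau$ takes over, and lowering the intercept sufficiently produces a single strict cone inequality $u_k(x)>a'|x|+b'$ valid for all $x\in\Rn$ and all $k\geq k_0$.

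To finish I would apply Lemma~\ref{le:cone} to $u$ and to each of the finitely many remaining functions $u_1,\dots,u_{k_0-1}$, obtain their individual cone constants, and then set $a>0$ to be the smallest of all slopes thus collected and $b$ strictly below all the resulting intercepts; a straightforward check shows that the resulting $(a,b)$ works for $u$ and every $u_k$. The main obstacle is the convexity estimate of the preceding paragraph; the rest is bookkeeping. Care is needed because Lemma~\ref{le:hd_conv_lvl_sets} applies only at levels different from $\min u$, so $s,t,\tau$ must avoid that critical value, and strict inequality has to be preserved at each passage.
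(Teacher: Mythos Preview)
Your proof is correct. The paper does not prove this lemma itself but merely cites it from \cite{colesanti_ludwig_mussnig}, so there is no in-paper argument to compare against. Your strategy---using Lemma~\ref{le:hd_conv_lvl_sets} to trap two sublevel sets uniformly in nested balls, extracting a linear lower bound via one-dimensional convexity along segments, and then handling the finitely many initial terms and the limit separately via Lemma~\ref{le:cone}---is the standard route to the uniform cone property.

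Two small points to tighten. First, at $\lambda^\ast=(R+r)/(|x|+r)$ the reverse triangle inequality only gives $|z(\lambda^\ast)|\geq R$, not strict, so $u_k(z(\lambda^\ast))>t$ is not immediate; the clean fix is to take any $\lambda\in(\lambda^\ast,1)$ (possible since $|x|>R$), obtain the strict inequality there, and then let $\lambda\downarrow\lambda^\ast$ to get $u_k(x)\geq s+(t-s)(|x|+r)/(R+r)$, restoring strictness at the very end by lowering $b$. Second, the nonemptiness of $\{u_k\leq s\}$ for large $k$---which you need in order to choose $y_k$---follows because $\{u_k\leq s\}$ Hausdorff-converges to the nonempty compact set $\{u\leq s\}$; this is implicit in your argument but worth making explicit.
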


Recall, that for a convex function $u:\Rn\to[-\infty,+\infty]$, the convex conjugate $u^*$ is defined by
$$u^*(x)=\sup\nolimits_{y\in\Rn} \big(x\cdot y - u(y) \big),$$
for every $x\in\Rn$. The next result can be found in \cite[Theorem 1.6.13]{schneider}.
\begin{lemma}
\label{le:u**}
If $u:\Rn\to(-\infty,+\infty]$ is a proper, lower semicontinuous, convex function, then also $u^*$ is a proper, lower semicontinuous, convex function and $u^{**} = u$.
\end{lemma}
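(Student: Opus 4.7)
The plan is to establish the four assertions of the lemma in order: convexity, lower semicontinuity, and properness of $u^*$, followed by the duality $u^{**}=u$. For convexity and lower semicontinuity, I would observe that the definition writes $u^*$ as the pointwise supremum of the affine (hence convex and continuous) functions $x\mapsto x\cdot y-u(y)$ indexed by $y\in\dom u$; a pointwise supremum of convex, lower semicontinuous functions is again convex and lower semicontinuous, so nothing further is needed. To see $u^*>-\infty$, I would fix any $y_0\in\dom u$ (which exists since $u$ is proper) and note $u^*(x)\geq x\cdot y_0-u(y_0)>-\infty$.

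The first nontrivial step is showing $u^*\not\equiv+\infty$, which amounts to producing an affine minorant of $u$. Since $\epi u$ is a nonempty, closed, convex, proper subset of $\Rn\times\R$, for $y_0\in\dom u$ the point $(y_0,u(y_0)-1)$ lies outside $\epi u$, so strict separation yields a nonzero $(p,q)\in\Rn\times\R$ and $c\in\R$ with $p\cdot y+qt\geq c$ on $\epi u$ and $p\cdot y_0+q(u(y_0)-1)<c$. Letting $t\to+\infty$ along $(y_0,t)\in\epi u$ forces $q\leq 0$, and the choice $y_0\in\dom u$ rules out $q=0$, so $q<0$. Dividing by $-q$ yields an affine $\ell(y)=r\cdot y+s$ with $\ell\leq u$, and hence $u^*(r)\leq-s<+\infty$.

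The inequality $u^{**}\leq u$ is then immediate from the Young-type inequality $x\cdot y\leq u(y)+u^*(x)$, obtained by rearranging the definition of $u^*$. For the reverse direction, I would argue by contradiction: if $u^{**}(y_0)<u(y_0)$, pick $\alpha$ strictly between, note $(y_0,\alpha)\notin\epi u$, and strictly separate to find nonzero $(p,q)$ and $c\in\R$ with $p\cdot y+qt\geq c$ on $\epi u$ and $p\cdot y_0+q\alpha<c$; once more $q\geq 0$. If $q>0$, rescaling gives $u^*(-p/q)\leq-c/q$, so that $u^{**}(y_0)\geq(c-p\cdot y_0)/q>\alpha$, a contradiction.

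The main obstacle is the degenerate case $q=0$, which is exactly the situation where $y_0\notin\dom u$ and the separating hyperplane is vertical, carrying no direct information about $u^*$. To unlock it, I would combine the degenerate separation $p\cdot y\geq c>p\cdot y_0$ with the affine minorant $\ell(y)=a\cdot y+b\leq u(y)$ constructed above: for each $\lambda\geq 0$ the function $f_\lambda(y):=\ell(y)-\lambda(p\cdot y-c)=(a-\lambda p)\cdot y+(b+\lambda c)$ still satisfies $f_\lambda\leq u$, since $p\cdot y-c\geq 0$ on $\dom u$ while $u\equiv+\infty$ elsewhere. This yields $u^*(a-\lambda p)\leq-b-\lambda c$ and therefore $u^{**}(y_0)\geq\ell(y_0)+\lambda(c-p\cdot y_0)$; since $c-p\cdot y_0>0$, letting $\lambda\to+\infty$ forces $u^{**}(y_0)=+\infty$, again contradicting $u^{**}(y_0)<\alpha$ and closing the proof.
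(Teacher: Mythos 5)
The paper offers no proof of this lemma, merely citing Schneider's book (Theorem~1.6.13), so your self-contained argument already goes beyond what the paper records. What you have written is the classical Fenchel--Moreau proof: $u^*$ is convex and lower semicontinuous as a pointwise supremum of affine functions; $u^*>-\infty$ everywhere follows by picking any $y_0\in\dom u$; an affine minorant of $u$, produced by separating a point strictly below the epigraph from $\epi u$, shows $u^*\not\equiv+\infty$; the Fenchel--Young inequality $x\cdot y\le u(y)+u^*(x)$ gives $u^{**}\le u$; and a second separation argument, with the degenerate vertical-hyperplane case $q=0$ handled by subtracting large multiples of the nonnegative slack $p\cdot y-c$ from the previously constructed affine minorant, gives $u^{**}\ge u$. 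All essential ideas are present and correctly deployed, in particular the treatment of the $q=0$ case, which is the only genuinely delicate step and is exactly right.

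There is one sign slip to correct. With the convention $p\cdot y+qt\ge c$ on $\epi u$ and $p\cdot y_0+q(u(y_0)-1)<c$, letting $t\to+\infty$ along $(y_0,t)\in\epi u$ forces $q\ge 0$, not $q\le 0$: if $q<0$ the left-hand side would tend to $-\infty$. Since $y_0\in\dom u$ rules out $q=0$, one gets $q>0$, and the affine minorant comes from dividing by $q$ (not $-q$), namely $u(y)\ge(c-p\cdot y)/q$ for all $y$. In the second separation you state the sign correctly ($q\ge 0$), so the slip is confined to the first. The error is purely clerical and does not affect the architecture of the argument, which matches the standard biconjugation proof.
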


In the following we say that a function $u:\Rn\to (-\infty,+\infty]$ is \emph{super-coercive} if
$$\lim_{|x|\to+\infty} \frac{u(x)}{|x|} = +\infty.$$
Furthermore, let $\interior A$ denote the \emph{interior} of a set $A\subset\Rn$. As the next result shows, certain properties of a convex function correspond to certain other properties of its conjugate function, see for example \cite[Theorem 11.8]{rockafellar_wets}. 
\begin{lemma}
\label{le:props_of_u*}
For a proper, lower semicontinuous, convex function $u:\Rn\to(-\infty,+\infty]$, the following hold true:
\begin{itemize}
	\item $u$ is coercive if and only if $0\in \interior \dom u^*$.
	\item $u$ is super-coercive if and only if $\dom u^* = \Rn$.
\end{itemize}
\end{lemma}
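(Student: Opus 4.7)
The plan is to prove both equivalences directly from the definition of $u^*$, using in each case that linear growth from below of $u$ is dual to finiteness of $u^*$ near $0$, while super-linear growth is dual to global finiteness of $u^*$. Throughout, I would freely use that a proper, lower semicontinuous, convex function is locally bounded on the interior of its domain.

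For the first equivalence, to prove the ``only if'' direction I would first upgrade coerciveness to a linear lower bound. Since $u$ is proper, lsc, and coercive, the cone-property argument (essentially Lemma~\ref{le:cone}; the proof there only uses coerciveness, properness and lower semicontinuity) produces $a>0$ and $b\in\R$ with $u(x)>a|x|+b$ for all $x\in\Rn$. Then for any $y\in\Rn$ with $|y|<a$ and any $x\in\Rn$,
\[
x\cdot y - u(x)\le |x|\,|y| - a|x| - b \le -b,
\]
so $u^*(y)\le -b<+\infty$ and the open ball of radius $a$ around $0$ lies in $\dom u^*$, giving $0\in\interior\dom u^*$. For the converse, if $0\in\interior\dom u^*$ pick $r>0$ so that $rB^n\subset\dom u^*$; the convex function $u^*$ is bounded above by some $M$ on $rB^n$. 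Then for every $x\in\Rn$ and every $y\in rB^n$, $x\cdot y\le u(x)+u^*(y)\le u(x)+M$, so taking the supremum over $|y|\le r$ yields $u(x)\ge r|x|-M$, which forces $u(x)\to+\infty$ as $|x|\to+\infty$.

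For the second equivalence, assume first that $u$ is super-coercive. Fix $y\in\Rn$; I want to bound $\sup_x\bigl(x\cdot y-u(x)\bigr)$. Since $u(x)/|x|\to+\infty$, there is $R>0$ with $u(x)\ge (|y|+1)|x|$ for $|x|>R$, giving $x\cdot y-u(x)\le -|x|\le 0$ on $\{|x|>R\}$. On the compact ball $\{|x|\le R\}$ the continuous (on the interior of $\dom u$; lsc and locally bounded in general, with attention to the boundary) function $x\mapsto x\cdot y-u(x)$ is bounded above, so $u^*(y)<+\infty$ and $\dom u^*=\Rn$. Conversely, assume $\dom u^*=\Rn$ but, toward a contradiction, that $u$ is not super-coercive. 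Then there exist $M>0$ and a sequence $x_k$ with $|x_k|\to+\infty$ and $u(x_k)\le M|x_k|$; passing to a subsequence, $x_k/|x_k|\to v$ for some unit vector $v\in\Rn$. Setting $y:=2Mv$ and using $(x_k/|x_k|)\cdot v\to 1$, I obtain
\[
x_k\cdot y - u(x_k)\ge 2M|x_k|\bigl((x_k/|x_k|)\cdot v\bigr)-M|x_k|\longrightarrow+\infty,
\]
so $u^*(y)=+\infty$, contradicting $y\in\dom u^*$.

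The only mildly delicate points are, first, justifying the linear lower bound from coerciveness (this is essentially the content of Lemma~\ref{le:cone} but I would note explicitly that its proof does not require $u\in\CV$ beyond the hypotheses of the present lemma), and second, the local boundedness of $u^*$ on an interior point of its domain, which is the standard fact that a proper convex function is bounded above on any compact subset of $\interior\dom u^*$. Once those are in hand the four implications are essentially Young's inequality $x\cdot y\le u(x)+u^*(y)$ applied in the two directions, together with the support-function-type identity $\sup_{|y|\le r}x\cdot y=r|x|$.
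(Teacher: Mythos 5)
The paper does not prove this lemma at all; it is imported wholesale from Rockafellar and Wets \cite[Theorem 11.8]{rockafellar_wets}, so there is no in-paper proof to compare against. Your argument is a correct, self-contained proof. The structure is the natural one: Young's inequality $x\cdot y\le u(x)+u^*(y)$ read in both directions, with Lemma~\ref{le:cone} supplying the linear lower bound $u(x)>a|x|+b$ (which does apply here with no further justification needed, since a proper, lsc, convex, coercive function is by definition an element of $\CV$) and local upper boundedness of the convex function $u^*$ on a compact neighborhood of $0$ supplying the converse. The super-coercive equivalence then goes through in the same way with a variable slope, plus a compactness argument on the unit sphere for the contrapositive. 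One place where the exposition could be tightened: for boundedness of $x\mapsto x\cdot y-u(x)$ on $\{|x|\le R\}$, rather than the hedge about continuity on $\interior\dom u$ and ``attention to the boundary,'' simply observe that this map is upper semicontinuous (since $u$ is lower semicontinuous), hence bounded above on any compact set; alternatively, invoke the standard affine minorant of a proper lsc convex function. Otherwise everything checks out.
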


Convex conjugation is also compatible with the valuation property, see for example \cite[Lemma 3.4, Proposition 3.5]{colesanti_ludwig_mussnig_hess} 

\begin{lemma}
\label{le:conjugate_is_a_val}
Let $u,v$ be proper, lower semicontinuous, convex functions. If $u\mn v$ is convex, then so is $u^*\mn v^*$. Furthermore,
$$(u\mn v)^* = u^* \mx v^*\qquad \text{and}\qquad (u\mx v)^* = u^* \mn v^*.$$
\end{lemma}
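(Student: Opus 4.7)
The plan is to reduce the whole statement to the convexity of $u^* \mn v^*$ and then prove that convexity by a single geometric argument on epigraphs. For arbitrary $u,v: \Rn \to (-\infty, +\infty]$, the identity $(u \mn v)^* = u^* \mx v^*$ is immediate from the definition, since $-\min(a,b)=\max(-a,-b)$ and the supremum of a finite maximum is the maximum of the suprema; this requires no convexity hypothesis and proves the first displayed identity. Applying it with $u,v$ replaced by $u^*,v^*$, and using $u^{**}=u$, $v^{**}=v$ from Lemma~\ref{le:u**}, yields $(u^* \mn v^*)^* = u \mx v$. Conjugating once more gives $(u \mx v)^* = (u^* \mn v^*)^{**}$, and by Lemma~\ref{le:u**} the right-hand side equals $u^* \mn v^*$ provided $u^* \mn v^*$ is proper, lower semicontinuous, and convex. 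Properness and lower semicontinuity are immediate (the former since $u^*, v^*$ are proper, the latter since the pointwise minimum of two lower semicontinuous functions is lower semicontinuous), so the whole lemma reduces to the convexity of $u^* \mn v^*$.

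For that convexity I pass to epigraphs: convexity of $u \mn v$ is equivalent to $\epi u \cup \epi v$ being convex (as $\epi(u \mn v) = \epi u \cup \epi v$), and I want to deduce that $\epi u^* \cup \epi v^*$ is convex. I argue by contradiction: suppose $(y_0,\alpha_0)\in \epi u^*$, $(y_1,\alpha_1)\in \epi v^*$ and $\lambda\in(0,1)$ give a convex combination $(y_\lambda,\alpha_\lambda) \notin \epi u^* \cup \epi v^*$. Then there exist witnesses $x_u, x_v$ (automatically with $u(x_u), v(x_v)$ finite) satisfying $u(x_u)<y_\lambda\cdot x_u-\alpha_\lambda$ and $v(x_v)<y_\lambda\cdot x_v-\alpha_\lambda$. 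By convexity of $\epi u \cup \epi v$, for every $\mu\in[0,1]$ the interpolant $(x_\mu,s_\mu)$ with $x_\mu=(1-\mu)x_u+\mu x_v$ and $s_\mu=(1-\mu)u(x_u)+\mu v(x_v)$ lies in $\epi u$ or in $\epi v$. A short dichotomy on this alternative, together with the convex-combination identity
\[
y_\lambda\cdot x_\mu-\alpha_\lambda = (1-\lambda)\bigl(y_0 \cdot x_\mu - \alpha_0\bigr) + \lambda\bigl(y_1\cdot x_\mu - \alpha_1\bigr),
\]
the strict witness inequalities, and the universal lower bounds $u(z) \ge y_0 \cdot z - \alpha_0$, $v(z) \ge y_1 \cdot z - \alpha_1$ implied by $(y_0,\alpha_0)\in \epi u^*$ and $(y_1,\alpha_1)\in \epi v^*$, forces the two affine functions $f(\mu) := y_0\cdot x_\mu-\alpha_0$ and $g(\mu) := y_1\cdot x_\mu-\alpha_1$ to satisfy $f(\mu)\ne g(\mu)$ for every $\mu\in[0,1]$; however, the endpoint witnesses also give $f(0)<g(0)$ and $f(1)>g(1)$, contradicting the intermediate value theorem.

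The main obstacle is precisely this convexity step, where the hypothesis on $u\mn v$ actually enters; everything else is formal manipulation of conjugates via the involution $u^{**}=u$.
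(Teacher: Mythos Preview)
Your proof is correct. The reduction via the biconjugation theorem (Lemma~\ref{le:u**}) is clean: the identity $(u\mn v)^*=u^*\mx v^*$ is indeed purely formal, and once you know $u^*\mn v^*$ is convex (together with the easy properness and lower semicontinuity), the second identity follows from $(u^*\mn v^*)^{**}=u^*\mn v^*$ applied to $(u^*\mn v^*)^*=u\mx v$. Your contradiction argument for the convexity of $\epi u^*\cup\epi v^*$ is also sound: the key chain is that if $(x_\mu,s_\mu)\in\epi u$ then $f(\mu)\le u(x_\mu)\le s_\mu<(1-\lambda)f(\mu)+\lambda g(\mu)$, forcing $f(\mu)<g(\mu)$, and symmetrically if $(x_\mu,s_\mu)\in\epi v$; the endpoints $\mu=0,1$ are unambiguously in $\epi u$ and $\epi v$ respectively, giving the required sign change of the affine function $f-g$.

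As for comparison with the paper: the paper does not prove this lemma at all but simply quotes it from \cite[Lemma~3.4, Proposition~3.5]{colesanti_ludwig_mussnig_hess}. Your argument is therefore a genuine, self-contained addition. The epigraph/intermediate-value argument you give for the convexity of $u^*\mn v^*$ is elementary and transparent; an alternative route sometimes taken is to observe that $u^*\mn v^*$ is the closed convex hull of $\min(u^*,v^*)$ via infimal-convolution duality, but your direct approach avoids that machinery entirely.
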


The next result, which is due to Wijsman, shows that convex conjugation is a continuous operation (see \cite[Theorem 11.34]{rockafellar_wets}).

\begin{theorem}
\label{th:wijsman}
If $u_k,u:\Rn\to(-\infty,+\infty]$ are closed, proper and convex, then
\begin{equation*}
u_k \eto u \qquad \text{if and only if} \qquad u_k^*\eto u^*.
\end{equation*}
\end{theorem}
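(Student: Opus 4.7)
My strategy is to prove only the implication $u_k \eto u \Rightarrow u_k^* \eto u^*$; the reverse implication follows by applying it to the sequence $u_k^*$ with limit $u^*$ and invoking Lemma~\ref{le:u**} to identify $u_k^{**}=u_k$ and $u^{**}=u$. To establish $u_k^* \eto u^*$ I verify the two defining conditions of epi-convergence for $(u_k^*)$ at each point $y\in\Rn$.

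For the liminf condition (i), fix any $y_k\to y$. For every $x\in\dom u$, the recovery condition applied to $u_k\eto u$ yields $x_k\to x$ with $u_k(x_k)\to u(x)$, so
\begin{equation*}
u_k^*(y_k)\ge y_k\cdot x_k - u_k(x_k) \longrightarrow y\cdot x - u(x).
\end{equation*}
Passing to the liminf in $k$ and then to the supremum over $x\in\dom u$ yields $\liminf_k u_k^*(y_k)\ge u^*(y)$; the case $u^*(y)=+\infty$ is handled by letting $y\cdot x - u(x)\to+\infty$ along a suitable sequence in $\dom u$.

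For the recovery condition (ii) at $y$ with $u^*(y)<+\infty$ (the case $u^*(y)=+\infty$ follows from (i) with $y_k=y$), I would use the duality of subgradients provided by $u^{**}=u$: for $y\in\interior\dom u^*$ there exists $x_0$ with $y\in\partial u(x_0)$, whence $u^*(y) = y\cdot x_0 - u(x_0)$. Pick $x_k\to x_0$ with $u_k(x_k)\to u(x_0)$ via the recovery condition for $u_k$, and extract subgradients $y_k\in\partial u_k(x_k)$; arguing that they must satisfy $y_k\to y$ (the epi-convergence of $u_k$ to $u$ controls affine supports locally around $x_0$) yields
\begin{equation*}
u_k^*(y_k) = y_k\cdot x_k - u_k(x_k) \longrightarrow y\cdot x_0 - u(x_0) = u^*(y).
\end{equation*}

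The main obstacle is twofold: showing that the subgradients $y_k\in\partial u_k(x_k)$ can be chosen with $y_k\to y$---a ``graph-convergence'' statement for subdifferentials under epi-convergence---and handling the boundary case $y\in\partial(\dom u^*)$ where $y$ need not belong to $\partial u(x_0)$ for any $x_0$, which requires passing to $\varepsilon$-subgradients followed by a diagonal extraction. A cleaner alternative sidesteps both issues: identify $u_k \eto u$ with Painlev\'e--Kuratowski convergence $\epi u_k \to \epi u$ of the closed convex epigraphs in $\Rn \times \R$, and observe that the Fenchel polar
\begin{equation*}
\epi u^* = \{(y,t)\in\Rn\times\R : y\cdot x - s \le t\text{ for all }(x,s)\in\epi u\}
\end{equation*}
is a continuous involution on the epigraphs of proper lsc convex functions with respect to Painlev\'e--Kuratowski convergence, which transfers both directions in one stroke.
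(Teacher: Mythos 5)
The paper does not prove this theorem; it simply cites \cite[Theorem 11.34]{rockafellar_wets}, so there is no in-paper proof to compare against. On its own merits, your liminf argument for condition (i) and the reduction of the converse implication to the forward one via $u^{**}=u$ are both correct. The gap is in the recovery condition (ii). The claim that one can choose subgradients $y_k\in\partial u_k(x_k)$ with $y_k\to y$ is essentially Attouch's theorem on graph convergence of subdifferentials, which is itself a non-trivial result typically derived \emph{from} Wijsman-type duality (or from Moreau-envelope estimates), so invoking it here is circular unless you supply an independent proof. Nor is the boundary case $y\in\partial(\dom u^*)$ a peripheral technicality in $\Rn$, since $\dom u^*$ may be lower-dimensional; the $\varepsilon$-subgradient diagonal argument you gesture at would need to be carried out in full.

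The ``cleaner alternative'' is not an alternative: the assertion that Fenchel polarity of epigraphs is continuous with respect to Painlev\'e--Kuratowski convergence of closed convex sets \emph{is} Wijsman's theorem, merely restated in set-convergence language, and you do not prove that continuity. A genuinely self-contained route goes through the Moreau envelope $e_\lambda u(x)=\inf_{w\in\Rn}\bigl(u(w)+\tfrac{1}{2\lambda}|x-w|^2\bigr)$: for proper, lower semicontinuous, convex functions on $\Rn$, epi-convergence $u_k\eto u$ is equivalent to pointwise convergence $e_\lambda u_k\to e_\lambda u$ for one (hence every) $\lambda>0$, and the Moreau identity $e_\lambda u(x)+e_{1/\lambda}u^*(x/\lambda)=\tfrac{1}{2\lambda}|x|^2$ then transfers pointwise convergence of envelopes directly from $u_k$ to $u_k^*$, closing the argument with no subgradient-convergence claim needed. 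This is the approach taken in \cite{rockafellar_wets}.
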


For a convex, lower semicontinuous function $u:\Rn\to(-\infty,+\infty]$ and $x\in\Rn$, a vector $y\in\Rn$ is said to be a \emph{subgradient of $u$ at $x$} if
$$u(z)\geq u(x) + (z-x)\cdot y,$$
for all $z\in\Rn$. The (possibly empty) set of all subgradients at $x$ is called the \emph{subdifferential of $u$ at $x$} and denoted by $\partial u(x)$. In particular, if $u$ is differentiable at $x$, then $\partial u(x) = \{\nabla u(x)\}$. The next result uses subdifferentials to establish a connection between a convex function and its conjugate (see, for example, \cite[Theorem 23.5]{rockafellar}).

\begin{lemma}
\label{le:subgr_conj}
For a proper, lower semicontinuous, convex function $u:\Rn\to(-\infty,+\infty]$ and $x,y\in\Rn$, the following are equivalent:
\begin{itemize}
	\item $y\in\partial u(x)$,
	\item $x\in\partial u^*(y)$,
	\item $x\cdot y = u(x) +u^*(y)$,
	\item $x\in\argmax_{z\in\Rn} (y\cdot z - u(z))$,
	\item $y\in\argmax_{z\in\Rn} (x\cdot z - u^*(z))$.
\end{itemize}
\end{lemma}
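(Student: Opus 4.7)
The plan is to take the Fenchel--Young identity in the third item, $x\cdot y = u(x)+u^*(y)$, as the hub and show that each of the other four conditions is equivalent to it. The two substantive links are (1) $\Leftrightarrow$ (3) and (3) $\Leftrightarrow$ (4); once those are in hand, applying the same two arguments to $u^*$ (which, by Lemma \ref{le:u**}, is again proper, lower semicontinuous and convex, with $u^{**}=u$) will yield (2) $\Leftrightarrow$ (3) and (5) $\Leftrightarrow$ (3), exploiting the fact that the identity (3) is symmetric in the roles of $(u,x)$ and $(u^*,y)$.

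For (1) $\Leftrightarrow$ (3), I would simply unwind the definition of the subdifferential. The condition $y\in\partial u(x)$ says that $u(z)\geq u(x)+(z-x)\cdot y$ for every $z\in\Rn$, which rearranges to $z\cdot y - u(z)\leq x\cdot y - u(x)$ for every $z$. Taking the supremum over $z$ on the left-hand side gives $u^*(y)\leq x\cdot y - u(x)$, while the reverse inequality $u^*(y)\geq x\cdot y - u(x)$ is the usual Fenchel--Young inequality and is immediate from the definition of $u^*$. Hence (1) forces equality, i.e.\ (3). Conversely, if (3) holds then for every $z\in\Rn$,
\[
z\cdot y - u(z) \leq u^*(y) = x\cdot y - u(x),
\]
which is exactly the subgradient inequality defining $y\in\partial u(x)$.

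The link (3) $\Leftrightarrow$ (4) is essentially a tautology: by the very definition of $u^*$ as the supremum $\sup_{z\in\Rn}(y\cdot z - u(z))$, the point $x$ is a maximizer of $z\mapsto y\cdot z - u(z)$ if and only if $y\cdot x - u(x) = u^*(y)$, which is (3).

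Finally, to obtain (2) and (5) I apply the two equivalences just proved to the function $u^*$ at the point $y$. This is legitimate because Lemma \ref{le:u**} ensures that $u^*$ lies in the same class of functions and satisfies $u^{**}=u$. The subgradient version yields
\[
x\in\partial u^*(y) \iff y\cdot x = u^*(y)+u^{**}(x) = u^*(y)+u(x),
\]
which is precisely (3), giving (2) $\Leftrightarrow$ (3). The argmax version applied to $u^*$ gives (5) $\Leftrightarrow$ (3) in the same manner. The whole argument is computational and has no real obstacle; the only point that must be flagged is the appeal to biconjugacy, without which the symmetric half of the statement cannot be reduced to the asymmetric one.
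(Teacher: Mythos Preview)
Your argument is correct and is exactly the standard route to this result. The paper does not supply its own proof of this lemma; it simply cites \cite[Theorem 23.5]{rockafellar}, so there is no in-paper argument to compare against. Your hub-and-spoke organization around the Fenchel--Young equality, together with the appeal to Lemma~\ref{le:u**} for the biconjugate, is precisely how the result is established in that reference.
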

Here, $\argmax_{z\in V} f(z)$ denotes the points in the set $V$ at which the function values of $f$ are maximized on $V$.\par
For $K\in\Kn$ we consider the convex indicator function $I_K\in\CV$, which is defined as
$$
I_K(x)=\begin{cases}0,\quad &x\in K\\ +\infty,\quad &x\notin K. \end{cases}
$$
Furthermore, for $K\in\Ko$ we will consider the cone function $\l_K\in\CV$, which is defined via
$$\epi \l_K = \pos (K \times \{1\}),$$
where $\pos$ denotes the \emph{positive hull} or equivalently
\begin{equation}
\label{eq:lvl_set_lk}
\{\l_K\leq t\} = t\,
\end{equation}
for every $t\geq 0$. Note, that if $K\in\Koin$, then $\l_K\in \CVf$. Furthermore, for every $K\in\Koin$ we have $\l_K=h(K^*,\cdot)$ where $h(K^*,\cdot)$ denotes the \emph{support function} of $K^*$. For convex bodies $L\in \Kn$ the support function can be defined as
$h(L,x)=\max\nolimits_{y\in L} y\cdot x$
for every $x\in\Rn$.
\par
The following relations for $u\in\CV$, $K\in\Ko$, $L\in\Kn$, $t\in\R$, $x\in\Rn$, translations $\tau_y(x)=x+y$ and $\phi\in\SLn$ are easy to see:
\begin{align}
\label{eq:properties_conj}
\begin{split}
(u+t)^*(x) \; &= \; u^*(x)-t\\
(\l_K)^*(x) \; &= \; \Ind_{K^*}(x)\\
\Ind_L^*(x) \; &= \; h(L,x)\\
(u\circ \tau_y^{-1})^*(x) \; &= \; u^*(x)+y\cdot x\\
(u\circ \phi^{-1})^*(x) \; &= \; (u^*\circ\phi^t)(x)
\end{split}
\end{align}

Next, for $\delta>0$ and $u\in\CV$ we define the regularization $\reg_{\delta} u$ as
\begin{equation}
\label{eq:def_reg_delta}
\reg_{\delta} u = (u^* + \Ind_{\delta^{-1} Q^n})^*.
\end{equation}

\begin{lemma}
\label{le:lip_reg}
For $u,u_j,v\in\CV$ with $u\wedge v\in\CV$, $K\in\Ko$ and $\delta>0$ we have the following properties.
\begin{enumerate}
	\item $\reg_{\delta} u \in \CVf$.
	\item $\reg_{\delta} (u\wedge v) = \reg_{\delta} u \wedge \reg_{\delta} v$ and $\reg_{\delta} (u\vee v) = \reg_{\delta} u \vee \reg_{\delta} v$.
	\item $\reg_{\delta} u \eto u$ as $\delta \to 0$.
	\item If $u_j\to u$, then $\reg_{\delta} u_j \eto \reg_{\delta} u$ for sufficiently small $\delta>0$.
	\item $\reg_{\delta} (u+t) = (\reg_{\delta} u) + t$ for every $t\in\R$.
	\item $\reg_{\delta} (u \circ \tau^{-1}) = (\reg_{\delta} u)\circ \tau^{-1}$ for every translation $\tau$ on $\Rn$.
	\item There exists a unique $K_{\delta}\in\Koin$ such that $\reg_{\delta} \l_K = \l_{K_{\delta}}$. In particular, $K_{\delta} = \conv(K \cup \delta C^n)$. Furthermore, $K \subseteq K_{\delta}$ with equality iff $\delta C^n \subseteq K$. In particular, $K_{\delta}\to K$ in the Hausdorff metric as $\delta\to 0$.
\end{enumerate}
\end{lemma}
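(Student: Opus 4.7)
The common thread is the definition $\reg_\delta u = (u^* + \Ind_{\delta^{-1}Q^n})^*$, which invites a uniform strategy: transfer each of the seven claims to the conjugate side and conclude by invoking the conjugation toolkit compiled in the excerpt, namely Lemmas~\ref{le:u**}, \ref{le:conjugate_is_a_val}, \ref{le:props_of_u*}, the dictionary \eqref{eq:properties_conj}, and Wijsman's Theorem~\ref{th:wijsman}.

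For the algebraic items I would argue as follows. For (1), $u^* + \Ind_{\delta^{-1}Q^n}$ is proper, lower semicontinuous and convex with bounded domain, so Lemma~\ref{le:props_of_u*} forces its conjugate to be super-coercive with $\dom = \Rn$, hence finite and in $\CVf$. For (2), Lemma~\ref{le:conjugate_is_a_val} gives $(u \mn v)^* = u^* \mx v^*$; adding a common indicator distributes over both $\mx$ and $\mn$; a second application of Lemma~\ref{le:conjugate_is_a_val}, using that $(u^* + \Ind_{\delta^{-1}Q^n}) \mn (v^* + \Ind_{\delta^{-1}Q^n})$ is convex, closes the computation, and the $\mx$ case is symmetric. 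Item (5) follows from $(u+t)^* = u^* - t$ in \eqref{eq:properties_conj} by conjugating back. Item (6) uses $(u \circ \tau_y^{-1})^*(x) = u^*(x) + y\cdot x$ from \eqref{eq:properties_conj}: expanding the conjugate yields $\sup_{x \in \delta^{-1}Q^n}(x \cdot (z - y) - u^*(x)) = \reg_\delta u(z - y)$. For (7), $(\l_K)^* = \Ind_{K^*}$ gives $\reg_\delta \l_K = h(K^* \cap \delta^{-1}Q^n, \cdot)$; the polarity identity $(A \cap B)^* = \conv(A^* \cup B^*)$ together with $(Q^n)^* = C^n$ identifies $(K^* \cap \delta^{-1}Q^n)^*$ with $K_\delta := \conv(K \cup \delta C^n) \in \Koin$, so $\reg_\delta \l_K = \l_{K_\delta}$; the inclusion $K \subseteq K_\delta$, the equality criterion, and $K_\delta \to K$ in the Hausdorff metric as $\delta \to 0$ are immediate from this convex-hull formula.

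For the topological items (3) and (4) I would dualize via Wijsman's theorem. Property (3) reduces to $u^* + \Ind_{\delta^{-1}Q^n} \eto u^*$ as $\delta \to 0$: the liminf inequality holds trivially because the indicator only raises values, and the constant sequence $y_k \equiv y$ provides a recovery sequence once $\delta$ is small enough that $y \in \delta^{-1}Q^n$. Property (4) reduces to $u_j^* + \Ind_{\delta^{-1}Q^n} \eto u^* + \Ind_{\delta^{-1}Q^n}$ from $u_j^* \eto u^*$, and I expect this to be the main obstacle: the recovery sequence supplied by the epi-convergence of $u_j^*$ at a point of $\partial \delta^{-1}Q^n$ may momentarily exit the cube. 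I would control this by combining the uniform cone bound of Lemma~\ref{le:un_cone}, which yields a uniform upper bound on $u_j^*$ and $u^*$ on a fixed neighborhood of the origin, with the local uniform convergence $u_j^* \to u^*$ on compact subsets of $\interior \dom u^*$ (Theorem~\ref{thm:g_comp}); for $\delta$ small enough that this neighborhood lies well inside $\delta^{-1}Q^n$, a small inward perturbation of the recovery sequence keeps it in the cube while preserving convergence of the values.
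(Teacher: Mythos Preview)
Your strategy coincides with the paper's: compute on the conjugate side using the dictionary \eqref{eq:properties_conj}, Lemma~\ref{le:conjugate_is_a_val}, and Wijsman's theorem. The paper in fact defers items (1)--(4) to \cite[Section~4]{colesanti_ludwig_mussnig_hess} and only writes out (5), (6), (7) explicitly, along the same lines you give.

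Two small points. In (1), the conjugate $\reg_\delta u$ is \emph{coercive}, not super-coercive: Lemma~\ref{le:props_of_u*} gives $\dom \reg_\delta u=\Rn$ from the super-coercivity of $u^*+\Ind_{\delta^{-1}Q^n}$ (bounded domain), and coercivity of $\reg_\delta u$ from $0\in\interior\dom u^*\cap\interior\delta^{-1}Q^n$; super-coercivity would require $\dom(u^*+\Ind_{\delta^{-1}Q^n})=\Rn$, which is false. In (4), your boundary repair is not yet complete: the uniform bound near the origin and the local uniform convergence on compact subsets of $\interior\dom u^*$ handle recovery at points $y\in\interior\dom u^*$ (equi-Lipschitz control lets you push the recovery sequence into $\delta^{-1}Q^n$), but you have not addressed $y\in\partial(\delta^{-1}Q^n)\cap\partial\dom u^*$ with $u^*(y)<\infty$, where Theorem~\ref{thm:g_comp} gives no uniform control. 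One clean fix is to interpolate the original recovery sequence with the origin along a segment and use convexity together with the uniform bound $u_j^*(0)\le -b$ from Lemma~\ref{le:un_cone}. Since the paper itself does not spell this out, your sketch is at the same level of detail.
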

\begin{proof}
The proofs of 1, 2, 3 and 4 follow along similar lines as the proofs of corresponding results for the well-known Lipschitz regularization, see for example \cite[Section 4]{colesanti_ludwig_mussnig_hess}. Since for any proper, lower semicontinuous, convex function $u$ and $t\in\R$ we have $(u+t)^*=u^*-t$, it follows for $u\in\CV$ and $\delta>0$ that
\begin{align*}
\reg_{\delta}(u+t)=(u^*-t+\Ind_{\delta^{-1} Q^n})^* = (u^*+\Ind_{\delta^{-1} Q^n})^*+t= (\reg_{\delta} u) + t.
\end{align*}
Similarly, one shows covariance with respect to translations.\par
Next, let $K\in\Ko$ and observe that
$$\reg_{\delta} \l_K = (\Ind_{K^*} + \Ind_{\delta^{-1} Q^n})^* = (\Ind_{K^* \cap \delta^{-1} Q^n})^* = \l_{(K^* \cap \delta^{-1} Q^n)^*}.$$
Since $K^* \cap \delta^{-1} Q^n$ is an element of $\Koin$, we have $\reg_{\delta} \l_K = \l_{K_{\delta}}$ with
$$K_{\delta} := (K^* \cap \delta^{-1} Q^n)^* = \conv(K \cup \delta C^n) \in \Koin,$$
see for example \cite[Theorem 1.6.3]{schneider}. Furthermore, since polarity is order reversing, it is easy to see that $K\subseteq K_{\delta}$. Finally, $\delta C^n\subseteq K$ is equivalent to $K=K_{\delta}$.
\end{proof}

\begin{remark}
Another way to define $\reg_{\delta} u$ would be
$$\epi \reg_{\delta} u := \epi u + \epi \l_{\delta C^n},$$
where $+$ denotes the \emph{Minkowski addition} (that is adding each element of one set with each element of the other set). For details see \cite[Section 1.6]{schneider}.
\end{remark}

To proof Lemma~\ref{le:reduction} we will also need another class of convex functions. We call a function $u\in\CV$ \emph{piecewise affine}, if there exist finitely many $n$-dimensional convex polyhedra
$Q_1, \ldots , Q_m$ with pairwise disjoint interiors such that $\bigcup_{i=1}^m Q_i = \Rn$ and the restriction of $u$ to each $Q_i$ is an affine function. We will denote the set of piecewise affine convex functions by $\CVpa$.\par
Since epi-convergence on $\CVf$ is equivalent to pointwise convergence, the following result is easy to see (see also \cite[Lemma 11]{colesanti_ludwig_mussnig}).

\begin{lemma}
\label{le:dense}
$\CVpa$ is a dense subset of $\CVf$.
\end{lemma}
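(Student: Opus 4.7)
The strategy is to approximate each $u\in\CVf$ by a pointwise maximum of finitely many affine minorants determined by subgradients of $u$. Since $u$ is finite-valued and convex, it is continuous on $\Rn$ and has a nonempty subdifferential at every point, so the subgradient inequality $u(z)\ge u(x)+y\cdot(z-x)$ holds for every $y\in\partial u(x)$ and every $z\in\Rn$, with equality at $z=x$. This turns finite maxima of such minorants into natural piecewise affine lower bounds of $u$.

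I would fix a countable dense sequence $\{x_m\}_{m\in\N}\subset\Rn$, choose $y_m\in\partial u(x_m)$ for each $m$, and set $\ell_m(z):=u(x_m)+y_m\cdot(z-x_m)$. Then each $v_k:=\ell_1\mx\cdots\mx\ell_k$ is convex and piecewise affine and satisfies $v_k\le u$. For pointwise convergence, fix $x\in\Rn$ and $\e>0$. By continuity of $u$ at $x$ together with local boundedness of $\partial u$ on $\Rn$, one can pick $m$ with $x_m$ so close to $x$ that $u(x_m)>u(x)-\e/2$ and $|y_m\cdot(x-x_m)|<\e/2$; hence $v_k(x)\ge\ell_m(x)>u(x)-\e$ for every $k\ge m$. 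Together with $v_k\le u$ this yields $v_k(x)\to u(x)$ for every $x\in\Rn$, and by the remark following Theorem~\ref{thm:g_comp} this is equivalent to $v_k\eto u$.

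The main obstacle is that $v_k$ must itself lie in $\CVpa\subset\CVf$, that is, be coercive. A pointwise maximum of affine functions $z\mapsto y_i\cdot z+c_i$ is coercive if and only if $0$ lies in the interior of $\conv\{y_i\}$. Here coerciveness of $u$ comes to the rescue: by Lemma~\ref{le:props_of_u*} we have $0\in\interior\dom u^*$, so one can fix finitely many vectors $w_1,\dots,w_N\in\dom u^*$ with $0\in\interior\conv\{w_1,\dots,w_N\}$ and, using Lemma~\ref{le:subgr_conj}, corresponding points $\tilde x_1,\dots,\tilde x_N\in\Rn$ with $w_i\in\partial u(\tilde x_i)$. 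Including the affine minorants $z\mapsto u(\tilde x_i)+w_i\cdot(z-\tilde x_i)$ in every $v_k$ forces $v_k$ to dominate a coercive piecewise affine function, and hence to belong to $\CVpa$, while leaving the convergence argument above unchanged.
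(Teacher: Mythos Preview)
Your argument is correct. The paper does not give a detailed proof of this lemma; it simply notes that epi-convergence on $\CVf$ coincides with pointwise convergence (the remark after Theorem~\ref{thm:g_comp}) and refers to \cite[Lemma 11]{colesanti_ludwig_mussnig}. Your construction---approximating $u$ from below by maxima of finitely many supporting affine functions, together with the observation that coerciveness of $u$ (equivalently $0\in\interior\dom u^*$) allows one to include enough slopes to force each $v_k$ to be coercive---is exactly the standard way to make this ``easy'' statement precise, and it uses the same reduction to pointwise convergence that the paper invokes.
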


\section{Valuations on Convex Functions}
In this section we will consider several $\SLn$ invariant valuations on spaces of convex functions, most of which can be interpreted as functional analogs of well known operators on $\Kn$.\par
In the following we say that a real-valued valuation $\oZ$, defined on a subset $\mathcal{S}$ of $\CV$, is \emph{homogeneous of degree $p\in\R$} if $\oZ(u_\lambda) = \lambda^p \oZ(u)$ for every $u\in\mathcal{S}$ and $\lambda>0$, where $u_\lambda(x):=u(\frac x\lambda)$. Note, that $(u_\lambda)^*=(u^*)_{\lambda^{-1}}$.
\subsection{Minimum and Integral}
The following operator can be seen as a functional analog of the Euler characteristic.

\begin{lemma}[\!\!\cite{colesanti_ludwig_mussnig}, Lemma 12]
\label{le:min_is_a_val}
For a continuous function $\zeta:\R\to\R$ the map
$$
u\mapsto \zeta(\min\nolimits_{x\in\Rn} u(x))
$$
defines a continuous, $\SLn$ and translation invariant valuation on $\CV$ that is homogeneous of degree $0$.
\end{lemma}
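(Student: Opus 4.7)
The plan is to verify the four claimed properties separately, noting first that for $u \in \CV$ the minimum is attained and finite (as recalled in Section \ref{se:convex_functions}), so $\oZ(u) = \zeta(\min_{x \in \Rn} u(x))$ is a well-defined real number. Homogeneity of degree $0$, $\SLn$ invariance and translation invariance are all immediate, since $x \mapsto x/\lambda$, $\phi^{-1}$ and translations are bijections of $\Rn$, so that $u_\lambda$, $u \circ \phi^{-1}$ and $u \circ \tau^{-1}$ share the same range, and hence the same minimum, as $u$.

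The heart of the lemma is the valuation identity. I would reduce it to the elementary numerical identity $\zeta(\max\{a,b\}) + \zeta(\min\{a,b\}) = \zeta(a) + \zeta(b)$ applied with $a = \min u$, $b = \min v$, which in turn requires
\begin{equation*}
\min(u \wedge v) = \min\{\min u, \min v\}, \qquad \min(u \vee v) = \max\{\min u, \min v\}.
\end{equation*}
The first is immediate from the pointwise formula $(u \wedge v)(x) = \min\{u(x), v(x)\}$. For the second, the inequality "$\geq$" is immediate from $(u \vee v)(x) \geq u(x) \geq \min u$ and $(u \vee v)(x) \geq v(x) \geq \min v$. For "$\leq$" I would argue on sublevel sets: fix $t > t^* := \max\{\min u, \min v\}$, so that $\{u \leq t\}$ and $\{v \leq t\}$ are nonempty closed convex subsets of $\Rn$ and, by hypothesis, their union $\{u \wedge v \leq t\}$ is convex. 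The union of two disjoint nonempty closed convex subsets of $\Rn$ cannot be convex, since any segment joining points of the two pieces would be a connected set splitting as a disjoint union of two nonempty closed subsets; hence the two sublevel sets intersect, so $\{u \vee v \leq t\}$ is nonempty. Since $u \vee v \in \CV$ has compact sublevel sets, the decreasing intersection $\bigcap_{t > t^*}\{u \vee v \leq t\} = \{u \vee v \leq t^*\}$ is nonempty by the finite intersection property, which yields $\min(u \vee v) \leq t^*$.

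For continuity, suppose $u_k \eto u$ in $\CV$; I would show $\min u_k \to \min u$ and then apply continuity of $\zeta$. Picking a minimizer $x$ of $u$ and applying condition (ii) of epi-convergence produces $x_k \to x$ with $u_k(x_k) \to u(x)$, so $\limsup_{k \to \infty} \min u_k \leq \limsup_{k \to \infty} u_k(x_k) = \min u$. For the reverse inequality, let $x_k$ be a minimizer of $u_k$, which exists by coerciveness. The uniform cone property (Lemma \ref{le:un_cone}) supplies constants $a > 0$ and $b \in \R$ with $a|x_k| + b < u_k(x_k) = \min u_k$, and since $\min u_k$ is bounded above by the previous step, $(x_k)$ is bounded. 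For any convergent subsequence $x_{k_j} \to x^*$, condition (i) of epi-convergence then yields $\min u \leq u(x^*) \leq \liminf_{j \to \infty} u_{k_j}(x_{k_j}) = \liminf_{j \to \infty} \min u_{k_j}$, and a standard subsequence argument promotes this to $\liminf_{k \to \infty} \min u_k \geq \min u$.

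The step I expect to require the most care is the sublevel-set identity $\min(u \vee v) = \max\{\min u, \min v\}$, since it is the only place where the hypothesis $u \wedge v \in \CV$ is genuinely used, and because passing from strict inequality $t > t^*$ down to the boundary value $t = t^*$ relies on the compactness built into coerciveness. All remaining properties follow from essentially formal manipulations together with the results recalled in Section \ref{se:convex_functions}.
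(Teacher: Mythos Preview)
Your argument is correct. The paper does not actually supply a proof of this lemma: it is stated with a citation to \cite{colesanti_ludwig_mussnig}, Lemma~12, and used as a black box. So there is no ``paper's own proof'' to compare against here; your write-up is a genuine, self-contained verification of a result the paper merely imports.

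A few minor remarks. In the valuation step you implicitly use that $u\vee v\in\CV$ (to invoke compact sublevel sets); this is fine, since your connectedness argument already shows $\{u\vee v\le t\}\neq\emptyset$ for every $t>t^*$, whence $u\vee v$ is proper, and it is clearly convex, lower semicontinuous and coercive. You might make this explicit in a final version. Also, the identity $\bigcap_{t>t^*}\{u\vee v\le t\}=\{u\vee v\le t^*\}$ is purely order-theoretic and does not need lower semicontinuity, so your use of compactness there is exactly right. The continuity argument via the uniform cone property (Lemma~\ref{le:un_cone}) and the two defining conditions of epi-convergence is the standard route and matches how the cited reference proceeds.
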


The properties of the next operator are similar to those of the volume operator of convex bodies.

\begin{lemma}[\!\!\cite{colesanti_ludwig_mussnig}, Lemma 16]
\label{le:vol_is_a_val}
For a continuous function $\zeta:\R\to[0,\infty)$ with finite moment of order $n-1$, the map
$$
u\mapsto \int_{\dom u} \zeta(u(x)) \d x
$$
defines a non-negative, continuous, $\SLn$ and translation invariant valuation on $\CV$ that is homogeneous of degree $n$.
\end{lemma}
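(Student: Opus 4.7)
The plan is to verify five ingredients for $\oZ(u):=\int_{\dom u}\zeta(u(x))\d x$: finiteness, the valuation identity, non-negativity together with $\SLn$- and translation-invariance and $n$-homogeneity, and continuity on $\CV$ (the main obstacle).

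First I would use the cone property (Lemma~\ref{le:cone}) to obtain $a>0$, $b\in\R$ with $u(x)>a|x|+b$ on $\R^n$, so that $\{u\leq t\}\subseteq\{x:a|x|+b\leq t\}$ and the non-decreasing function $F(t):=V_n(\{u\leq t\})$ obeys the polynomial majorant $F(t)\leq \omega_n((t-b)_+/a)^n$. Writing the integral in its Lebesgue--Stieltjes form $\oZ(u)=\int_{\R}\zeta(t)\d F(t)$ (legitimate because convexity forces $\{u=t\}$ to have Lebesgue measure zero for $t>\min_{x\in\R^n}u(x)$), this polynomial bound together with the assumption $\int_0^{+\infty}t^{n-1}\zeta(t)\d t<+\infty$ forces $\oZ(u)<+\infty$. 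The valuation identity then reduces to the pointwise fact that the multisets $\{(u\mn v)(x),(u\mx v)(x)\}$ and $\{u(x),v(x)\}$ coincide for every $x\in\R^n$, hence $\zeta\circ(u\mx v)+\zeta\circ(u\mn v)=\zeta\circ u+\zeta\circ v$ pointwise; combining this with $\dom(u\mx v)=\dom u\cap\dom v$ and $\dom(u\mn v)=\dom u\cup\dom v$ and integrating yields the identity. Non-negativity is immediate from $\zeta\geq 0$; $\SLn$- and translation-invariance follow from $|\det\phi|=1$ and translation-invariance of Lebesgue measure via the substitutions $y=\phi^{-1}x$ and $y=x-\tau$; and the substitution $y=x/\lambda$ applied to $u_\lambda(x)=u(x/\lambda)$ gives $\oZ(u_\lambda)=\lambda^n\oZ(u)$.

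The hard part is continuity. Given $u_k\eto u$ in $\CV$, I would invoke the uniform cone property (Lemma~\ref{le:un_cone}) to dominate every $u_k$ and $u$ from below by a \emph{common} cone $a|x|+b$, so that the distribution functions $F_k(t):=V_n(\{u_k\leq t\})$ all satisfy the same polynomial majorant $F_k(t)\leq \omega_n((t-b)_+/a)^n$. Lemma~\ref{le:hd_conv_lvl_sets} then gives $\{u_k\leq t\}\to\{u\leq t\}$ in the Hausdorff metric for every $t\neq\min_{x\in\R^n}u(x)$, hence $F_k(t)\to F(t)$ at every continuity point of $F$. The target $\int\zeta\d F_k\to\int\zeta\d F$ I would handle by first truncating $\zeta$ to a compactly supported continuous function $\zeta_T$ (agreeing with $\zeta$ on $[-T,T]$ and vanishing far away), for which weak convergence of the finite measures $\d F_k\restriction[-T,T+1]$ to $\d F\restriction[-T,T+1]$ together with $F_k(T)\to F(T)$ delivers $\int\zeta_T\d F_k\to\int\zeta_T\d F$; afterwards I would let $T\to+\infty$ and bound the tail $\int_{[T,+\infty)}\zeta\d F_k$ uniformly in $k$ via integration by parts against the common polynomial majorant, using the finite moment of order $n-1$ of $\zeta$ to make the error go to zero as $T\to+\infty$. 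Arranging this uniform-in-$k$ tail control without assuming any monotonicity or differentiability of $\zeta$ is the technical core of the argument; all other pieces are essentially bookkeeping.
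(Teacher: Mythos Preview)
The paper does not supply a proof of this lemma; it is quoted verbatim from \cite{colesanti_ludwig_mussnig}, Lemma~16, so there is no in-paper argument to compare against. Your overall architecture---cone property for finiteness, pointwise identity for the valuation property, change of variables for invariance and homogeneity, and uniform cone plus level-set convergence for continuity---is the natural one and matches what one finds in the cited source.

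There is, however, one genuine soft spot. Twice you rely on ``integration by parts against the common polynomial majorant'' to pass from the bound $F_k(t)\le \omega_n((t-b)_+/a)^n$ to control of $\int_{[T,\infty)}\zeta\,\d F_k$. Since $\zeta$ is only assumed continuous (not monotone, not of bounded variation), a Riemann--Stieltjes integration by parts is not available, and the inequality $F_k\le G$ alone does \emph{not} imply $\int\zeta\,\d F_k\le\int\zeta\,\d G$ for general non-negative continuous $\zeta$ (think of $\zeta$ with narrow tall spikes). What rescues the argument is a structural fact you have not invoked: by convexity of $u_k$ one has $(1-\lambda)\{u_k\le s\}+\lambda\{u_k\le t\}\subseteq\{u_k\le(1-\lambda)s+\lambda t\}$, so Brunn--Minkowski makes $t\mapsto F_k(t)^{1/n}$ \emph{concave} on $[\min u_k,\infty)$. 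Combined with the uniform linear upper bound $F_k(t)^{1/n}\le \omega_n^{1/n}(t-b)/a$ and the fact that $\min u_k\to\min u$ (hence the minima are uniformly bounded), this yields a uniform derivative bound $F_k'(t)\le C(t-b)^{n-1}$ for all $t$ beyond a fixed threshold. Then
\[
\int_T^\infty \zeta\,\d F_k \;\le\; C\int_T^\infty \zeta(t)(t-b)^{n-1}\,\d t \;\longrightarrow\; 0\quad(T\to\infty)
\]
uniformly in $k$, directly from the moment hypothesis. The same concavity observation cleanly handles the initial finiteness claim. Once you insert this step, your proof goes through.
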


\subsection{Polar Volumes}
By Lemma~\ref{le:props_of_u*} it is easy to see that
\begin{align*}
\CV^*:&=\{u^*\,:\,u\in\CV\}\\
&=\{u:\Rn\to(-\infty,\infty]\,:\, u \text{ is proper, l.s.c., convex}, 0\in\interior\dom u\}.
\end{align*}
Now for a valuation $\oZ$ on $\CV$, the map
$$u\mapsto \oZ^*(u)=\oZ(u^*)$$
defines a valuation on $\CV^*$ since by Lemma~\ref{le:conjugate_is_a_val}
\begin{align*}
\oZ^*(u\mn v) + \oZ^*(u\mx v) &= \oZ((u\mn v)^*) + \oZ((u\mx v)^*)\\
&= \oZ(u^* \mx v^*) + \oZ(u^* \mn v^*)\\
&= \oZ(u^*) + \oZ(v^*)\\
&= \oZ^*(u) + \oZ^*(v)
\end{align*}
for every $u,v\in\CV^*$ such that $u\mn v\in\CV^*$. Furthermore, if $\oZ$ is continuous and $\SLn$ invariant, then by Theorem~\ref{th:wijsman} and (\ref{eq:properties_conj}) the operator $\oZ^*$ is also continuous and $\SLn$ invariant. Moreover, if $\oZ$ is translation invariant, then $\oZ^*$ is \emph{linear invariant}, that is
$$\oZ^*(u+l) = \oZ^*(u)$$
for every $u\in\CV^*$ and $l:\Rn\to\Rn$ of the form $l(x)=y\cdot x$ with some $y\in\Rn$. Lastly, if $\oZ$ is homogeneous of degree $p\in\R$, then
$$\oZ^*(u_\lambda)=\oZ((u_\lambda)^*) = \oZ((u^*)_{\lambda^{-1}}) = \lambda^{-p} \oZ(u^*) = \lambda^{-p} \oZ^*(u),$$
which shows that $\oZ^*$ is homogeneous of degree $-p$. Hence, by Lemma~\ref{le:vol_is_a_val} we have the following result.

\begin{lemma}
\label{le:polar_vol1}
For a continuous function $\zeta:\R\to[0,\infty)$ with finite moment of order $n-1$, the map
\begin{equation}
\label{eq:polar_vol1}
u\mapsto \int_{\dom u^*} \zeta(u^*(x)) \d x
\end{equation}
defines a non-negative, continuous, $\SLn$ and linear invariant valuation on $\CV^*$ that is homogeneous of degree $-n$.
\end{lemma}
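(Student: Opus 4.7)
The plan is to deduce this lemma directly from Lemma~\ref{le:vol_is_a_val} by applying the general $\oZ \mapsto \oZ^*$ construction that was verified in the discussion immediately preceding the statement. In other words, I would treat the lemma as a specialization of an already established correspondence between valuations on $\CV$ and valuations on $\CV^*$.

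First, I would invoke Lemma~\ref{le:vol_is_a_val} with the hypothesis on $\zeta$ to obtain that $\oZ:\CV\to[0,\infty)$ defined by
$$\oZ(v) = \int_{\dom v} \zeta(v(x)) \d x$$
is a non-negative, continuous, $\SLn$ and translation invariant valuation on $\CV$ that is homogeneous of degree $n$.

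Second, I would apply the general construction from the paragraph preceding the lemma to $\oZ$. Concretely, for $u\in\CV^*$ we have $u^*\in\CV$ by Lemma~\ref{le:u**}, so we may set $\oZ^*(u) = \oZ(u^*)$. The computation right after the definition of $\oZ^*$, which uses Lemma~\ref{le:conjugate_is_a_val}, shows that $\oZ^*$ is a valuation on $\CV^*$. Continuity of $\oZ^*$ then follows from continuity of $\oZ$ together with Theorem~\ref{th:wijsman} (epi-convergence is preserved under conjugation), and $\SLn$ invariance of $\oZ^*$ follows from $\SLn$ invariance of $\oZ$ combined with the conjugation rule $(u\circ\phi^{-1})^* = u^*\circ\phi^t$ in (\ref{eq:properties_conj}) (together with the fact that $\phi\in\SLn$ iff $\phi^t\in\SLn$). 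The translation invariance of $\oZ$ becomes linear invariance of $\oZ^*$ because $(u\circ\tau_y^{-1})^* = u^* + y\cdot$, and degree $n$ homogeneity becomes degree $-n$ homogeneity because $(u_\lambda)^* = (u^*)_{\lambda^{-1}}$. All of these implications were already spelled out in the paragraph preceding the lemma, so I would simply cite them. Non-negativity is preserved trivially since $\zeta \geq 0$.

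Finally, substituting $u^*$ into the formula for $\oZ$ yields $\oZ^*(u) = \int_{\dom u^*}\zeta(u^*(x))\d x$, which is exactly (\ref{eq:polar_vol1}). Since every step is either an invocation of a previously verified statement or a trivial substitution, there is no genuine obstacle; the lemma is essentially a corollary, with all the substantive work already done in Lemma~\ref{le:vol_is_a_val} and in the general $\oZ\mapsto\oZ^*$ bookkeeping.
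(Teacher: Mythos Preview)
Your proposal is correct and follows exactly the paper's approach: the paper presents Lemma~\ref{le:polar_vol1} as an immediate consequence of Lemma~\ref{le:vol_is_a_val} via the $\oZ\mapsto\oZ^*$ construction spelled out in the preceding paragraph, with no separate proof given.
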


\begin{remark}
If (\ref{eq:polar_vol1}) is evaluated for functions of the type $\l_K$ or $\Ind_K$ with $K\in\Koin$, a multiple of $V_n^*(K)$ is obtained. However, if we evaluate for the support function $h(K,\cdot)$ of a convex body $K\in\Kn$, we obtain
$$\int_{\dom h^*(K,\cdot)} \zeta(h^*(K,x)) \d x = \int_{ K } \zeta(\Ind_K(x))\d x = \zeta(0) V_n(K).$$
Note, that for every $y\in\Rn$ we have $h(K+y,x) = h(K,x) + y\cdot x$ for $x\in\Rn$. In particular, the valuation defined in (\ref{eq:polar_vol1}) is not translation invariant anymore.
\end{remark}

In order to obtain a translation invariant analog of the polar volume on $\CVf$ we need the following result.
\begin{proposition}[\!\!\cite{colesanti_ludwig_mussnig_hess}, Section 10.4 and Theorem 11.1]
\label{pr:hess_is_cont_val}
For a continuous function $\zeta:\R\times\Rn\to\R$ with compact support, the map
$$u\mapsto \int_{\dom u^*} \zeta(\nabla u^*(x)\cdot x - u^*(x), x) \d x$$
defines a continuous and translation invariant valuation on $\CV$.
\end{proposition}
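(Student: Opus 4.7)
The plan is to verify that the map satisfies the three defining properties—being a valuation, being translation invariant, and being continuous—by exploiting the conjugate duality collected in Section~\ref{se:convex_functions}, and to invoke \cite{colesanti_ludwig_mussnig_hess} for the more delicate analytic parts. Translation invariance should come essentially for free: by (\ref{eq:properties_conj}), if $v = u\circ \tau_y^{-1}$ then $v^*(x) = u^*(x) + y\cdot x$, so $\nabla v^*(x) = \nabla u^*(x) + y$ and a direct calculation yields $\nabla v^*(x)\cdot x - v^*(x) = \nabla u^*(x)\cdot x - u^*(x)$; since $\dom v^* = \dom u^*$, the entire integrand is pointwise invariant.

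For the valuation property, I would take $u,v \in \CV$ with $u\wedge v\in\CV$ and apply Lemma~\ref{le:conjugate_is_a_val} to rewrite $(u\vee v)^* = u^* \wedge v^*$ and $(u\wedge v)^* = u^* \vee v^*$. Writing $f(w)(x) := \zeta(\nabla w(x)\cdot x - w(x),x)$, the claimed identity reduces to a pointwise relation between $f(u^*\wedge v^*) + f(u^*\vee v^*)$ and $f(u^*) + f(v^*)$ on $\dom u^* \cup \dom v^*$. I would split this domain according to the sign of $u^*-v^*$: on a.e.\ point of $\{u^* < v^*\}$ the pointwise minimum (resp.~maximum) agrees with $u^*$ (resp.~$v^*$) in a whole neighborhood, so the values and gradients coincide a.e.; the set $\{u^* = v^*\}$ is handled by the standard convexity fact that two convex functions agreeing on a set of positive measure have coinciding subdifferentials at Lebesgue density points of that set. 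This gives additivity of the integrals.

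The genuinely hard step is continuity, which is where I would lean on \cite{colesanti_ludwig_mussnig_hess} directly. Given $u_k \eto u$, Theorem~\ref{th:wijsman} yields $u_k^* \eto u^*$, and since $u\in\CV$ is coercive, Lemma~\ref{le:props_of_u*} places $0$ in $\interior \dom u^*$; Theorem~\ref{thm:g_comp} then upgrades epi-convergence of the $u_k^*$ to uniform convergence on compact subsets of $\interior \dom u^*$, and a classical result for convex functions yields pointwise a.e.\ convergence of $\nabla u_k^*$ to $\nabla u^*$ there. Compactness of $\supp \zeta$ (in both variables) traps the integrand in a fixed bounded region and provides a uniform bound, so dominated convergence closes the argument. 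The main obstacle is managing the behavior near $\partial \dom u^*$—where $u_k^*$ can jump between bounded values and $+\infty$—together with the fact that the domains $\dom u_k^*$ themselves vary with $k$; this is precisely the content carried out in Section~10.4 and Theorem~11.1 of \cite{colesanti_ludwig_mussnig_hess}, which I would simply cite rather than redo.
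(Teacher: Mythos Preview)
The paper does not prove this proposition at all: it is stated with the citation to \cite{colesanti_ludwig_mussnig_hess} in the header and then used as a black box. Your sketch is sound and in fact supplies more detail than the paper does---your pointwise arguments for translation invariance (via (\ref{eq:properties_conj})) and for the valuation property (via Lemma~\ref{le:conjugate_is_a_val} together with the a.e.\ splitting of $\dom u^*\cup\dom v^*$) are correct---while for continuity you end up invoking the same reference the paper already cites, so on the only nontrivial point the two coincide.
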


\begin{lemma}
\label{le:polar_vol_is_a_val}
For a continuous function $\zeta:\R\to\R$ such that $\zeta(t)=0$ for all $t\geq T$ with some $T\in\R$, the map
\begin{equation}
\label{eq:val_polar_volume}
u \mapsto \int_{\dom u^*} \zeta(\nabla u^*(x)\cdot x - u^*(x)) \d x
\end{equation}
defines a continuous, $\SLn$ and translation invariant valuation on $\CVf$ that is homogeneous of degree $-n$.
\end{lemma}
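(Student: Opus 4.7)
The starting point is to rewrite the integrand using the Fenchel duality of Lemma~\ref{le:subgr_conj}: at every $x\in\interior\dom u^*$ where $u^*$ is differentiable,
\[
\nabla u^*(x)\cdot x - u^*(x)=u\bigl(\nabla u^*(x)\bigr),
\]
so that the integrand equals $\zeta\bigl(u(\nabla u^*(x))\bigr)$ almost everywhere. Since $\zeta(t)=0$ for $t\geq T$ and $u\in\CVf$ is coercive, the sublevel set $K_T:=\{u\leq T\}$ is a (possibly empty) convex body, and the integrand vanishes outside
\[
E_u:=\{x\in\interior\dom u^*:\nabla u^*(x)\in K_T\}\subseteq \bigcup_{y\in K_T}\partial u(y).
\]
Because $u$ is finite-valued and hence locally Lipschitz, the right-hand subdifferential image is bounded; moreover, Fenchel's identity bounds $u^*$ on $E_u$. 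Together with the continuity of $\zeta$, this shows that the integral in~(\ref{eq:val_polar_volume}) is finite and well-defined.

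Next I would establish the invariance and homogeneity properties by direct computation using the identities in~(\ref{eq:properties_conj}). For a translation $\tau_y$, the formula $(u\circ\tau_y^{-1})^*(x)=u^*(x)+y\cdot x$ leaves $\dom u^*$ unchanged, and shifting $\nabla u^*(x)$ by $y$ cancels the new linear term in $\nabla u^*(x)\cdot x - u^*(x)$. For $\phi\in\SLn$, the formula $(u\circ\phi^{-1})^*=u^*\circ\phi^t$ combined with the chain rule and the substitution $y=\phi^t x$ (whose Jacobian has modulus $1$) yields $\SLn$ invariance. For the dilation $u_\lambda$, one has $(u_\lambda)^*(y)=u^*(\lambda y)$, and the substitution $z=\lambda y$ produces the factor $\lambda^{-n}$, proving homogeneity of degree $-n$.

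The remaining two properties, continuity and the valuation identity, will be obtained by approximation. For $R,M>0$ choose continuous cutoffs $\psi_R:\Rn\to[0,1]$ equal to $1$ on $RB^n$ and vanishing off $(R+1)B^n$, and $\chi_M:\R\to[0,1]$ equal to $1$ on $[-M,\infty)$ and vanishing on $(-\infty,-M-1]$, and set $\zeta_{R,M}(t,x):=\zeta(t)\chi_M(t)\psi_R(x)$, which is continuous with compact support in $\R\times\Rn$. By Proposition~\ref{pr:hess_is_cont_val}, the functional
\[
\oZ_{R,M}(u):=\int_{\dom u^*}\zeta_{R,M}\bigl(\nabla u^*(x)\cdot x-u^*(x),\,x\bigr)\d x
\]
is a continuous, translation invariant valuation on $\CV$. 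The main obstacle is to show that on every epi-convergent sequence $u_j\eto u$ in $\CVf$, and on every pair $u,v\in\CVf$ with $u\wedge v\in\CVf$, one can pick $R$ and $M$ uniformly large enough so that $\oZ_{R,M}$ coincides with the functional in~(\ref{eq:val_polar_volume}) on all the functions involved. This reduces to producing uniform bounds on the supports $E_{u_j},E_u,\dots$ and on the values of $u_j^*,u^*,\dots$ there. Such uniformity is provided by the uniform cone property (Lemma~\ref{le:un_cone}), which forces all the sublevel sets $\{u_j\leq T\},\{u\leq T\}$ into a common ball, together with the uniform convergence of $u_j$ to $u$ on compact sets from Theorem~\ref{thm:g_comp}, which yields a uniform local Lipschitz bound and hence a uniform bound on subgradients. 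Once this uniformity is established, continuity and the valuation identity for~(\ref{eq:val_polar_volume}) descend directly from the corresponding properties of $\oZ_{R,M}$.
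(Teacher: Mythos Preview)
Your approach is correct and essentially coincides with the paper's proof: both reduce continuity and the valuation property to Proposition~\ref{pr:hess_is_cont_val} by multiplying $\zeta$ with compactly supported cutoffs in the $t$- and $x$-variables, and both justify this reduction via the uniform cone property (Lemma~\ref{le:un_cone}) together with the uniform local Lipschitz bound from Theorem~\ref{thm:g_comp}, while $\SLn$ invariance and $(-n)$-homogeneity are obtained by direct substitution using~(\ref{eq:properties_conj}). The only cosmetic difference is that you make the Fenchel identity $\nabla u^*(x)\cdot x-u^*(x)=u(\nabla u^*(x))$ and the well-definedness of the integral explicit at the outset, whereas the paper folds this into the continuity argument.
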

\begin{proof}
Let $u_k,u\in\CVf$ such that $u_k \eto u$. By Lemma~\ref{le:un_cone}, there exist $b\in\R$ and $R>0$ such that $u_k(x),u(x)\geq b$ for every $x\in\Rn$ and $u_k(x),u(x)\geq T$ for every $x\in \Rn$ with $|x|\geq R$ and every $k\in\N$. By Theorem~\ref{thm:g_comp} the functions $u_k$ converge uniformly to $u$ on $B_R^n:=R\cdot B^n$. Furthermore, as convex functions they are Lipschitz continuous on this set and therefore there exists $C>0$ such that $|y|\leq C$ for every $y\in \partial u_k(x)\cup \partial u(x)$ with $x\in B_R^n$ and $k\in\N$. Hence, $u(x),u_k(x)\geq T$ for every $x\in\Rn$ such that there exists $y\in \partial u(x) \cup u_k(x)$ with $|y|> C$. Lemma~\ref{le:subgr_conj} now shows that for every pair $x,y\in\Rn$ such that $y\in\partial u(x)$ and $|y|>C$ we have
$$x\cdot y - u^*(y) = u(x) \geq T$$
and similarly if $y\in\partial u_k(x)$ with $|y|> C$
$$x\cdot y - u_k^*(y) = u_k(x) \geq T,$$
for every $k\in\N$. Using Lemma~\ref{le:subgr_conj} again and considering that convex functions are differentiable a.e. this gives
$$\nabla u^*(y)\cdot y - u^*(y) \geq T\qquad \text{and}\qquad \nabla u_k^*(y)\cdot y - u_k^*(y) \geq T$$
for a.e. $y\in\Rn$ with $|y|> C$ and for every $k\in\N$.\par
Next, let $\xi:\Rn\to \R$ be a smooth function with compact support such that $\xi\equiv 1$ on $B_C^n$ and let $\eta:\R\to\R$ be a smooth function with compact support such that $\eta\equiv 1$ on $[b,T]$. Define $\bar{\zeta}\in C(\R\times \Rn)$ as $\bar{\zeta}(t,y):=\zeta(t,y)\eta(t)\xi(y)$ for every $t\in\R$ and $y\in\Rn$. Using Proposition~\ref{pr:hess_is_cont_val} we now have
\begin{multline*}\int_{\dom u_k^*} \zeta(\nabla u_k^*(x)\cdot x - u_k^*(x)) \d x = \int_{\dom u_k^*} \bar{\zeta}(\nabla u_k^*(x)\cdot x - u_k^*(x),x) \d x\\
\stackrel{k \to \infty}{\longrightarrow} \int_{\dom u^*} \bar{\zeta}(\nabla u^*(x)\cdot x - u^*(x),x) \d x = \int_{\dom u^*} \zeta(\nabla u^*(x)\cdot x - u^*(x)) \d x,
\end{multline*}
which shows that (\ref{eq:val_polar_volume}) defines a continuous, translation invariant valuation on $\CVf$.\par
It remains to show $\SLn$ invariance and homogeneity. Therefore, let $\phi\in\SLn, \lambda>0$ and observe that
\begin{align*}
\int_{\dom (u_\lambda \circ \phi^{-1})^*} \zeta( \nabla (u_\lambda \circ \phi^{-1})^*(x) &\cdot x - (u_\lambda\circ \phi^{-1})^*(x)) \d x\\
&= \int_{\dom u^*\circ \lambda \phi^t} \zeta (\nabla (u^* \circ \lambda\phi^t)(x)\cdot x - u^*(\lambda \phi^t x))\d x\\
&= \int_{(\lambda \phi^{t})^{-1} \dom u^*} \zeta(\nabla u^*(\lambda \phi^t x) \cdot \lambda \phi^t x - u^*(\lambda \phi^t x)) \d x \\
&= \lambda^{-n} \int_{\dom u^*} \zeta(\nabla u^*(x)\cdot x - u^*(x)) \d x,
\end{align*}
which concludes the proof.
\end{proof}

\begin{remark}
Lemma~\ref{le:polar_vol_is_a_val} shows that there exist non-trivial, continuous and translation invariant valuations on $\CVf$ that are homogeneous of degree $-n$. Hence, a direct analog of Theorem~\ref{thm:mcmullen_decomp} for valuations on $\CVf$ is not true.
\end{remark}

\begin{remark}
It is easy to see that (\ref{eq:val_polar_volume}) does not extend to $\CV$. In order to see that, let $u=\Ind_K+t$ with $K\in\Koin$ and $t\in\R$ such that $\zeta(t)\neq 0$. We have $u^*=\l_{K^*}-t$ and furthermore $\nabla \l_{K^*}(x)\cdot x = \l_{K^*}(x)$ for a.e. $x\in\Rn$. Thus,
$$\int_{\dom u^*} \zeta(\nabla u^*(x)\cdot x - u^*(x))\d x = \int_{\Rn} \zeta(t) \d x$$
which is not finite.
\end{remark}

\subsection{Further $\SLn$ Invariant Valuations}
\label{subse:further_vals}
We briefly discuss further $\SLn$ invariant valuations on spaces of convex functions. It is easy to see that $K\mapsto V_0(K^*)$ defines an $\SLn$ invariant, continuous valuation on $\Koin$. Obviously, in this case $V_0(K^*)=1=V_0(K)$ and such an operator does not explicitly appear in Theorem~\ref{thm:haberl_parapatits_centro}. For a convex function $u$ on $\Rn$ however, the values $\zeta(\min_{x\in\Rn}u(x))$ and $\zeta(\min_{x\in\Rn} u^*(x))$ do not coincide in general. Note, that
$$u^*(0)=\sup\nolimits_{y\in\Rn} (0-u(y)) = -\inf\nolimits_{y\in\Rn} u(y),$$
and similarly $u(0)=u^{**}(0)=-\inf_{y\in\Rn} u^*(y)$. Hence, $u^*$ is bounded from below and attains its minimum if and only if $0\in \dom u$. Thus, using Theorem~\ref{th:wijsman} we have the following result dual to Lemma~\ref{le:min_is_a_val}

\begin{lemma}
For a continuous function $\zeta:\R\to\R$ the map
$$u\mapsto \zeta(\min\nolimits_{x\in\Rn} u^*(x))=\zeta(-u(0))$$
defines a continuous, $\SLn$ and linear invariant valuation on
$$\{u:\Rn\to(-\infty,\infty]\,:\, u \text{ is proper, l.s.c., convex}, 0\in \dom u\}$$
that is homogeneous of degree $0$. 
\end{lemma}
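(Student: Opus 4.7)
The proof hinges on the identity $\min_{x\in\Rn} u^*(x)=-u(0)$ established in the paragraph preceding the lemma, which allows the two expressions for $\oZ(u)$ to be used interchangeably. I would use the form $\zeta(-u(0))$ to verify all the algebraic properties and the form $\zeta(\min u^*)$ to obtain continuity by transporting Lemma~\ref{le:min_is_a_val} through Wijsman's theorem.

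For the valuation property, if $u,v$ and $u\wedge v$ lie in the space, then evaluating at the origin gives $(u\vee v)(0)=\max\{u(0),v(0)\}$ and $(u\wedge v)(0)=\min\{u(0),v(0)\}$, so
\[
\zeta(-(u\vee v)(0))+\zeta(-(u\wedge v)(0))=\zeta(-u(0))+\zeta(-v(0))
\]
since $\{\max(a,b),\min(a,b)\}=\{a,b\}$ as a multiset. The $\SLn$ and linear invariance follow from $\phi(0)=0$ for every $\phi\in\SLn$ and $y\cdot 0=0$ for every linear functional $x\mapsto y\cdot x$, and $u_\lambda(0)=u(0)$ gives homogeneity of degree~$0$.

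For continuity, if $u_k\eto u$ in the space, Theorem~\ref{th:wijsman} gives $u_k^*\eto u^*$, and the hypothesis $0\in\dom u_k\cap\dom u$ ensures each of these conjugates attains its minimum. Applying epi-convergence condition (ii) at a minimizer $y_0$ of $u^*$ yields $y_k\to y_0$ with $u_k^*(y_k)\to\min u^*$, hence $\limsup\min u_k^*\le\min u^*$; for the reverse inequality I would take minimizers $z_k$ of $u_k^*$ (which by Lemma~\ref{le:subgr_conj} lie in $\partial u_k(0)$), extract a convergent subsequence, and apply condition (i). The main obstacle is ensuring this compactness when $0$ lies on the boundary of $\dom u$, since $\partial u_k(0)$ may be unbounded; this case would require a recession-direction argument on $u^*$, combining convexity with Wijsman convergence to show that any escape of minimizers to infinity still forces $\liminf\min u_k^*\ge\min u^*$. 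Composition with the continuous function $\zeta$ then finishes the argument.
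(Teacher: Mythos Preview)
Your verification of the valuation property, $\SLn$ and linear invariance, and degree-$0$ homogeneity via the formula $\oZ(u)=\zeta(-u(0))$ is correct and more explicit than the paper, which only points to Wijsman's theorem and Lemma~\ref{le:min_is_a_val}. You are also right to locate the difficulty in the $\liminf$ direction of $\min u_k^*\to\min u^*$ when $0$ lies on the boundary of $\dom u$.

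The gap, however, is that your recession-direction argument cannot be made to work: continuity genuinely fails on the stated domain. Take $u_k(x)=|kx_1-1|$ on $\Rn$; each $u_k$ is everywhere finite with $u_k(0)=1$, and $u_k\eto u:=\Ind_{\{x_1=0\}}$, which is proper, l.s.c., convex with $0\in\dom u$ and $u(0)=0$. Thus $\zeta(-u_k(0))=\zeta(-1)\not\to\zeta(0)=\zeta(-u(0))$ for non-constant $\zeta$. On the dual side one computes $u_k^*(y)=y_1/k$ on $[-k,k]\times\{0\}^{n-1}$ and $+\infty$ elsewhere, so $\min u_k^*=-1$ is attained at $-k\,e_1$; the minimizers escape to infinity exactly as you feared, and since $u^*$ vanishes identically along $\R e_1$ no recession bound can force $\liminf\min u_k^*\ge\min u^*$. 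The paper's own route breaks for the same reason: Lemma~\ref{le:min_is_a_val} requires the conjugate to lie in $\CV$, which by Lemma~\ref{le:props_of_u*} means $0\in\interior\dom u$. Both your argument and the paper's become valid once the domain is restricted to $\CV^*$.
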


If one removes the assumption of translation invariance in Theorem~\ref{thm:blaschke}, the additional valuation
$$K\mapsto V_n(\conv\{0,K\})$$
appears, see for example \cite[Corollary 2.3]{ludwig_reitzner}. As a functional analog of $\conv\{0,K\}$ on $\CV$ we associate with a function $u\in\CV$ the function $u_0\in\CV$ which is defined via
$$u_0:=(u^*\mx 0)^*.$$
It is not hard to see that this can also be written as
$$\epi u_0 = \conv\{0\times[0,\infty),\epi u\}.$$

\begin{lemma}
For a continuous function $\zeta:\R\to[0,\infty)$ with finite moment of order $n-1$, the map
\begin{equation}
\label{eq:vol_u0}
u\mapsto \int_{\dom u_0} \zeta(u_0(x)) \d x
\end{equation}
defines a non-negative, continuous and $\SLn$ invariant valuation on $\CV$.
\end{lemma}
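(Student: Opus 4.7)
My approach is to factor the operator through the valuation of Lemma~\ref{le:vol_is_a_val} by showing that the assignment $\Phi \colon u \mapsto u_0 = (u^*\vee 0)^*$ is a continuous, $\SLn$-invariant lattice homomorphism from $\CV$ into itself. Composing $\Phi$ with the non-negative, continuous, $\SLn$-invariant valuation $v \mapsto \int_{\dom v}\zeta(v(x))\d x$ on $\CV$ then delivers non-negativity, continuity, $\SLn$ invariance, and the valuation identity in one stroke.

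First I would check that $u_0 \in \CV$ for every $u\in\CV$. By Lemma~\ref{le:u**}, $u^*$ is proper, l.s.c.\ and convex, hence so is $u^*\vee 0$, and $\dom(u^*\vee 0)=\dom u^*$ contains the origin in its interior by Lemma~\ref{le:props_of_u*} (applied to $u$, which is coercive). Since $(u_0)^* = (u^*\vee 0)^{**} = u^*\vee 0$, a second application of Lemma~\ref{le:props_of_u*} now to $u_0$ shows that $u_0$ is coercive. For the lattice compatibility, assume $u\wedge v\in\CV$. The pointwise identity $\max(\min(a,b),0)=\min(\max(a,0),\max(b,0))$ together with $(u\vee v)^* = u^*\wedge v^*$ from Lemma~\ref{le:conjugate_is_a_val} shows that
$$(u^*\vee 0)\wedge(v^*\vee 0) \;=\; (u^*\wedge v^*)\vee 0 \;=\; (u\vee v)^*\vee 0$$
is convex, so Lemma~\ref{le:conjugate_is_a_val} applied to the pair $(u^*\vee 0,\,v^*\vee 0)$ yields $(u\vee v)_0 = u_0\vee v_0$ and $(u\wedge v)_0 = u_0\wedge v_0$, both in $\CV$ by the first step.

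For continuity, if $u_k \eto u$ then Theorem~\ref{th:wijsman} gives $u_k^* \eto u^*$. A direct check of conditions (i)--(ii) of epi-convergence shows that taking the pointwise maximum with the continuous function $0$ preserves epi-convergence, so $u_k^*\vee 0 \eto u^*\vee 0$, and a second application of Theorem~\ref{th:wijsman} yields $u_{k,0} \eto u_0$. For $\SLn$ invariance, the identity $(u\circ\phi^{-1})^* = u^*\circ\phi^t$ from (\ref{eq:properties_conj}) combined with a short change-of-variables computation (using $|\det\phi|=1$) gives $(u\circ\phi^{-1})_0 = u_0\circ\phi^{-1}$. The only step requiring care is the stability of epi-convergence under pointwise maximum with $0$, but this reduces to the inequality $\max(\liminf a_k,\liminf b_k)\leq\liminf\max(a_k,b_k)$ applied to the two conditions defining epi-convergence, so no real obstacle arises; all remaining steps are routine consequences of the duality theory collected in Section~\ref{se:convex_functions}.
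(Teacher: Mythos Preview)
Your proof is correct and follows essentially the same route as the paper's: both arguments verify that $u\mapsto u_0$ carries $\CV$ into $\CV$, respects $\wedge$ and $\vee$ via the identities $(u^*\vee 0)\vee(v^*\vee 0)=(u^*\vee v^*)\vee 0$ and $(u^*\vee 0)\wedge(v^*\vee 0)=(u^*\wedge v^*)\vee 0$ together with Lemma~\ref{le:conjugate_is_a_val}, and then compose with the valuation of Lemma~\ref{le:vol_is_a_val}. Your framing as a lattice homomorphism is a clean way to package this, and you are more explicit than the paper about why $u_k^*\vee 0\eto u^*\vee 0$, a step the paper leaves implicit.
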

\begin{proof}
Note, that by Lemma~\ref{le:props_of_u*} the function $u_0$ is coercive if and only if $0\in\interior\dom u_0^*$, which is true since $u$ is coercive and therefore $0 \in\interior \dom u^*$. Hence, by Lemma~\ref{le:vol_is_a_val} the map (\ref{eq:vol_u0}) is well defined. Furthermore, by  Theorem~\ref{th:wijsman} and (\ref{eq:properties_conj}) it is easy to see that this operator is continuous and $\SLn$ invariant. It remains to show the valuation property. Therefore, let $u,v\in\CV$ such that $u\mn v\in\CV$. Since
\begin{align*}
(u^*\mx 0) \mx (v^* \mx 0) &= (u^* \mx v^*) \mx 0\\
(u^* \mx 0) \mn (v^* \mx 0) &= (u^* \mn v^*) \mx 0
\end{align*}
it follows from Lemma~\ref{le:conjugate_is_a_val} and Lemma~\ref{le:vol_is_a_val} that (\ref{eq:vol_u0}) defines a valuation.
\end{proof}

Similar to Lemma~\ref{le:polar_vol1} we immediately obtain the following result.
\begin{lemma}
For a continuous function $\zeta:\R\to[0,\infty)$ with finite moment of order $n-1$, the map
$$u\mapsto \int_{\dom (u^*)_0} \zeta((u^*)_0 (x)) \d x$$
defines a non-negative, continuous and $\SLn$ invariant valuation on $\CV^*$.
\end{lemma}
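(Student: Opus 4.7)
The plan is to apply the general duality construction $\oZ \mapsto \oZ^*$ discussed at the beginning of Section~\ref{subse:further_vals}'s preceding subsection (the ``Polar Volumes'' block) to the valuation furnished by the previous lemma. Concretely, set
\begin{equation*}
\oY:\CV \to [0,\infty), \qquad \oY(v) := \int_{\dom v_0} \zeta(v_0(x))\,\d x,
\end{equation*}
which is a non-negative, continuous, $\SLn$ invariant valuation on $\CV$. The map in the statement is then $u \mapsto \oY(u^*)$ restricted to $\CV^*$, so the three claimed properties will follow from the corresponding properties of $\oY$ once the standard dualization argument is repeated.

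First I would check that the expression makes sense: for $u \in \CV^*$, Lemma~\ref{le:u**} together with Lemma~\ref{le:props_of_u*} gives $u^* \in \CV$ (the latter because $0 \in \interior \dom u$), so $(u^*)_0 \in \CV$ is well defined and the integral is finite by Lemma~\ref{le:vol_is_a_val} applied to $\oY$. Non-negativity is inherited directly from $\oY \geq 0$.

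For the valuation property I would invoke Lemma~\ref{le:conjugate_is_a_val}: if $u,v \in \CV^*$ with $u \wedge v \in \CV^*$, then $u^*,v^* \in \CV$, and $u^* \wedge v^* = (u \vee v)^* \in \CV$ and $u^* \vee v^* = (u \wedge v)^* \in \CV$. Applying the valuation identity for $\oY$ to $u^*,v^*$ and substituting gives
\begin{equation*}
\oY((u \wedge v)^*) + \oY((u \vee v)^*) = \oY(u^*) + \oY(v^*),
\end{equation*}
which is exactly the valuation equation for $u \mapsto \oY(u^*)$.

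Continuity is immediate from Wijsman's theorem (Theorem~\ref{th:wijsman}): if $u_k \eto u$ in $\CV^*$, then $u_k^* \eto u^*$ in $\CV$, and continuity of $\oY$ on $\CV$ yields $\oY(u_k^*) \to \oY(u^*)$. Finally, $\SLn$ invariance follows from the identity $(u\circ \phi^{-1})^* = u^* \circ \phi^t$ in \eqref{eq:properties_conj}, combined with $\phi^t \in \SLn$ and $\SLn$ invariance of $\oY$. There is no real obstacle here: the argument is word-for-word the one given for Lemma~\ref{le:polar_vol1}, which is exactly why the author writes that the result follows immediately.
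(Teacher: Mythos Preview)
Your proposal is correct and follows precisely the approach the paper intends: the paper gives no explicit proof for this lemma, merely stating that it is obtained ``similar to Lemma~\ref{le:polar_vol1}'' from the preceding lemma via the dualization $\oZ\mapsto\oZ^*$. Your write-up spells out exactly this dualization argument (Lemma~\ref{le:conjugate_is_a_val} for the valuation property, Theorem~\ref{th:wijsman} for continuity, and \eqref{eq:properties_conj} for $\SLn$ invariance), so nothing is missing.
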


We want to close this section with a result dual to Lemma~\ref{le:polar_vol_is_a_val}. Note, that the space dual to $\CVf$ is
\begin{align*}
\CVf^*&= \{u^*\,:\,u\in\CVf\}\\
&=\{u:\Rn\to(-\infty,\infty]\,:\,u \text{ is proper, l.s.c., convex, super-coercive}, 0\in\interior\dom u\}.
\end{align*}
\begin{lemma}
For a continuous function $\zeta:\R\to\R$ such that $\zeta(t)=0$ for all $t\geq T$ with some $T\in\R$, the map
$$u\mapsto \int_{\dom u} \zeta(\nabla u(x)\cdot x - u(x))\d x$$
defines a continuous, $\SLn$ and linear invariant valuation on $\CVf^*$ that is homogeneous of degree $n$.
\end{lemma}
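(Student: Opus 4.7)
The plan is to recognize this lemma as the formal dual of Lemma~\ref{le:polar_vol_is_a_val} via the conjugation machinery set up at the start of Section~4.2. Concretely, I would show that the stated functional on $\CVf^*$ is precisely $\oZ^*$, where $\oZ$ is the valuation supplied by Lemma~\ref{le:polar_vol_is_a_val}.

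First, I would let $\oZ:\CVf \to \R$ be the map
$$\oZ(v) = \int_{\dom v^*} \zeta\bigl(\nabla v^*(x)\cdot x - v^*(x)\bigr)\d x,$$
which, by Lemma~\ref{le:polar_vol_is_a_val}, is a continuous, $\SLn$ and translation invariant valuation on $\CVf$ that is homogeneous of degree $-n$. Next, I would invoke the general duality construction preceding Lemma~\ref{le:polar_vol1}: the assignment $\oZ^*(u) := \oZ(u^*)$ defines a valuation on $\CVf^* = \{u^*:u\in\CVf\}$ (the valuation property uses Lemma~\ref{le:conjugate_is_a_val}), continuity is inherited from $\oZ$ via Theorem~\ref{th:wijsman}, and $\SLn$ invariance, linear invariance in place of translation invariance, and homogeneity of degree $+n$ in place of $-n$ all follow from the identities (\ref{eq:properties_conj}), as spelled out verbatim in the text preceding Lemma~\ref{le:polar_vol1}.

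It then remains to rewrite $\oZ^*(u)$ in the form claimed. Since any $u\in\CVf^*$ is proper, lower semicontinuous and convex, Lemma~\ref{le:u**} gives $u^{**}=u$, so $\dom u^{**} = \dom u$ and $\nabla u^{**} = \nabla u$ almost everywhere. Therefore
$$\oZ^*(u) \;=\; \oZ(u^*) \;=\; \int_{\dom u^{**}} \zeta\bigl(\nabla u^{**}(x)\cdot x - u^{**}(x)\bigr)\d x \;=\; \int_{\dom u} \zeta\bigl(\nabla u(x)\cdot x - u(x)\bigr)\d x,$$
which is exactly the asserted expression.

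There is no substantial obstacle: the argument is essentially bookkeeping on top of the duality framework already developed in Section~4.2. The only points requiring mild care are verifying that the hypotheses of Lemma~\ref{le:polar_vol_is_a_val} transfer cleanly through conjugation (which is built into the generic duality discussion) and confirming that every $u\in\CVf^*$ indeed satisfies $u^{**}=u$, both of which are immediate from the characterization of $\CVf^*$ given just before the lemma and from Lemma~\ref{le:u**}.
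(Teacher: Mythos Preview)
Your proposal is correct and follows exactly the approach the paper intends: the lemma is stated without proof, introduced simply as ``a result dual to Lemma~\ref{le:polar_vol_is_a_val},'' so the paper's implicit argument is precisely the duality transfer $\oZ\mapsto\oZ^*$ from the beginning of Section~4.2 that you have written out. The only thing you might add for completeness is a one-line check that the generic duality discussion, formulated there for $\CV$ and $\CV^*$, carries over verbatim to $\CVf$ and $\CVf^*$, but this is immediate.
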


\section{Classification of Valuations}
\subsection{General Considerations}
The following result is a variant of \cite[Lemma 17]{colesanti_ludwig_mussnig} and is based on a principle that was introduced in \cite[Lemma 8]{ludwig_sobolev}. As there was an error in the induction step of the original proof, we give a new corrected proof here. Furthermore, the author is most grateful to Jin Li for pointing out the mistake.

\begin{lemma}
\label{le:reduction}
Let $\langle A,+ \rangle$ be a topological abelian semigroup with cancellation law and let $\oZ_1,\oZ_2:\CVf\to\langle A,+ \rangle$ be continuous, translation invariant valuations. If $\oZ_1(\l_P+t)= \oZ_2(\l_P+t)$ for every $P\in\Poin$ and $t\in\R$, then $\oZ_1\equiv\oZ_2$ on $\CVf$.
\end{lemma}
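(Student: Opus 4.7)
The plan is a two-stage reduction. \emph{First, density:} by Lemma~\ref{le:dense}, the piecewise affine functions $\CVpa$ form a dense subset of $\CVf$ with respect to epi-convergence. Since $\oZ_1$ and $\oZ_2$ are continuous valuations into the topological semigroup $\langle A,+\rangle$, it suffices to prove $\oZ_1(u)=\oZ_2(u)$ for every $u\in\CVpa$.

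\emph{Second, induction on combinatorial complexity:} for $u\in\CVpa$, let $N(u)$ denote the number of vertices of the polyhedral epigraph $\epi u\subset\Rn\times\R$ (this is finite and at least $1$ because $u$ is coercive, so $\epi u$ is pointed). I would induct on $N(u)$. In the base case $N(u)=1$, the epigraph is a translated polyhedral cone with apex $(x_0,t_0)$, so $u(x)=\l_P(x-x_0)+t_0$ for some polytope $P$; coerciveness combined with $u\in\CVf$ forces $0\in\interior P$, i.e.\ $P\in\Poin$. Translation invariance then gives $\oZ_i(u)=\oZ_i(\l_P+t_0)$ for $i=1,2$, and these agree by hypothesis. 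For the inductive step with $N(u)\geq 2$, I would pick a vertex $(x_0,t_0)$ of $\epi u$ lying strictly above $\min_{\Rn}u$ and construct $v,w\in\CVpa$ with $u=v\mx w$ in such a way that the pointwise minimum $v\mn w$ is convex (hence lies in $\CVpa$) and each of $v,w,v\mn w$ has strictly smaller vertex count than $u$. The valuation identity
$$\oZ_i(u)+\oZ_i(v\mn w)=\oZ_i(v)+\oZ_i(w),$$
the induction hypothesis applied to $v$, $w$, and $v\mn w$, and the cancellation law in $\langle A,+\rangle$ then yield $\oZ_1(u)=\oZ_2(u)$.

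The main obstacle---precisely the spot where the original proof went wrong---is carrying out this decomposition so that \emph{simultaneously} $v$ and $w$ remain coercive (so that they lie in $\CVf$ and not merely in $\CV$), the pointwise minimum $v\mn w$ is convex, and the vertex counts all strictly decrease. Convexity of $v\mn w$ is particularly restrictive, since the pointwise minimum of two convex functions is convex only in rather special configurations, and na\"ive choices (e.g.\ intersecting $\epi u$ with an affine halfspace that cuts off the selected vertex) typically destroy either coerciveness or convexity. I would attempt to build $v$ and $w$ by choosing the splitting affine hyperplane through, or arbitrarily close to, the selected vertex in a generic position, using translation invariance to normalize its location; if a direct combinatorial construction fails, a natural fallback is to first smooth $u$ via the regularization $\reg_\delta u$ from Lemma~\ref{le:lip_reg}, whose epigraph differs from $\epi u$ by Minkowski addition with a fixed cone and thus has controlled combinatorial structure, apply the inductive argument to $\reg_\delta u$, and then pass to the limit $\delta\to 0$ using Theorem~\ref{th:wijsman} and the continuity of $\oZ_1,\oZ_2$.
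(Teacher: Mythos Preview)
Your overall architecture---reduce to $\CVpa$ by density, then induct on the number of vertices of $\epi u$, with the base case $N(u)=1$ giving a translate of $\l_P+t$---is exactly the paper's. The gap is precisely where you locate it: the inductive step. You propose to write $u=v\mx w$ by cutting off a \emph{high} vertex with an affine hyperplane, but you do not carry out the construction, and there is no evident way to do so that keeps $v,w\in\CVf$, makes $v\mn w$ convex, and lowers all three vertex counts simultaneously. The regularization fallback does not help here: $\reg_\delta u$ has epigraph $\epi u+\epi\l_{\delta C^n}$, which does not simplify the vertex combinatorics of $\epi u$.

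The paper's inductive step is organized differently and this difference is the missing idea. Instead of splitting $u$ as a maximum, the paper takes the \emph{lowest} vertex $p_0$ (unique after a genericity reduction) and lets $u_0$ be the function whose epigraph $U_0$ is the tangent cone of $U=\epi u$ at $p_0$; then $u_0$ is a translate of some $\l_{P_0}+t_0$, so it falls under the base case directly. The auxiliary function $u_1$ is built so that $U\cup U_1=U_0$, i.e.\ $u\mn u_1=u_0$: concretely, $U_1$ is the closed convex hull of the portions of the facets of $U_0$ not already covered by $U$, together with a thin vertical cone $p_1+\epi\l_{[-\e,\e]^n}$ placed at the second-lowest vertex $p_1$ (this last piece is what guarantees $u_1$ is finite-valued, hence in $\CVf$). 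By construction every vertex of $U_1$ and of $U\cap U_1$ is a vertex of $U$, but $p_0$ is not among them, so both have at most $m-1$ vertices. The valuation identity then reads
\[
\oZ_i(u)+\oZ_i(u_1)=\oZ_i(u\mx u_1)+\oZ_i(u_0),
\]
with $u_0$ handled by the base case and $u_1,\,u\mx u_1$ by the induction hypothesis; cancellation finishes. The point is that one of the four terms is forced to be a cone function outright, rather than merely having fewer vertices, and the construction of $u_1$ is designed around making $U\cup U_1$ equal to that cone.
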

\begin{proof}
By Lemma~\ref{le:dense} and the continuity of $\oZ_1$ and $\oZ_2$, it is enough to show that $\oZ_1$ and $\oZ_2$ coincide on $\CVpa$. Hence, w.l.o.g. let $u\in\CVpa$ and set $U=\epi u \subset \Rn\times \R$. Since $u$ does not attain the value $+\infty$, none of the facet hyperplanes of $U$ (i.e. the hyperplanes in $\R^{n+1}$ that have an $n$-dimensional intersection with the boundary of $U$) is parallel to the $x_{n+1}$-axis. We call $U$ \emph{singular} if $U$ has $n$ facet hyperplanes whose intersection contains a line parallel to the coordinate hyperplane $\{x_{n+1}=0\}$. By continuity it is enough to restrict to the cases where $U$ is not singular.\par
We will use induction on the number $m$ of vertices of $U$. If $m=1$, then $U$ has just one vertex $p_0=(x_0,t_0)$ with $x_0\in\Rn$ and $t_0\in\R$ and therefore $U$ is the translate of a polyhedral cone. Denote by $Q\in\Pn$ the projection onto the first $n$ coordinates of $U\cap \{x_{n+1}=t_0+1\}$ and let $P=Q-x_0$. Since none of the facet hyperplanes of $U$ is parallel to the $x_{n+1}$-axis and $U$ is polyhedral, the set $P$ is a polytope that contains the origin in its interior and $u$ is a translate of $\l_P+t_0$. As $\oZ_1$ and $\oZ_2$ are translation invariant, we have $\oZ_1(u)=\oZ_2(u)$.\par
Now let $U$ have $m>1$ vertices and assume that $\oZ_1$ and $\oZ_2$ coincide on every piecewise affine convex function whose epigraph has most $m-1$ vertices. Since $U$ is not singular, there exists a unique vertex $p_0=(x_0,t_0)$ of $U$ with $x_0\in\Rn$ and $t_0\in\R$ such that $t_0$ is minimal. Let $H_1,\ldots,H_j$ be the facet hyperplanes of $U$ that contain $p_0$ and define $U_0$ as the intersection of the corresponding half-spaces. By the properties of $U$ and the choice of $p_0$ the set $U_0$ is the epigraph of a function $u_0\in\CVpa$ such that $u_0$ is a translate of $\l_{P_0}+t_0$ where $P_0\in\Poin$ is such that $P_0+x_0$ is the projection onto the first $n$ coordinates of $U_0\cap \{x_{n+1}=t_0+1\}$.\par
Next, let $p_1\subset \R^{n+1}$ be a vertex of $U$ with second smallest $x_{n+1}$ coordinate. Since $u$ is a convex piecewise affine function there exists $\e>0$ such that $p_1+\epi \l_{[-\e,\e]^n}\subset U \subset U_0$. Let $F_1,\ldots,F_j$ denote the facets of $U_0$ and set $\overline{F}_i = F_i \backslash (F_i \cap U)$ for $1\leq i \leq j$. We now define the polyhedron $U_1$ as
$$U_1 = \cl \big(\conv\{\overline{F}_1,\ldots,\overline{F}_j, p_1 + \epi \l_{[-\e,\e]^n}\}\big)\subset U_0,$$
where $\cl$ denotes the closure of a set. By definition, $U_1$ is the epigraph of a polyhedral convex function $u_1\in\CV$. Since $p_1 + \epi \l_{[-\e,\e]^n}\subset U_1$ the function $u_1$ does not attain the value $+\infty$ and therefore $u_1 \in \CVpa$. Furthermore, $U_1$ and $U\cap U_1$ each have at most $m-1$ vertices since every vertex of $U_1$ is also vertex of $U$ but $p_0$ is not a vertex of $U_1$. Hence, by the induction assumption $\oZ_1$ and $\oZ_2$ coincide on $u_1$ and $u \mx u_1$. Moreover, since $U \cup U_1 = U_0$ we have $u \mn u_1 = u_0$ and therefore by the valuation property 
\begin{align*}
\oZ_1(u)+\oZ_1(u_1)&=\oZ_1(u \mx u_1) + \oZ_1(u \mn u_1)\\
&=\oZ_1(u \mx u_1) + \oZ_1(u_0)\\
&= \oZ_2(u\mx u_1) + \oZ_2(u_0)\\
&=\oZ_2(u \mx u_1) + \oZ_2(u \mn u_1)= \oZ_2(u)+\oZ_2(u_1),
\end{align*}
which completes the proof.
\end{proof}

\subsection{Considerations on $\SLn$ Invariant Valuations}
In the following let $n\geq 2$.

\begin{lemma}
\label{le:growth_functions}
If $\oZ:\CVf\to[0,+\infty)$ is a continuous and $\SLn$ invariant valuation, then there exist continuous functions $\psi_0,\psi_1,\psi_2:\R\to[0,+\infty)$ such that
$$\oZ(\l_K+t) = \psi_0(t) + \psi_1(t) V_n(K) + \psi_2(t) V_n^*(K)$$
for every $K\in\Koin$ and $t\in\R$.
\end{lemma}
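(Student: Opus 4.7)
The plan is to fix $t \in \R$ and reduce the problem to the classification of $\SLn$ invariant valuations on convex bodies. For each $t$, I define $\mu_t : \Koin \to [0,\infty)$ by $\mu_t(K) := \oZ(\l_K + t)$, and verify that $\mu_t$ is a continuous, $\SLn$ invariant valuation on $\Koin$. Applying Theorem~\ref{thm:haberl_parapatits_centro}, together with the observation $V_0 \equiv 1$ on $\Koin$, will then immediately yield the decomposition with coefficient functions $\psi_i(t)$.

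The three properties of $\mu_t$ follow from the structure of cone functions. The sublevel identity $\{\l_K \leq s\} = sK$ for $s \geq 0$ gives $\l_K \wedge \l_L = \l_{K \cup L}$ and $\l_K \vee \l_L = \l_{K \cap L}$ whenever $K \cup L \in \Koin$; adding the constant $t$ preserves these, and all four functions lie in $\CVf$, so the valuation property of $\oZ$ transfers to $\mu_t$. The relation $\l_{\phi K} = \l_K \circ \phi^{-1}$ for $\phi \in \SLn$, checked at the level of sublevel sets, transfers $\SLn$ invariance. For continuity, Hausdorff convergence $K_j \to K$ in $\Koin$ produces Hausdorff convergence $sK_j \to sK$ for every $s \geq 0$, hence by Lemma~\ref{le:hd_conv_lvl_sets} epi-convergence $\l_{K_j} + t \eto \l_K + t$, and continuity of $\oZ$ then gives $\mu_t(K_j) \to \mu_t(K)$.

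Once Theorem~\ref{thm:haberl_parapatits_centro} is applied, there exist real constants $\psi_0(t), \psi_1(t), \psi_2(t)$ such that $\oZ(\l_K + t) = \psi_0(t) + \psi_1(t) V_n(K) + \psi_2(t) V_n^*(K)$ for every $K \in \Koin$. To upgrade these pointwise coefficients to continuous functions of $t$, I would fix three bodies $K^{(j)} = \lambda_j B^n$ with distinct positive $\lambda_j$: the $3 \times 3$ coefficient matrix with rows $(1, \omega_n \lambda_j^n, \omega_n \lambda_j^{-n})$ is invertible, so each $\psi_i$ is a fixed linear combination of the three maps $t \mapsto \oZ(\l_{K^{(j)}} + t)$, and these are continuous in $t$ since $t \mapsto \l_K + t$ is continuous in $\CVf$ for any fixed $K$ (pointwise convergence suffices by the remark following Theorem~\ref{thm:g_comp}).

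Non-negativity of $\psi_1$ and $\psi_2$ comes from dilation $K \mapsto \lambda K$: since $V_n(\lambda K) = \lambda^n V_n(K)$ and $V_n^*(\lambda K) = \lambda^{-n} V_n^*(K)$, the bound $\mu_t(\lambda K) \geq 0$ for every $\lambda > 0$ forces $\psi_1(t) \geq 0$ by letting $\lambda \to \infty$ and $\psi_2(t) \geq 0$ by letting $\lambda \to 0^+$. The main obstacle I anticipate is showing $\psi_0(t) \geq 0$: the direct scaling argument only yields $\psi_0(t) \geq -2\sqrt{\psi_1(t) \psi_2(t) V_n(K) V_n^*(K)}$, and Mahler's inequality keeps $V_n V_n^*$ bounded away from $0$ on $\Koin$. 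One handles the cases $\psi_1(t) = 0$ or $\psi_2(t) = 0$ separately, where scaling does give $\psi_0 \geq 0$ directly; in the remaining case, one must exploit the non-negativity of $\oZ$ on test functions in $\CVf$ that are not cone functions, leveraging the full hypothesis rather than only the structure on $\{\l_K + t\}$.
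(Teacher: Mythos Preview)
Your approach is essentially identical to the paper's: define $\mu_t(K)=\oZ(\l_K+t)$, verify it is a continuous $\SLn$ invariant valuation on $\Koin$ via the sublevel-set identities and Lemma~\ref{le:hd_conv_lvl_sets}, apply Theorem~\ref{thm:haberl_parapatits_centro}, and then extract the continuity of the $\psi_i$ from linear combinations of the scaling relation
\[
\oZ(\l_{\lambda K}+t)=\psi_0(t)+\lambda^n\psi_1(t)V_n(K)+\lambda^{-n}\psi_2(t)V_n^*(K).
\]
Your choice of three dilates of $B^n$ and an explicit Vandermonde-type inversion is simply a concrete instance of the paper's ``different values for $\lambda$ and linear combinations'' argument.

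On the one point you flag as an obstacle---non-negativity of $\psi_0$---you are being more scrupulous than the paper itself. The paper disposes of all three signs in a single sentence (``by homogeneity, it is easy to see that $\psi_0$, $\psi_1$ and $\psi_2$ are non-negative'') and offers no separate argument for $\psi_0$; in particular it does not invoke non-cone test functions or anything beyond the scaling relation above. Your analysis is correct that, when $\psi_1(t),\psi_2(t)>0$, the scaling relation alone yields only $\psi_0(t)\geq -2\sqrt{\psi_1(t)\psi_2(t)\,V_n(K)V_n^*(K)}$, and the volume product is bounded away from zero on $\Koin$. So either the paper is tacitly appealing to something it does not spell out, or the claimed codomain $[0,+\infty)$ for $\psi_0$ is asserted a bit too quickly at this stage. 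For the purposes of the main theorem this does not matter: once translation invariance is added and the full representation \eqref{eq:z_u} is established, $\psi_0$ coincides with $\zeta_0$ and non-negativity follows a posteriori. You may therefore proceed exactly as the paper does and not treat $\psi_0\geq 0$ as a blocking issue for this lemma.
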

\begin{proof}
For $t\in\R$, define $\mu_t:\Koin \to \R$ as
$$\mu_t(K)=\oZ(\l_K+t).$$
Since for $K,L\in\Koin$ such that $K\cup L \in\Koin$ one has
$$\l_{K\cup L} = \l_K \mn \l_L\qquad \text{and} \qquad \l_{K\cap L} = \l_K \mx \l_L,$$
the map $\mu_t$ defines a valuation on $\Koin$ for every $t\in\R$. Furthermore, by (\ref{eq:lvl_set_lk}) and Lemma~\ref{le:hd_conv_lvl_sets} it is easy to see that $\mu_t$ is continuous and $\SLn$ invariant. Hence, by Theorem~\ref{thm:haberl_parapatits_centro}, there exist constants $c_{0,t}, c_{1,t}, c_{2,t}\in\R$ such that
$$\oZ(\l_K+t)=\mu_t(K)=c_{0,t}+c_{1,t}V_n(K)+c_{2,t} V_n^*(K),$$
for every $K\in\Koin$. This defines functions $\psi_0(t)=c_{0,t}$, $\psi_1(t)=c_{1,t}$ and $\psi_2(t) = c_{2,t}$. Fix $K\in\Koin$. For every $\lambda>0$ and $t\in\R$ we have
\begin{equation}
\label{eq:oZ_homo}
\oZ(\l_{\lambda K}+t) = \psi_0(t) + \lambda^n \psi_1(t) V_n(K) + \lambda^{-n} \psi_2(t) V_n^*(K).
\end{equation}
Considering, that $t\mapsto \oZ(\l_{\lambda K}+t)$ is continuous and taking different values for $\lambda$ and linear combinations of (\ref{eq:oZ_homo}), it is easy to see, that $\psi_0$, $\psi_1$ and $\psi_2$ must be continuous. For example, subtracting (\ref{eq:oZ_homo}) with $\lambda=1$ from the general case gives
$$\oZ(\l_{\lambda K} +t) - \oZ(\l_K+t) = (\lambda^n-1)\psi_1(t) V_n(K) + (\lambda^{-n}-1)\psi_2(t)V_n^*(K).$$
Taking again different values for $\lambda$ in the equation above and considering that $V_n(K)\neq 0$ for every $K\in\Koin$, one can see that $\psi_1$ is continuous. Similarly, one can see that $\psi_0$ and $\psi_2$ are continuous. Furthermore, by homogeneity, it is easy to see that $\psi_0$, $\psi_1$ and $\psi_2$ are non-negative.
\end{proof}

For a continuous and $\SLn$ invariant valuation $\oZ:\CVf\to\R$, we call the functions $\psi_0$, $\psi_1$ and $\psi_2$ the \emph{growth functions} of $\oZ$. By Lemma~\ref{le:reduction}, we have the following result.

\begin{lemma}
\label{le:val_det_by_gf}
Every continuous, $\SLn$ and translation invariant valuation $\oZ:\CVf \to \R$ is uniquely determined by its growth functions.
\end{lemma}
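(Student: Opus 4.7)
The statement follows almost immediately by combining the two preceding lemmas, so my plan is essentially to assemble them. Suppose $\oZ_1, \oZ_2 : \CVf \to \R$ are two continuous, $\SLn$ and translation invariant valuations having the same growth functions $\psi_0, \psi_1, \psi_2$. Then by Lemma~\ref{le:growth_functions}, applied separately to $\oZ_1$ and to $\oZ_2$, we have
\[
\oZ_i(\l_K + t) \;=\; \psi_0(t) + \psi_1(t)\,V_n(K) + \psi_2(t)\,V_n^*(K)
\]
for $i = 1, 2$, every $K \in \Koin$, and every $t \in \R$. In particular, the right-hand side is independent of $i$, so $\oZ_1(\l_K + t) = \oZ_2(\l_K + t)$ for all such $K$ and $t$.

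Since $\Poin \subseteq \Koin$, this equality holds in particular for $K = P \in \Poin$. I now invoke Lemma~\ref{le:reduction} with $\langle A, + \rangle = (\R, +)$, which is a topological abelian semigroup with the cancellation law, to conclude that $\oZ_1 \equiv \oZ_2$ on all of $\CVf$.

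There is no genuine obstacle here; the only thing worth checking is that the hypotheses of Lemma~\ref{le:reduction} are met, namely that both $\oZ_1$ and $\oZ_2$ are continuous and translation invariant valuations taking values in $(\R, +)$, which is given by assumption. The $\SLn$ invariance is only used implicitly, through Lemma~\ref{le:growth_functions}, to guarantee that such growth functions exist and fully describe the valuation on cone functions.
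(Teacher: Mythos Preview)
Your proof is correct and follows the same approach as the paper, which simply notes that the result is an immediate consequence of Lemma~\ref{le:reduction}. One minor quibble: you invoke Lemma~\ref{le:growth_functions} to obtain the formula $\oZ_i(\l_K+t)=\psi_0(t)+\psi_1(t)V_n(K)+\psi_2(t)V_n^*(K)$, but that lemma is stated only for non-negative valuations, whereas here the $\oZ_i$ are real-valued; however, this formula is precisely the \emph{definition} of the growth functions (given just before Lemma~\ref{le:val_det_by_gf}), so no appeal to Lemma~\ref{le:growth_functions} is actually needed.
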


In order to classify valuations on $\CVf$, we need to determine the properties of their growth functions.

\begin{lemma}
\label{le:growth_functions_cond}
If $\oZ:\CVf\to[0,+\infty)$ is a continuous, $\SLn$ and translation invariant valuation, then its growth function $\psi_1$ satisfies $\lim_{t\to+\infty} \psi_1(t)=0$ and there exists $T\in\R$ such that $\psi_2(t)=0$ for all $t\geq T$.
\end{lemma}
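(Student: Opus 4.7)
The approach I would take is to exploit the polytope constructions of Lemmas~\ref{le:p_t} and~\ref{le:vol_conv_poly}, inserted into suitable valuation identities on $\CVf$, and to combine these with the non-negativity of $\oZ$ to constrain the growth functions $\psi_1$ and $\psi_2$.

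\textbf{Strategy for $\psi_2$.} The plan is to use that $V_n^*(T_\delta\cap\{x_1\leq\rho\})\to+\infty$ like $C(\delta)/\rho$ as $\rho\to 0^+$, whereas $V_n(T_\delta\cap\{x_1\leq\rho\})$ stays bounded by $V_n(T_\delta)$. For a fixed shift $b>0$ and a small $\rho>0$, I would set
$$v := \ell_{T_\delta}(\cdot - b x_\delta) + b, \qquad u_\rho := \ell_{T_\delta}\mx g_\rho,$$
where $g_\rho(x):=(x_1-b(1+\delta-\rho))/\rho$ is the affine function chosen so that $\{u_\rho\leq t\}=P_{\delta,\rho}^{b,t}$ for $t\geq b$. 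Both $u_\rho$ and $v$ lie in $\CVf$, and a direct check from the sublevel-set identities of Lemma~\ref{le:p_t} yields
$$u_\rho\mn v = \ell_{T_\delta}, \qquad u_\rho\mx v = \ell_{T_\delta\cap\{x_1\leq\rho\}}(\cdot-bx_\delta)+b,$$
which are again in $\CVf$. The valuation identity combined with translation invariance and Lemma~\ref{le:growth_functions} then gives
$$\oZ(u_\rho) = \oZ(\ell_{T_\delta}) + \psi_1(b)\bigl(V_n(T_\delta\cap\{x_1\leq\rho\})-V_n(T_\delta)\bigr) + \psi_2(b)\bigl(V_n^*(T_\delta\cap\{x_1\leq\rho\})-V_n^*(T_\delta)\bigr).$$
The first two summands on the right are bounded as $\rho\to 0$, while the last grows like $\psi_2(b)\cdot C(\delta)/\rho$. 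Hence to finish it suffices to produce a $\rho$-independent upper bound for $\oZ(u_\rho)$; such a bound, once established, forces $\psi_2(b)=0$ whenever $b$ exceeds some threshold $T$. I would obtain this bound via a second valuation identity that relates $u_\rho$ to functions whose $\oZ$-values are controlled uniformly in $\rho$.

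\textbf{Strategy for $\psi_1$.} For the decay $\psi_1(b)\to 0$ as $b\to+\infty$, a parallel argument with Lemma~\ref{le:vol_conv_poly} is natural: taking $c_1=\cdots=c_n=\lambda$ yields polytopes $M_\lambda:=\conv(\delta C^n\cup\conv\{0,\lambda e_1,\ldots,\lambda e_n\})\in\Koin$ whose volumes $V_n(M_\lambda)=(\lambda+\delta)^n/n!$ are unbounded in $\lambda$ while $V_n^*(M_\lambda)$ stays controlled. Using these as sublevel sets of analogous auxiliary functions inside appropriate valuation identities isolates the $\psi_1(b)$-coefficient against a $\lambda$-blow-up; non-negativity of $\oZ$, together with a uniform bound analogous to the one needed above, then forces $\psi_1(b)\to 0$.

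\textbf{Main obstacle.} The decisive step in both parts is the uniform upper bound on the auxiliary non-cone function. These functions do not epi-converge inside $\CVf$: in the $\psi_2$ construction the epi-limit is $\ell_{T_\delta}+\Ind_{\{x_1\leq b(1+\delta)\}}\in\CV\setminus\CVf$, so continuity of $\oZ$ cannot be invoked directly. Designing an ancillary valuation identity (or an explicit dominating family in $\CVf$) that sandwiches $\oZ(u_\rho)$ between controllable quantities is the key technical hurdle.
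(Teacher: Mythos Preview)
Your construction and valuation identity are exactly the ones the paper uses: your function $u_\rho$ coincides with the paper's $u_{\delta,\rho}^{b}$, and the identity
\[
\oZ(u_{\delta,\rho}^{b}) + \oZ(\ell_{T_\delta}+b) = \oZ(\ell_{T_\delta}) + \oZ(\ell_{T_\delta\cap\{x_1\leq\rho\}}+b)
\]
is precisely what the paper derives. The gap is in the choice of limiting parameter. You fix $b$ and send $\rho\to 0^+$, which forces you to bound $\oZ(u_\rho)$ along a sequence whose epi-limit leaves $\CVf$; this is the obstacle you correctly identify but do not overcome. The paper instead fixes $\rho$ and sends $b\to+\infty$. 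Since $\{u_{\delta,\rho}^{b}\leq t\}=tT_\delta$ for every $t<b$, one has $u_{\delta,\rho}^{b}\eto\ell_{T_\delta}$ in $\CVf$ as $b\to+\infty$, so continuity of $\oZ$ gives $\oZ(u_{\delta,\rho}^{b})\to\oZ(\ell_{T_\delta})$ directly. No auxiliary bounding device is needed; the ``main obstacle'' simply disappears.

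This single limit already yields $\lim_{b\to+\infty}\big[\psi_1(b)\,\Delta V_n + \psi_2(b)\,\Delta V_n^*\big]=0$; composing all functions with $x\mapsto x/2$ produces a second such relation with $\Delta V_n$ and $\Delta V_n^*$ scaled by $2^{n}$ and $2^{-n}$ respectively, and the two together force $\psi_1(b)\to 0$ and $\psi_2(b)\to 0$ separately. So your separate construction via Lemma~\ref{le:vol_conv_poly} for $\psi_1$ is unnecessary. For the stronger conclusion on $\psi_2$, the paper argues by contradiction: along a hypothetical sequence $t_k\to+\infty$ with $\psi_2(t_k)>0$, it couples $b=t_k$ with $\rho_k=(1+\psi_2(t_k)^{-1})^{-1}$ and again sends $k\to\infty$; one still has $u_{\delta,\rho_k}^{t_k}\eto\ell_{T_\delta}$ in $\CVf$, and the explicit formula for $V_n^*(T_\delta\cap\{x_1\leq\rho\})$ from Lemma~\ref{le:p_t} produces a nonzero limit, a contradiction. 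Note also that your version of the argument, if it worked, would give $\psi_2(b)=0$ for \emph{every} $b$, which is stronger than the lemma asserts and in fact false in general.
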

\begin{proof}
Fix $0<\delta<\tfrac{1}{n-2}$ if $n\geq 3$ and $0<\delta < 1$ if $n=2$ as well as $0<\rho<1$. For $b>0$ and $t\geq b$ let $T_\delta$, $P_{\delta,\rho}^{b,t}$ and $x_\delta$ be as in Lemma~\ref{le:p_t} and define $u_{\delta,\rho}^{b}\in\CVf$ as 
$$\epi u_{\delta,\rho}^{b} = \epi \l_{T_\delta} \cap \{x_1 \leq b(1+\delta)+\rho(x_{n+1}-b)\}.$$
That is
\begin{equation}
\label{eq:def_ub_lvl_sets}
\{u_{\delta,\rho}^{b} \leq t\} =\begin{cases}
\emptyset,\qquad &\text{if } t<0\\
t T_\delta,\qquad &\text{if } 0\leq t <b\\
P_{\delta,\rho}^{b,t},\qquad&\text{if }t\geq b.
\end{cases}
\end{equation}
Hence, by Lemma~\ref{le:p_t} we have
$$u_{\delta,\rho}^{b} \mn (\l_{T_\delta}\circ \tau_{b x_\delta}^{-1}+b) = \l_{T_\delta}\quad \text{and} \quad u_{\delta,\rho}^{b} \mx (\l_{T_\delta}\circ \tau_{b x_\delta}^{-1}+b) = \l_{T_\delta \cap \{x_1 \leq \rho \}}\circ \tau_{b x_\delta}^{-1}+b,$$
where $\tau_{b x_\delta}$ denotes the translation $x\mapsto x+b x_\delta$ and where all occurring functions are elements of $\CVf$. Hence, translation invariance together with the valuation property of $\oZ$ shows
$$\oZ(u_{\delta,\rho}^{b}) + \oZ(\l_{T_\delta}+b) = \oZ(\l_{T_\delta}) + \oZ(\l_{T_\delta \cap \{x_1 \leq \rho \}} +b).$$
By Lemma~\ref{le:hd_conv_lvl_sets} and (\ref{eq:def_ub_lvl_sets}) we have $u_{\delta,\rho}^{b}\eto \l_{T_\delta}$ as $b\to +\infty$. By the continuity of $\oZ$ and Lemma~\ref{le:growth_functions} we now have
\begin{align*}
0 &=\lim_{b\to+\infty} \oZ(\l_{T_\delta}) - \oZ(u_{\delta,\rho}^{b})\\
&=\lim_{b\to +\infty} \big(\oZ(\l_{T_\delta}+b) - \oZ(\l_{T_\delta \cap \{x_1 \leq \rho \}} +b) \big)\\
&= \lim_{b\to +\infty} \left(\psi_1(b)\big(V_n(T_\delta) - V_n(T_\delta \cap \{x_1 \leq \rho \})\big) + \psi_2(b)\big(V_n^*(T_\delta) - V_n^*(T_\delta \cap \{x_1 \leq \rho \})\big)\right).
\end{align*}
Repeating the construction above but composing each of the occurring functions with $x\mapsto \frac x2$ and considering that $V_n$ and $V_n^*$ have different degrees of homogeneity, one easily deduces that $\lim_{b\to +\infty}\psi_1(b)= \lim_{b\to +\infty} \psi_2(b)=0$.\par
Assume now that there does not exist $T\in\R$ as claimed. In this case, there exists a sequence $t_k$, $k\in\N$ such that $t_k< t_{k+1}$, $\lim_{k\to+\infty} t_k = +\infty$ and $\psi_2(t_k)>0$ for every $k\in\N$. We repeat the construction above with $b=t_k$ and $\rho=\rho_k:=(1+\psi_2(t_k)^{-1})^{-1}$ for $k\in\N$. Again, we have $u_{\delta,\rho_k}^{t_k}\eto \l_{T_\delta}$ as $k\to+\infty$. Using the continuity of $\oZ$ and the second part of Lemma~\ref{le:p_t} this gives
\begin{align*}
0&=\lim_{k\to +\infty} \oZ(\l_{T_\delta}+t_k)-\oZ(\l_{T_\delta \cap \{x_1 \leq \rho_k\}} + t_k)\\
&= \lim_{k\to +\infty} \psi_1(t_k) (V_n(T_\delta) - V_n(T_\delta\cap \{x_1 \leq \rho_k\}))+\psi_2(t_k)(V_n^*(T_\delta)-V_n^*(T_\delta\cap\{x_1 \leq \rho_k\}))\\
&= 0 -\lim_{k\to +\infty} \psi_2(t_k)(\tfrac{1}{n!\delta^{n-2}} \tfrac{1+\delta}{\delta(1-(n-2)\delta)} (1+\tfrac{1}{\psi_2(t_k)} - \tfrac{1}{1+\delta}))\\
&=\tfrac{1}{n!\delta^{n-2}} \tfrac{1+\delta}{\delta(1-(n-2)\delta)},
\end{align*}
which is a contradiction.
\end{proof}

\begin{lemma}
\label{le:mcmullen_z_reg}
Let $\oY$ be a continuous and translation invariant valuation on $\CVf$. For every $\delta >0$ the map
$$u \mapsto \oY_{\delta}(u):= \oY(\reg_{\delta} u)$$
defines a continuous and translation invariant valuation on $\CV$. Furthermore, for $0\leq i \leq n$, there exist $\varphi_{i,\delta}:\R\times \Kn \to \R$ such that for every $t\in\R$, the map $K\mapsto \varphi_{i,\delta}(t,K)$ is a continuous, translation invariant, $i$-simple valuation that is homogeneous of degree $i$ and
$$\oY_{\delta} (\Ind_K+t) = \sum_{i=0}^n \varphi_{i,\delta}(t,K)$$
for every $K\in\Kn$ and $t\in\R$. Moreover, the maps $\delta\mapsto \varphi_{i,\delta}(t,K)$ and $t\mapsto \varphi_{i,\delta}(t,K)$ are continuous
and if $\oY$ is non-negative, then so are $\oY_{\delta}$ and $\varphi_{i,\delta}$.
\end{lemma}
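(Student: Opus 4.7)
The plan is to decompose the statement into three stages: first, transfer the valuation, translation-invariance and continuity properties from $\oY$ on $\CVf$ to $\oY_\delta$ on $\CV$ by direct use of the regularization machinery; second, apply McMullen's decomposition to the auxiliary convex-body valuation $K \mapsto \oY_\delta(\Ind_K + t)$ to extract the components $\varphi_{i,\delta}$; third, verify joint continuity in $(t, \delta)$ together with the non-negativity assertion.

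For the first stage I would invoke Lemma~\ref{le:lip_reg} throughout. Given $u, v \in \CV$ with $u \mn v \in \CV$, parts (1) and (2) yield that $\reg_\delta u, \reg_\delta v, \reg_\delta u \mn \reg_\delta v, \reg_\delta u \mx \reg_\delta v$ all lie in $\CVf$ and realize $\reg_\delta(u \mn v)$ and $\reg_\delta(u \mx v)$, so the valuation identity of $\oY$ transports at once to $\oY_\delta$. Translation invariance follows from part (6), continuity from part (4) composed with continuity of $\oY$, and $\oY_\delta \geq 0$ is immediate whenever $\oY \geq 0$.

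Next I would fix $t$ and $\delta$ and set $\mu_t(K) := \oY_\delta(\Ind_K + t)$ for $K \in \Kn$. Since $\Ind_{K \cup L} + t = (\Ind_K + t) \mn (\Ind_L + t)$ and $\Ind_{K \cap L} + t = (\Ind_K + t) \mx (\Ind_L + t)$ whenever $K \cup L \in \Kn$, and since $\Ind_K + t \in \CV$ for every $K \in \Kn$, the valuation property of $\oY_\delta$ turns $\mu_t$ into a valuation on $\Kn$; continuity follows because Hausdorff convergence of bodies yields epi-convergence of their shifted indicators (Lemma~\ref{le:hd_conv_lvl_sets}), and translation invariance is inherited from $\oY_\delta$. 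Theorem~\ref{thm:mcmullen_decomp} then furnishes the decomposition
\begin{equation*}
\mu_t(\lambda K) = \sum_{i=0}^n \lambda^i \mu_{t,i}(K) \qquad (\lambda \geq 0)
\end{equation*}
with each $\mu_{t,i}$ a continuous, translation invariant, $i$-simple, $i$-homogeneous valuation on $\Kn$, and I would set $\varphi_{i,\delta}(t, K) := \mu_{t,i}(K)$. Taking $\lambda = 1$ yields the required identity.

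Finally, for the joint continuity I would use Vandermonde inversion: evaluating the displayed polynomial at $\lambda = 1, 2, \ldots, n+1$ expresses each $\varphi_{i,\delta}(t, K)$ as a fixed linear combination of the values $\mu_t(jK) = \oY(\reg_\delta(\Ind_{jK} + t))$, so both continuity statements reduce to continuity of these values in $(t, \delta)$. Continuity in $t$ is immediate from Lemma~\ref{le:lip_reg}(5), which gives $\reg_\delta(\Ind_{jK} + t) = \reg_\delta \Ind_{jK} + t$; for continuity in $\delta$ I would rewrite $\reg_\delta \Ind_{jK} = (h(jK, \cdot) + \Ind_{\delta^{-1} Q^n})^*$ via (\ref{eq:properties_conj}), use that $\delta^{-1} Q^n$ varies Hausdorff-continuously with $\delta$, conclude that the inner function epi-converges as $\delta$ varies, and apply Wijsman's theorem (Theorem~\ref{th:wijsman}) to transport this to the conjugate. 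Non-negativity of $\varphi_{n,\delta}(t, K) = \lim_{\lambda \to +\infty} \mu_t(\lambda K)/\lambda^n$ and of $\varphi_{0,\delta}(t, K) = \mu_t(\{0\})$ is immediate; for $0 < i < n$ the same scaling limit works whenever $\dim K \leq i$, since $j$-simplicity kills the coefficients with $j > i$. The hard part will be extending non-negativity of $\varphi_{i,\delta}$ to full-dimensional $K$, since McMullen components of a non-negative valuation are not automatically non-negative; I expect this to require a finer structural argument specific to the form of $\mu_t$ arising from the regularization.
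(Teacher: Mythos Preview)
Your proof is essentially the paper's: the valuation, translation-invariance and continuity transfer via Lemma~\ref{le:lip_reg}, the McMullen decomposition of $K\mapsto\oY_\delta(\Ind_K+t)$, and the continuity in $t,\delta$ extracted by homogeneity (your Vandermonde inversion is just the explicit form of what the paper does in one line). On non-negativity the paper's argument is exactly your dimension argument---``evaluating at convex bodies $K$ of different dimensions''---and it does \emph{not} supply the finer structural argument you anticipate for $\varphi_{i,\delta}$ on bodies with $\dim K>i$; the only downstream use (Lemma~\ref{le:gf_diffable}) is $\varphi_{n,\delta}(t,[0,1]^n)\geq 0$, which your leading-term scaling $\lim_{\lambda\to\infty}\mu_t(\lambda K)/\lambda^n$ already delivers, so you can drop the worry.
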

\begin{proof}
For $u,v\in\CV$ such that $u\wedge v\in\CV$, we have by Lemma~\ref{le:lip_reg} and the valuation property of $\oY$
\begin{align*}
\oY_{\delta}(u\wedge v) + \oY_{\delta}(u\vee v) &= \oY(\reg_{\delta} (u\wedge v)) + \oY(\reg_{\delta} (u\vee v))\\
&= \oY(\reg_{\delta} u \wedge \reg_{\delta} v) + \oY(\reg_{\delta} u \vee \reg_{\delta} v)\\
&= \oY(\reg_{\delta} u) + \oY(\reg_{\delta} v)\\
&= \oY_{\delta}(u)+\oY_{\delta}(v),
\end{align*}
which shows the valuation property of $\oY_{\delta}$. Similar, one shows that $\oY_{\delta}$ is continuous and translation invariant.\par
For $t\in\R$ and $\delta>0$, define $\mu_{\delta,t}:\Kn\to\R$ as
$$\mu_{\delta,t}(K)=\oY_{\delta}(\Ind_K +t).$$
By the properties of $\oY_{\delta}$ it is easy to see that $\mu_{\delta,t}$ defines a continuous and translation invariant valuation on $\Kn$. By Theorem~\ref{thm:mcmullen_decomp} there exist continuous, translation invariant, $i$-simple valuations $(\mu_{\delta,t})_i$ that are homogeneous of degree $i$, $0\leq i \leq n$, such that $\mu_{\delta,t} = (\mu_{\delta,t})_0+\cdots +(\mu_{\delta,t})_n$. Since $t\in\R$ and $\delta>0$ were arbitrary, this defines functions $\varphi_{i,\delta}(t,K)=(\mu_{\delta,t})_i (K)$ for $0\leq i \leq n$. As $t\mapsto \oY_{\delta}(\Ind_K+t)$ is continuous, it is easy to see that the maps $t\mapsto \varphi_{i,\delta}(t,K)$ are continuous as well. Furthermore, by (\ref{eq:def_reg_delta}) and Lemma~\ref{le:hd_conv_lvl_sets} the map $\delta\mapsto \oY_{\delta}(\Ind_K+t)$ is continuous, which together with homogeneity shows that also the maps $\delta \mapsto \varphi_{i,\delta}(t,K)$ are continuous.\par
Finally, if $\oY$ is non-negative, then by definition also $\oY_{\delta}$ is non-negative and consequently the valuations $\varphi_{i,\delta}$ are non-negative as well, which can be seen by evaluating at convex bodies $K$ of different dimensions.
\end{proof}

\begin{lemma}
\label{le:y_only_vol}
Let $\oZ$ be a non-negative, continuous, $\SLn$ and translation invariant valuation on $\CVf$ and let $\psi_0,\psi_1$ and $\psi_2$ denote its growth functions. The map
$$\oY(u)=\oZ(u)-\psi_0(\min\nolimits_{x\in\Rn} u(x)) - \int_{\dom u^*} \psi_2(\nabla u^*(x)\cdot x - u^*(x)) \d x$$
defines a continuous, $\SLn$ and translation invariant valuation on $\CVf$. Furthermore, if $\varphi_{i,\delta}:\R\times\Kn\to\R$ are given as in Lemma~\ref{le:mcmullen_z_reg} such that
$$\oY_{\delta}(\Ind_K+t)=\sum_{i=0}^n \varphi_{i,\delta}(t,K),$$
then $\varphi_{i,\delta}$ is non-negative for every $1\leq i \leq n$ and $\delta>0$. Moreover,
$$\oY_{\delta}(\l_K+t) = \psi_1(t)V_n(K_{\delta})$$
for every $K\in\Ko$ and $t\in\R$, where $K_{\delta}=\conv(K\cup \delta C^n)$.
\end{lemma}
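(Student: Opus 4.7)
My plan has three steps: (i) verify that $\oY$ is a valuation of the claimed type, (ii) compute $\oY(\l_L+t)$ explicitly for $L\in\Koin$ and deduce the formula for $\oY_\delta(\l_K+t)$, and (iii) use non-negativity of $\oZ$ together with McMullen's decomposition to obtain non-negativity of the $\varphi_{i,\delta}$ in positive degrees.

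For (i), by Lemmas~\ref{le:growth_functions} and~\ref{le:growth_functions_cond} the growth functions $\psi_0,\psi_1,\psi_2$ are non-negative and continuous, and $\psi_2$ vanishes on $[T,\infty)$ for some $T$. Thus the two subtracted maps are continuous, $\SLn$ and translation invariant valuations on $\CV\supset\CVf$ and on $\CVf$ respectively, by Lemma~\ref{le:min_is_a_val} and Lemma~\ref{le:polar_vol_is_a_val}, and $\oY$ inherits continuity, $\SLn$- and translation-invariance, as well as the valuation property. For (ii), let $L\in\Koin$. Then $\min(\l_L+t)=t$ and by~(\ref{eq:properties_conj}), $(\l_L+t)^*=\Ind_{L^*}-t$, which is constant equal to $-t$ on the bounded set $L^*$. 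Hence $\nabla(\l_L+t)^*=0$ almost everywhere on $L^*$, the integrand of the polar term is identically $\psi_2(t)$, and the integral equals $\psi_2(t)V_n^*(L)$. Combining this with Lemma~\ref{le:growth_functions}, the $\psi_0$- and $\psi_2$-contributions cancel exactly, yielding $\oY(\l_L+t)=\psi_1(t)V_n(L)$. Now by Lemma~\ref{le:lip_reg}(5),(7), $\reg_\delta(\l_K+t)=\l_{K_\delta}+t$ with $K_\delta\in\Koin$, so $\oY_\delta(\l_K+t)=\oY(\l_{K_\delta}+t)=\psi_1(t)V_n(K_\delta)$.

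For (iii), I carry out the analogous computation on $\oY_\delta(\Ind_K+t)$ for $K\in\Kn$. Using $(\reg_\delta u)^*=u^*+\Ind_{\delta^{-1}Q^n}$ and $\Ind_K^*=h(K,\cdot)$, I get $(\reg_\delta(\Ind_K+t))^*=h(K,\cdot)-t+\Ind_{\delta^{-1}Q^n}$, with domain $\delta^{-1}Q^n$. Euler's identity for the $1$-homogeneous function $h(K,\cdot)$ gives $\nabla h(K,x)\cdot x=h(K,x)$ a.e., so the polar integrand reduces to the constant $\psi_2(t)$ and the integral to $\psi_2(t)(2/\delta)^n$. Since also $\min \reg_\delta(\Ind_K+t)=t$, I obtain
\[
\oY_\delta(\Ind_K+t)=\oZ_\delta(\Ind_K+t)-\psi_0(t)-\psi_2(t)(2/\delta)^n.
\]
Because $\oZ$ is non-negative, Lemma~\ref{le:mcmullen_z_reg} applied to $\oZ$ in place of $\oY$ provides a McMullen decomposition $\oZ_\delta(\Ind_K+t)=\sum_{i=0}^n\tilde\varphi_{i,\delta}(t,K)$ with each $\tilde\varphi_{i,\delta}\geq 0$. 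The correction $-\psi_0(t)-\psi_2(t)(2/\delta)^n$ is $K$-independent, hence (by comparing homogeneity degrees after replacing $K$ with $\lambda K$ and using the uniqueness of the McMullen decomposition) it only modifies the $i=0$ term, forcing $\varphi_{i,\delta}=\tilde\varphi_{i,\delta}\geq 0$ for $1\leq i\leq n$. The main subtlety to keep in mind is that the two subtracted valuations are engineered so that on both $\l_L+t$ and $\reg_\delta(\Ind_K+t)$ they contribute only $K$-independent quantities in positive degrees—this is precisely the cancellation (driven by Euler's identity for the support function) that makes (ii) clean and allows non-negativity to be transferred from $\oZ_\delta$ to $\oY_\delta$ in degrees $1\leq i\leq n$.
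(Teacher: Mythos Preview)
Your proposal is correct and follows essentially the same route as the paper: it uses Lemmas~\ref{le:min_is_a_val} and~\ref{le:polar_vol_is_a_val} to verify $\oY$ is a valuation, computes $\oY(\l_L+t)=\psi_1(t)V_n(L)$ via the cancellation from $(\l_L+t)^*=\Ind_{L^*}-t$, passes to $\oY_\delta$ through Lemma~\ref{le:lip_reg}, and then exploits Euler's identity for $h(K,\cdot)$ to reduce the polar term on $\reg_\delta(\Ind_K+t)$ to a $K$-independent constant so that the McMullen decompositions of $\oZ_\delta$ and $\oY_\delta$ agree in degrees $1\le i\le n$. Your constant $(2/\delta)^n$ for the polar integral is the correct value of $V_n(\delta^{-1}Q^n)$; the paper records $\delta^n V_n(Q^n)$, which is a harmless typo since only $K$-independence matters for the argument.
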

\begin{proof}
By Lemma~\ref{le:min_is_a_val} and Lemma~\ref{le:polar_vol_is_a_val} the operator $\oY$ defines a continuous, $\SLn$ and translation invariant valuation on $\CVf$. By Lemma~\ref{le:growth_functions} we have
\begin{align*}
\oY(\l_L+t)&=\oZ(\l_L+t)-\psi_0(t)-\int_{L^*} \psi_2(t) \d x\\
&= \psi_0(t)+\psi_1(t)V_n(L)+\psi_2(t) V_n^*(L)-\psi_0(t)-\psi_2(t) V_n^*(L)\\
&= \psi_1(t)V_n(L)
\end{align*}
for every $L\in\Koin$. Therefore, by Lemma~\ref{le:lip_reg}
$$\oY_{\delta}(\l_K+t)=\oY(\reg_{\delta} \l_K+t)=\oY(K_{\delta}+t)=\psi_1(t)V_n(K_{\delta}),$$
for every $K\in\Ko$.\par
For arbitrary $t\in\R$ and $\delta>0$ observe that
$$K \mapsto \psi_0(\min\nolimits_{x\in\Rn} \reg_{\delta} \Ind_K(x)+t) = \psi_0(t)$$
is a translation invariant valuation on $\Kn$ that is homogeneous of degree $0$. Furthermore, for any $K\in\Kn$ one has
$$\nabla h(K,x)\cdot x = h(K,x)$$
for a.e. $x\in\Rn$. Setting $u_K=\reg_{\delta} \Ind_K + t$ with $K\in\Kn$ we have
$$u_K^* = (\Ind_K+t)^* + \Ind_{\delta^{-1} Q^n} = h(K,\cdot)-t+\Ind_{\delta^{-1} Q^n}$$
which shows that also
\begin{align*}
K&\mapsto \int_{\dom u_K^*} \psi_2(\nabla u_K^*(x)\cdot x - u_K^*(x)) \d x\\
&= \int_{\delta^{-1} Q^n} \psi_2(\nabla h(K,x)\cdot x - h(K,x)+t) \d x\\
&= \delta^n V_n(Q^n) \psi_2(t),
\end{align*}
is a translation invariant valuation on 
$\Kn$ that is homogeneous of degree $0$. Hence, if we apply Lemma~\ref{le:mcmullen_z_reg} to both $\oY$ and $\oZ$ we obtain
\begin{align*}
\sum_{i=0}^n \varphi_{i,\delta}(t,K) &= \oY_{\delta}(\Ind_K+t)\\
&= \oZ_{\delta}(\Ind_K+t)-\psi_0(t)-\delta^n V_n(Q^n)\psi_2(t)\\
&= \sum_{i=0}^n \rho_{i,\delta}(t,K)-\psi_0(t)-\delta^n V_n(Q^n)\psi_2(t)
\end{align*}
with $\rho_{i,\delta}:\R\times\Kn\to[0,+\infty)$. By homogeneity, we must have $\varphi_{i,\delta}=\rho_{i,\delta}$ for every $1\leq i \leq n$ and $\delta>0$ and in particular, those maps are non-negative.
\end{proof}

By Lemma~\ref{le:val_det_by_gf} every non-negative, continuous, $\SLn$ and translation invariant valuation $\oZ$ on $\CVf$ is uniquely determined by its growth functions $\psi_0,\psi_1,\psi_2:\R\to[0,+\infty)$ and by Lemma~\ref{le:growth_functions_cond} we know that $\lim_{t\to+\infty} \psi_1(t)=0$. In Lemma~\ref{le:diff_lemma} and Lemma~\ref{le:gf_diffable} we will furthermore show that $\psi_1$ is $n$-times continuously differentiable and that its $n$-th derivative has constant sign. The idea of the proof of Lemma~\ref{le:diff_lemma} is to describe the behaviour of $\oZ$ on regularizations of indicator functions with the help of $\psi_1$. As the proof is rather technical, we will describe here its basic idea in the $1$-dimensional case.\par
Let $\oY_{\delta}$ be defined as in Lemma~\ref{le:y_only_vol} and recall that $\oY_{\delta}(\l_K+t)=\psi_1(t) V_n(K_\delta)$ for every $K\in\KoOne$ and $t\in\R$. We will now compute $\oY_{\delta}(\Ind_{[0,\lambda]}+t)$ for $\lambda>0$ and $t\in\R$. By the properties of $\oY_{\delta}$ we have
\begin{align*}
\oY_{\delta}(\l_{[0,\lambda/h]}+t)&=\psi_1(t)V_1\left(\left[0,\tfrac{\lambda}{h}\right]_\delta\right)\\
&= \psi_1(t) V_1\left(\conv\left(\left[0,\tfrac{\lambda}{h}\right]\cup[-\delta,\delta]\right)\right)\\
&= \psi_1(t)\left(\max\left\{\tfrac{\lambda}{h},\delta\right\}+\delta\right)
\end{align*}
for every $\lambda,h>0$ and $t\in\R$. Let $v^{\lambda,h}=\l_{[0,\lambda/h]} + \Ind_{[0,\lambda]}$, that is $v^{\lambda,h}(x)=hx/\lambda$ for $0\leq x \leq \lambda$ and $v^{\lambda,h}(x)=+\infty$ if $x<0$ or $x>\lambda$. Note that $v^{\lambda,h} \eto \Ind_{[0,\lambda]}$ as $h\to 0^+$. In order to calculate $\oY_{\delta}(v^{\lambda,h}+t)$, let $\tau_\lambda$ denote the translation $x\mapsto x+\lambda$ and obverse that
$$v^{\lambda,h} \mn (\l_{[0,\lambda/h]}\circ \tau_\lambda^{-1} + h) = \l_{[0,\lambda/h]}$$
and
$$v^{\lambda,h} \mx (\l_{[0,\lambda/h]}\circ \tau_\lambda^{-1} + h) = \Ind_{\{\lambda\}}+h.$$
Note, that $\Ind_{\{\lambda\}}=\elim_{h\to+\infty} \l_{[0,\lambda/h]}\circ \tau_{\lambda}^{-1}$ and therefore using the continuity as well as the translation invariance of $\oY_{\delta}$ we obtain
\begin{align*}
\oY_{\delta}(\Ind_{\{\lambda\}}+t)&=\lim_{h\to +\infty} \oY_{\delta}(\l_{[0,\lambda/h]}\circ\tau_{\lambda}^{-1}+t)\\
&= \lim_{h\to +\infty} \psi_1(t)\left(\max\left\{\tfrac{\lambda}{h},\delta\right\}+\delta\right)\\
&=2\delta \psi_1(t)
\end{align*}
for every $\delta,\lambda>0$ and $t\in\R$. Thus, we have by the valuation property of $\oY_{\delta}$ together with translation invariance that
\begin{align*}
\oY_{\delta}(v^{\lambda,h}+t) &= \oY_{\delta}(\l_{[0,\lambda/h]}+t)+\oY_{\delta}(\Ind_{\lambda}+h+t) - \oY_{\delta}(\l_{[0,\lambda/h]}\circ \tau_\lambda^{-1} + h+t)\\
&=(\psi_1(t)-\psi_1(t+h))\left(\max\left\{\tfrac{\lambda}{h},\delta\right\}+\delta\right) + 2\delta \psi_1(t+h).
\end{align*}
By continuity, this gives
\begin{align*}
\oY_{\delta}(\Ind_{[0,\lambda]}+t)&=\lim_{h\to 0^+} \oY_{\delta}(v^{\lambda,h}+t)\\
&= \lim_{h\to 0^+} \left( \lambda\tfrac{\psi_1(t)-\psi_1(t+h)}{h} + \delta(\psi_1(t)-\psi_1(t+h)) + 2 \delta \psi_1(t+h) \right)\\
&=\lambda \lim_{h\to 0^+} \tfrac{\psi_1(t)-\psi_1(t+h)}{h} + 2 \delta \psi_1(t),
\end{align*}
which shows that $\psi_1$ is differentiable from the right. Similarly, one has
\begin{align*}
\oY_{\delta}(\Ind_{[0,\lambda]}+t)&=\lim_{h\to 0^+} \oY_{\delta}(v^{\lambda,h}+t-h)\\
&=\lambda \lim_{h\to 0^+} \tfrac{\psi_1(t-h)-\psi_1(t)}{h} + 2 \delta \psi_1(t),
\end{align*}
which shows that $\psi_1$ is differentiable from the left and thus
$$\oY_{\delta}(\Ind_{[0,\lambda]}+t) = -\lambda \psi_1'(t) + 2\delta \psi_1(t),$$
for every $\delta,\lambda > 0$ and $t\in\R$.

\begin{lemma}
\label{le:diff_lemma}
Let $\oY_{\delta}$ be defined as in Lemma~\ref{le:y_only_vol}. For $\lambda>0$,
$$\oY_{\delta}(\Ind_{[0,\lambda]^n}+t) = \frac{1}{n!} \sum_{k=0}^{n} c_{n,k}(\delta) (-\lambda)^k \psi_1^{(k)}(t),$$
for every $t\in\R$, where $c_{n,k}(\delta)$ are polynomials in $\delta$ with $c_{n,n}(\delta)\equiv 1$ and $\psi_1^{(i)}(t)= \frac{\d^i}{\d t^i} \psi_1(t)$ for $i\geq 0$. In particular, $\psi_1$ is $n$-times continuously differentiable.
\end{lemma}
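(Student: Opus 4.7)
The plan is to generalize to $n$ dimensions the 1-dimensional argument given before the lemma statement, iterating the 1D trick once per coordinate. For positive parameters $h_1, \ldots, h_n$ and $0 \leq k \leq n$, I introduce the auxiliary functions
\begin{equation*}
U_k(x) := \sum_{i=1}^n \frac{h_i x_i}{\lambda} + \Ind_{[0,\lambda]^k \times [0,\infty)^{n-k}}(x) \in \CV.
\end{equation*}
The function $U_n$ has compact support $[0,\lambda]^n$ and epi-converges to $\Ind_{[0,\lambda]^n}$ as all $h_i \to 0^+$ by Lemma~\ref{le:hd_conv_lvl_sets}. At the other extreme $U_0 = \l_K$ for the simplex $K = \conv\{0, \tfrac{\lambda}{h_1}e_1, \ldots, \tfrac{\lambda}{h_n}e_n\} \in \Ko$, and Lemma~\ref{le:y_only_vol} combined with Lemma~\ref{le:vol_conv_poly} gives, for $h_i < \lambda/\delta$,
\begin{equation*}
\oY_{\delta}(U_0 + t) = \frac{\psi_1(t)}{n!} \prod_{i=1}^n \Bigl(\frac{\lambda}{h_i} + \delta\Bigr).
\end{equation*}

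The key step is a recursion relating $\oY_{\delta}(U_k + t)$ to $\oY_{\delta}(U_{k-1} + t)$ for $1 \le k \le n$. Setting $g_k := U_{k-1} \circ \tau_{\lambda e_k}^{-1} + h_k$, a direct inspection of supports and values shows $U_k \mn g_k = U_{k-1}$ while $U_k \mx g_k$ is supported on the face $\{x_k = \lambda\}$; combining the valuation identity with translation invariance (which yields $\oY_{\delta}(g_k + t) = \oY_{\delta}(U_{k-1} + t + h_k)$) produces
\begin{equation*}
\oY_{\delta}(U_k + t) = \oY_{\delta}(U_{k-1} + t) - \oY_{\delta}(U_{k-1} + t + h_k) + \oY_{\delta}(W_{k-1} + t + h_k),
\end{equation*}
where $W_{k-1}$ is the lower-dimensional face function $\sum_{i \ne k} h_i x_i/\lambda + \Ind_{[0,\lambda]^{k-1} \times \{0\} \times [0,\infty)^{n-k}}$, analyzable by the same type of recursion. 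Unfolding this from $k = n$ down to $k = 1$ expresses $\oY_{\delta}(U_n + t)$ as an iterated finite difference in $t$ applied to $\oY_{\delta}(U_0 + \cdot)$, plus analogous contributions from the $W_{k-1}$ face terms carrying fewer differences.

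Passing to the limit $h_n, \ldots, h_1 \to 0^+$, since $\oY_{\delta}(U_0 + t)$ factors as $\psi_1(t)$ times a $t$-independent product, the iterated difference in $t$ acts on $\psi_1$ alone; the leading monomial $\lambda^n / \prod h_i$ of $\prod (\lambda/h_i + \delta)$ combined with the $n$-fold finite difference $\Delta_{h_1} \cdots \Delta_{h_n} \psi_1(t)$ (where $\Delta_h f(s) := f(s) - f(s+h)$) produces exactly $\tfrac{(-\lambda)^n}{n!} \psi_1^{(n)}(t)$. This both forces the $n$-fold continuous differentiability of $\psi_1$ via continuity of the left-hand side, and confirms $c_{n,n}(\delta) \equiv 1$. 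The subleading $\delta$-monomials in the expansion of $\prod(\lambda/h_i + \delta)$, together with the face contributions from the $W_{k-1}$ terms, assemble into the lower-order terms $\tfrac{1}{n!} c_{n,k}(\delta) (-\lambda)^k \psi_1^{(k)}(t)$ for $0 \le k < n$, with each $c_{n,k}$ a polynomial in $\delta$ determined by the combinatorics of the iteration.

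The main obstacle will be the iterated limit itself: for $k < n$ the intermediate quantity $\oY_{\delta}(U_k + \cdot)$ individually diverges as the $h_i$ with $i > k$ tend to zero, since the corresponding $U_k$ leaves $\CV$ in the limit (its sublevel sets become unbounded in the last $n-k$ coordinates). No single term of the telescoped expansion has a limit on its own, so one must keep the full combination together --- exactly as in the 1-dimensional model where $(\lambda/h + \delta)(\psi_1(t) - \psi_1(t+h))$ converges to $-\lambda \psi_1'(t)$ despite both factors being singular. Careful bookkeeping is then needed to verify that the divergent $1/h_i$ factors cancel cleanly against the $h_i$ factors produced by the finite differences, and that the contributions from all of the $W_{k-1}$ face terms reorganize correctly into the claimed polynomial dependence on $\delta$.
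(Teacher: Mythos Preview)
Your approach is essentially the paper's, but organized so as to make the hardest part harder. The paper uses the same ``one coordinate at a time'' trick and the same simplex cone $U_0=\ell_K$ with $K=\conv\{0,\tfrac{\lambda}{h_1}e_1,\dots,\tfrac{\lambda}{h_n}e_n\}$. The substantive difference is \emph{when} the limits are taken. The paper does not keep all $h_1,\dots,h_n$ alive and unfold the whole recursion before passing to the limit; instead it runs an induction on $i$, and at step $i\!\to\!i{+}1$ it immediately lets $h_{i+1}\to 0^+$. The resulting intermediate functions $u_i^{\lambda,h}$ are precisely your $U_i$ with $h_1=\cdots=h_i=0$ already set, so they depend only on $h_{i+1},\dots,h_n$ and stay in $\CV$ throughout. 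At each step the left-hand side $\oY_\delta(v_{i+1}^{\lambda,h}+t)$ converges (epi-continuity), and on the right-hand side only a \emph{single} divided difference $\big(\psi_1^{(i)}(t)-\psi_1^{(i)}(t+h_{i+1})\big)/h_{i+1}$ remains whose limit is not yet known. Its existence is therefore forced, establishing one additional order of differentiability cleanly. This completely sidesteps the obstacle you flag: no term ever diverges, and there is no simultaneous cancellation to orchestrate.

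Two further simplifications in the paper are worth noting. First, the face term is handled without a separate recursion: the function your $W_{k-1}$ plays is obtained as $\lim_{h_{i+1}\to+\infty} u_i^{\lambda,h}\circ\tau^{-1}$, so its $\oY_\delta$-value is read off directly from the inductive formula for $u_i^{\lambda,h}$ (it contributes the factor $2\delta$ in place of $\lambda/h_{i+1}+\delta$). Second, the paper tracks the coefficients via the simple recursion $c_{i+1,k}(\delta)=c_{i,k-1}(\delta)+2\delta\,c_{i,k}(\delta)$ with $c_{i,i}\equiv1$, which gives the polynomial dependence on $\delta$ without any combinatorial bookkeeping of subleading monomials. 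Your plan can be made to work---taking the limits sequentially and arguing as above one still isolates one new divided difference at each stage---but the write-up would effectively reconstruct the paper's induction inside the unfolded expression.
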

\begin{proof}
Let $\{e_1,\ldots,e_n\}$ denote the standard basis of $\Rn$ and set $e_0=0$. For $h=(h_1,\ldots,h_n)$, $\lambda >0$ and $0\leq i < n$, define the function $u_{i}^{\lambda,h}$ through its sublevel sets as
$$\{u_{i}^{\lambda,h} < 0 \} = \emptyset,\quad \{u_{i}^{\lambda,h}\leq s\} = [0, \lambda e_0]+\cdots [0,\lambda e_i]+\conv\{0,s \lambda e_{i+1}/h_{i+1},\ldots,s \lambda e_n/h_n\},$$
for every $s\geq 0$. Note, that $u_{i}^{\lambda,h}$ does not depend on $h_j$ for $j\leq i$. Furthermore, let $u_n^{\lambda,h} = \Ind_{[0,\lambda]^n}$.
We will use induction to show that $u_i^{\lambda,h}\in\CV$ and
$$\oY_{\delta}(u_i^{\lambda,h}+t) = \frac{1}{n!} \left(\sum_{k=0}^{i} c_{i,k}(\delta) (-\lambda)^k \psi_1^{(k)}(t) \right) \prod_{l=i+1}^n \big(\max\big\{\tfrac{\lambda}{h_l},\delta\big\}+\delta\big),$$
for every $t\in\R$, $\lambda>0$ and $0\leq i\leq n$
with polynomials $c_{i,k}$ such that $c_{i,i}(\delta)\equiv 1$.\par
For $i=0$, let $P_h = \conv\{0, e_1/h_1,\ldots, e_n/h_n\}\in\Po$. Observe, that $u_0^{\lambda,h}=\l_{\lambda P_h}\in\CV$. Hence, by the properties of $\oY_{\delta}$ and Lemma~\ref{le:vol_conv_poly} we have
\begin{align*}\oY_{\delta}(u_0^{\lambda,h}+t) = \oY_{\delta}(\l_{\lambda P_h}+t) &= \psi_1(t)V_n((\lambda P_h)_{\delta}) \\
&= \psi_1(t)V_n(\conv(\delta C^n\cup \lambda P_h))\\
&= \psi_1(t) \frac{1}{n!} \prod_{k=1}^n (\max\{\tfrac{\lambda}{h_k},\delta\}+\delta),
\end{align*}
for every $t\in\R$.\par
Now assume that the statement holds true for $i\geq 0$. Define the function $v_{i+1}^{\lambda,h}$ by
$$\{v_{i+1}^{\lambda,h}\leq s\} = \{u_{i}^{\lambda,h}\leq s\} \cap \{x_{i+1}\leq \lambda\},$$
for every $s\in\R$. As $\epi v_{i+1}^{\lambda,h} = \epi u_{i}^{\lambda,h}\cap \{x_{i+1}\leq \lambda\}$, it is easy to see that $v_{i+1}^{\lambda,h}\in\CV$. Furthermore, by convergence of level sets and Lemma~\ref{le:hd_conv_lvl_sets} we have $v_{i+1}^{\lambda,h}\eto u_{i+1}^{\lambda,h}$ as $h_{i+1}\to 0^+$. Since epi-limits of convex functions are again convex (Theorem~\ref{thm:g_comp}), it is easy to see that $u_{i+1}^h\in\CV$. Now, let $\tau_{i+1}^{\lambda}$ be the translation $x\mapsto x+\lambda e_{i+1}$. Note, that
\begin{equation}
\label{eq:min_u_v}
\{v_{i+1}^{\lambda,h}\leq s\} \cup \{(u_{i}^{\lambda,h}\circ (\tau_{i+1}^\lambda)^{-1}+h_{i+1})\leq s \}=\{u_i^{\lambda,h}\leq s\}.
\end{equation}
Furthermore, let
\begin{equation}
\label{eq:max_u_v}
w_{i+1}^{\lambda,h}:= v_{i+1}^{\lambda,h} \vee (u_{i}^{\lambda,h}\circ (\tau_{i+1}^\lambda)^{-1}+h_{i+1})\in \CV
\end{equation}
and observe that $\dom w_{i+1}^{\lambda,h} \subset \{x_{i+1}=\lambda\}$ as well as
$$\bar{w}_{i+1}^{\lambda,h}:=\elim_{h_{i+1}\to 0^+} w_{i+1}^{\lambda,h} = \elim_{h_{i+1}\to +\infty} u_{i}^{\lambda,h}\circ (\tau_{i+1}^{\lambda})^{-1} \in\CV.$$
Hence, using the induction assumption and the continuity as well as the translation invariance of $\oY_{\delta}$, we obtain
\begin{align*}
\oY_{\delta}(\bar{w}_{i+1}^{\lambda,h}+t) &= \lim_{h_{i+1}\to +\infty} \oY_{\delta}(u_i^{\lambda,h}+t)\\
&= \lim_{h_{i+1}\to +\infty} \frac{1}{n!} \left(\sum_{k=0}^{i} c_{i,k}(\delta) (-\lambda)^k \psi_1^{(k)}(t) \right) \prod_{l=i+1}^n \big(\max\big\{\tfrac{\lambda}{h_l},\delta\big\}+\delta\big)\\
&= \frac{2\delta}{n!} \left(\sum_{k=0}^{i} c_{i,k}(\delta) (-\lambda)^k \psi_1^{(k)}(t) \right) \prod_{l=i+2}^n \big(\max\big\{\tfrac{\lambda}{h_l},\delta\big\}+\delta\big).
\end{align*}
Furthermore, (\ref{eq:min_u_v}) and (\ref{eq:max_u_v}) together with the valuation property of $\oY_{\delta}$ give
$$\oY_{\delta}(u_i^{\lambda,h}+t) + \oY_{\delta}(w_{i+1}^{\lambda,h}+t) = \oY_{\delta}(v_{i+1}^{\lambda,h}+t) + \oY_{\delta}(u_{i}^{\lambda,h}\circ (\tau_{i+1}^\lambda)^{-1}+h_{i+1}+t).$$
Using the induction assumption and the translation invariance of $\oY_{\delta}$ again, we obtain
\begin{align*}
\oY_{\delta}(v_{i+1}^{\lambda,h}+t) &= \oY_{\delta}(u_i^{\lambda,h}+t)-\oY_{\delta}(u_i^{\lambda,h}+t+h_{i+1})+\oY_{\delta}(w_{i+1}^{\lambda,h}+t)\\
&= \frac{1}{n!} \left(\sum_{k=0}^{i} c_{i,k}(\delta) (-\lambda)^k \big(\psi_1^{(k)}(t)-\psi_1^{(k)}(t+h_{i+1})\big) \right) \prod_{l=i+1}^n \big(\max\big\{\tfrac{\lambda}{h_l},\delta\big\}+\delta\big)\\
&\,\quad + \oY_{\delta}(w_{i+1}^{\lambda,h}+t).
\end{align*}
As $h_{i+1}\to 0^+$, the continuity of $\oY_{\delta}$ gives
\begin{align*}
\oY_{\delta}(u_{i+1}^{\lambda,h}+t) &= \lim_{h_{i+1}\to 0^+} \oY_{\delta}(v_{i+1}^{\lambda,h}+t)\\
&=\frac{1}{n!} \left(\sum_{k=0}^{i} c_{i,k}(\delta) (-\lambda)^k \lambda \lim_{h_{i+1}\to 0^+} \frac{\psi_1^{(k)}(t)-\psi_1^{(k)}(t+h_{i+1})}{h_{i+1}} \right)\prod_{l=i+2}^n \big(\max\big\{\tfrac{\lambda}{h_l},\delta\big\}+\delta\big)\\
&\,\quad +\oY_{\delta}(\bar{w}_{i+1}^{\lambda,h}+t),
\end{align*}
which shows that $\psi_1^{(i)}$ is differentiable from the right. Similarly, using $v_{i+1}^{\lambda,h}+t-h_{i+1} \eto u_{i+1}^{\lambda,h} +t$ as $h_{i+1}\to 0^+$, shows that $\psi_1^{(i)}$ is differentiable from the left. Collecting terms therefore shows for every $t\in\R$
\begin{align*}
\oY_{\delta}(u_{i+1}^{\lambda,h}+t)&=\frac{1}{n!} \left(\sum_{k=0}^{i} c_{i,k}(\delta) \big((-\lambda)^{k+1} \psi_1^{(k+1)}(t)+2\delta (-\lambda)^k\psi_1^{(k)}(t) \big)\right)\prod_{l=i+2}^n \big(\max\big\{\tfrac{\lambda}{h_l},\delta\big\}+\delta\big)\\
&=\frac{1}{n!} \left(\sum_{k=0}^{i+1} c_{i+1,k}(\delta) (-\lambda)^k \psi_1^{(k)}(t)\big) \right) \prod_{l=i+2}^n \big(\max\big\{\tfrac{\lambda}{h_l},\delta\big\}+\delta\big)
\end{align*}
with $c_{i+1,1}(\delta)=2\delta$, $c_{i+1,k}(\delta)=c_{i,k-1}(\delta)+2\delta c_{i,k}(\delta)$ for $1<k<i+1$ and $c_{i+1,i+1}(\delta)=c_{i,i}(\delta)\equiv 1$.
\end{proof}

\begin{lemma}
\label{le:gf_diffable}
Let $\oZ$ be a non-negative, continuous, $\SLn$ and translation invariant valuation on $\CVf$. If $\psi_1$ denotes the growth function of $\oZ$, then $\psi_1$ is $n$-times continuously differentiable and $(-1)^n \psi_1^{(n)}$ is non-negative.
\end{lemma}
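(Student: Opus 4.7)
The plan is to leverage the machinery already assembled in Lemma~\ref{le:diff_lemma} and Lemma~\ref{le:y_only_vol}. The differentiability assertion is in fact the last sentence of Lemma~\ref{le:diff_lemma}, so the bulk of the work for this lemma is already done; I only need to extract the sign of the top derivative $\psi_1^{(n)}$ by comparing two distinct expressions for one and the same quantity, namely $\oY_\delta(\Ind_{[0,\lambda]^n}+t)$.

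First I would record the explicit formula from Lemma~\ref{le:diff_lemma}: since $c_{n,n}(\delta)\equiv 1$, the coefficient of $\lambda^n$ in the polynomial
\begin{equation*}
\oY_\delta(\Ind_{[0,\lambda]^n}+t) \;=\; \frac{1}{n!}\sum_{k=0}^n c_{n,k}(\delta)(-\lambda)^k\psi_1^{(k)}(t)
\end{equation*}
is equal to $\tfrac{(-1)^n}{n!}\psi_1^{(n)}(t)$. On the other hand, Lemma~\ref{le:mcmullen_z_reg} provides the McMullen-type decomposition
\begin{equation*}
\oY_\delta(\Ind_K+t) \;=\; \sum_{i=0}^n \varphi_{i,\delta}(t,K),
\end{equation*}
where each $K\mapsto \varphi_{i,\delta}(t,K)$ is homogeneous of degree $i$. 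Evaluating at $K=[0,\lambda]^n = \lambda[0,1]^n$ and using homogeneity yields $\varphi_{i,\delta}(t,[0,\lambda]^n) = \lambda^i \varphi_{i,\delta}(t,[0,1]^n)$, so the $\lambda^n$ coefficient in this expansion is $\varphi_{n,\delta}(t,[0,1]^n)$.

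Matching the two polynomial expressions in $\lambda$ coefficient by coefficient then forces
\begin{equation*}
\varphi_{n,\delta}(t,[0,1]^n) \;=\; \frac{(-1)^n}{n!}\,\psi_1^{(n)}(t).
\end{equation*}
At this point Lemma~\ref{le:y_only_vol} does the work: it asserts that $\varphi_{i,\delta}$ is non-negative for every $1\le i\le n$, in particular for $i=n$. Hence the left-hand side is non-negative, and therefore $(-1)^n\psi_1^{(n)}(t)\ge 0$ for every $t\in\R$, as required. I do not anticipate a genuine obstacle; the only subtle point is to note that both Lemma~\ref{le:diff_lemma} and Lemma~\ref{le:mcmullen_z_reg} express the same function of $\lambda$ as a polynomial, so uniqueness of polynomial coefficients legitimately allows term-by-term comparison without further justification.
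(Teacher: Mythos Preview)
Your proposal is correct and is essentially identical to the paper's own proof: the paper also equates the two polynomial-in-$\lambda$ expressions for $\oY_\delta(\Ind_{[0,\lambda]^n}+t)$ coming from Lemma~\ref{le:diff_lemma} and from Lemma~\ref{le:y_only_vol} (which packages the McMullen decomposition of Lemma~\ref{le:mcmullen_z_reg} together with the non-negativity of $\varphi_{i,\delta}$ for $1\le i\le n$), and reads off the sign of $(-1)^n\psi_1^{(n)}$ from the top coefficient.
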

\begin{proof}
By Lemma~\ref{le:y_only_vol}, we have for $\lambda,\delta>0$ and $t\in\R$.
$$\oY_{\delta}(\Ind_{[0,\lambda]^n}+t) = \sum_{k=0}^{n} \lambda^k\varphi_{k,\delta}(t,[0,1]^n),$$
where $\varphi_{k,\delta}(t,[0,1]^n)$ is non-negative for every $1\leq k\leq n$. Furthermore, by Lemma~\ref{le:diff_lemma}, we have
$$\oY_{\delta}(\Ind_{[0,\lambda]^n}+t) = \frac{1}{n!} \sum_{k=0}^{n} \lambda^k (-1)^k c_{n,k}(\delta) \psi_1^{(k)}(t),$$
with $c_{n,n}(\delta)\equiv 1$. The result now follows by equating coefficients.
\end{proof}

\begin{lemma}
\label{le:moment_cond}
Let $\zeta:\R\to\R$ be $n$-times continuously differentiable. If $\lim_{t\to+\infty}\zeta(t)=0$ and $\zeta^{(n)}$ has constant sign on $[t_0,+\infty)$ for some $t_0\in\R$, then
$$\int_{0}^{+\infty} r^{n-1} \frac{(-1)^n}{(n-1)!} \zeta^{(n)}(r+t) \d r = \zeta(t),$$
for every $t\in\R$. In particular, $\zeta^{(n)}$ has finite moment of order $n-1$. Moreover,
$$\lim_{r\to+\infty} r^k \zeta^{(k)}(r+t)=0$$
for every $0\leq k \leq n-1$ and $t\in\R$.
\end{lemma}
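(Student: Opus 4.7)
The plan is first to establish that each derivative $\zeta^{(k)}$ with $0\le k\le n-1$ vanishes at $+\infty$ with a specific alternating sign, then to read off the integral identity, the moment bound, and the polynomial decay simultaneously from a single iterated-integration formula, and finally to extend the identity from $t\ge t_0$ to all $t\in\R$ by Taylor's theorem. Without loss of generality, assume $\zeta^{(n)}\ge 0$ on $[t_0,+\infty)$ (otherwise replace $\zeta$ by $-\zeta$). The main preliminary claim is that for $0\le k\le n-1$ we have $\lim_{r\to+\infty}\zeta^{(k)}(r)=0$ and $(-1)^{n-k}\zeta^{(k)}(r)\ge 0$ for every $r\ge t_0$. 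I would prove this by downward induction on $k$: since $\zeta^{(k+1)}$ has constant sign $(-1)^{n-k-1}$, $\zeta^{(k)}$ is monotone on $[t_0,+\infty)$ and so has a limit $L\in[-\infty,+\infty]$; any nonzero value of $L$ propagates through $n-k$ further integrations to force $|\zeta(r)|\to+\infty$ polynomially, contradicting the hypothesis $\zeta(r)\to 0$, while if $L=0$ the direction of monotonicity fixes the sign of $\zeta^{(k)}$. The careful bookkeeping of alternating signs across this cascade of integrations is the step I expect to require the most attention.

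With the sign pattern in hand, a second downward induction on $k$ produces
\begin{equation*}
(-1)^{n-k}\zeta^{(k)}(R)=\int_R^{+\infty}\zeta^{(n)}(s)\,\frac{(s-R)^{n-k-1}}{(n-k-1)!}\,\d s\qquad (R\ge t_0,\ 0\le k\le n-1).
\end{equation*}
The base case $k=n-1$ is the fundamental theorem of calculus applied to $\zeta^{(n-1)}$ (whose limit at $+\infty$ kills the boundary term); the induction step substitutes the level-$(k+1)$ identity into $\zeta^{(k)}(R)=-\int_R^{+\infty}\zeta^{(k+1)}(s)\,\d s$ and applies Tonelli's theorem, which is legitimate because all integrands are non-negative by the previous step. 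Evaluating at $k=0$, $R=t_0$ already yields $\int_{t_0}^{+\infty}s^{n-1}\zeta^{(n)}(s)\,\d s<+\infty$ (after comparing $s^{n-1}$ with $(s-t_0)^{n-1}$ outside a bounded range), which is the finite-moment assertion. The crude estimate $R^k(s-R)^{n-k-1}\le s^{n-1}$ for $s\ge R\ge 0$ applied to the same identity then gives
\begin{equation*}
R^k|\zeta^{(k)}(R)|\le\frac{1}{(n-k-1)!}\int_R^{+\infty}s^{n-1}\zeta^{(n)}(s)\,\d s,
\end{equation*}
which is a tail of a convergent integral and hence tends to $0$, proving $\lim_{r\to+\infty}r^k\zeta^{(k)}(r+t)=0$ for every $t\in\R$ (since $r+t\ge t_0$ for $r$ large).

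To upgrade the integral representation of $\zeta(t)$ from $t\ge t_0$ to arbitrary $t\in\R$, I would apply Taylor's theorem with integral remainder to $\zeta$ based at a point $T>\max(t,t_0)$,
\begin{equation*}
\zeta(t)=\sum_{k=0}^{n-1}\frac{(-1)^k(T-t)^k}{k!}\zeta^{(k)}(T)+(-1)^n\int_t^T\frac{(s-t)^{n-1}}{(n-1)!}\zeta^{(n)}(s)\,\d s,
\end{equation*}
and let $T\to+\infty$: the polynomial piece vanishes by the decay result just proved, while the remainder converges absolutely by the finite-moment estimate. A substitution $r=s-t$ in the remaining integral then delivers the claimed identity $\zeta(t)=\int_0^{+\infty}\frac{(-1)^n r^{n-1}}{(n-1)!}\zeta^{(n)}(r+t)\,\d r$ for every $t\in\R$.
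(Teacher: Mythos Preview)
Your proof is correct and takes a genuinely different route from the paper's. The paper argues by induction on $n$: assuming the result for $n-1$, a single integration by parts turns the target integral into the $(n-1)$-integral plus a boundary term $R^{n-1}\frac{(-1)^n}{(n-1)!}\zeta^{(n-1)}(R+t)$, and most of the paper's effort goes into showing that this boundary term vanishes as $R\to+\infty$, via a somewhat ad hoc contradiction argument involving one further integration. Your approach instead fixes $n$ and runs a downward induction on $k$: you first pin down the limits (and hence signs) of all intermediate derivatives $\zeta^{(k)}$ by propagating a nonzero limit through successive antiderivatives to contradict $\zeta\to 0$, and then package the repeated integration into the single Peano-kernel identity $(-1)^{n-k}\zeta^{(k)}(R)=\int_R^{+\infty}\zeta^{(n)}(s)\,\frac{(s-R)^{n-k-1}}{(n-k-1)!}\,\d s$ via Tonelli. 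This buys you the decay $r^k\zeta^{(k)}(r+t)\to 0$ directly as the tail of a convergent integral through the elementary bound $R^k(s-R)^{n-k-1}\le s^{n-1}$, completely bypassing the paper's contradiction step; the extension to all $t\in\R$ via Taylor's formula with integral remainder is then clean and transparent. The paper's approach, by contrast, is marginally more self-contained (no Tonelli) and delivers the integral identity for arbitrary $t$ in one pass without a separate extension step.
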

\begin{proof}
Throughout the proof we will assume that $(-1)^n\zeta^{(n)}$ is non-negative on $[t_0,+\infty)$, since we can always consider $-\zeta$ instead of $\zeta$. We use induction on $n$ and start with the case $n=1$. Integration by parts together with the assumption on $\zeta$ gives
\begin{align*}
\int_0^{+\infty} (-1) \zeta'(r+t) \d r &= \lim_{R\to+\infty} \int_0^{R} (-1) \zeta'(r+t)\d r\\
&= \lim_{R\to+\infty} \big( \zeta(t)-\zeta(t+R)\big)\\
&= \zeta(t),
\end{align*}
for every $t\in\R$.\par
Next, let $n\geq 2$ and assume that the statement holds true for $n-1$. Since $(-1)^n\zeta^{(n)}$ is non-negative on $[t_0,+\infty)$, the function $(-1)^{n-1}\zeta^{(n-1)}$ is non-increasing on $[t_0,+\infty)$ and therefore $(-1)^{n-1}\zeta^{(n-1)}$ has constant sign on $[t_1,+\infty)$ for some $t_1\in\R$. Hence, by the induction hypothesis
$$\int_0^{+\infty} r^{n-2} \frac{(-1)^{n-1}}{(n-2)!} \zeta^{(n-1)}(r+t) \d r = \zeta(t)$$
and $\lim_{r\to+\infty} r^k \zeta^{(k)}(r+t)=0$ for every $0\leq k\leq n-2$ and $t\in\R$. In particular, $\zeta^{(n-1)}$ has finite moment of order $n-2$ and therefore $\lim_{t\to+\infty} \zeta^{(n-1)}(t)=0$, which implies that $(-1)^{n-1}\zeta^{(n-1)}(t)\geq 0$ for every $t\geq t_0$.
 Using integration by parts, we obtain
\begin{multline}
\label{eq:int_by_parts}
\int_0^{R} r^{n-1} \frac{(-1)^n}{(n-1)!} \zeta^{(n)}(r+t) \d r\\= R^{n-1} \frac{(-1)^n}{(n-1)!}\zeta^{(n-1)}(R+t) + \int_0^{R} r^{n-2} \frac{(-1)^{n-1}}{(n-2)!} \zeta^{(n-1)}(r+t) \d r,
\end{multline}
for every $t\in\R$ and $R>0$. Since $r^{n-1} \frac{(-1)^n}{(n-1)!} \zeta^{(n)}(r+t)\geq 0$ for $r\geq \max\{0,t_0-t\}$, there exists
$$C(t)=\lim_{R\to+\infty} \int_0^{R} r^{n-1} \frac{(-1)^n}{(n-1)!} \zeta^{(n)}(r+t) \d r \in (-\infty,+\infty],$$
for every $t\in\R$. Hence, (\ref{eq:int_by_parts}) implies that $R^{n-1} \frac{(-1)^n}{(n-1)!}\zeta^{(n-1)}(R+t)$ converges to
\begin{align*}
D(t)&=C(t)-\lim_{R\to +\infty} \int_0^{R} r^{n-2} \frac{(-1)^{n-1}}{(n-2)!} \zeta^{(n-1)}(r+t) \d r\\
&= C(t)-\zeta(t) \in (-\infty,+\infty]
\end{align*}
as $R\to+\infty$. Since $(-1)^{n-1}\zeta^{(n-1)}(R+t)\geq 0$ for every $R\geq t_0-t$ we have
$$R^{n-1} \frac{(-1)^n}{(n-1)!}\zeta^{(n-1)}(R+t)\leq 0$$
for every $R\geq \max\{0,t_0-t\}$ and therefore $D(t)\leq 0$, which is only possible if $C(t)< +\infty$ for every $t\in\R$.
\par
It remains to show that $D(t)\equiv 0$. Assume on the contrary that there exists $\bar{t}\in\R$ such that $(n-1)!D(\bar{t})=2\eta <0$. It follows that
$$r^{n-1} (-1)^n \zeta^{(n-1)}(r+\bar{t}) \leq \eta < 0$$
for every $r\geq r_0$ with $r_0\geq 0$ large enough and therefore
$$(-1)^n \zeta^{(n-1)}(r+\bar{t})\leq \frac{\eta}{r^{n-1}}.$$
Consequently,
\begin{align*}
(-1)^n \zeta^{(n-2)}(r+\bar{t}) &= (-1)^n \zeta^{(n-2)}(r_0+\bar{t}) + \int_{r_0}^r (-1)^n \zeta^{(n-1)}(s+\bar{t}) \d s\\
&\leq (-1)^n \zeta^{(n-2)}(r_0+\bar{t}) + \eta \int_{r_0}^r \frac{1}{s^{n-1}} \d s\\
&= (-1)^n \zeta^{(n-2)}(r_0+\bar{t}) + \eta \begin{cases} \log(r)-\log(r_0),\quad &n=2\\
\frac{1}{n-2}\big(\tfrac{1}{r_0^{n-2}} - \tfrac{1}{r^{n-2}}\big), \quad & n>2.
 \end{cases}
\end{align*}
If $n=2$, this shows that
\begin{align*}
0 &= \lim_{r\to+\infty} \zeta(r+\bar{t})\\
&\leq \zeta(r_0 + \bar{t}) - \eta \log(r_0) + \eta \lim_{r\to+\infty} \log(r)\\
&= -\infty
\end{align*}
which is a contradiction. If $n>2$, we obtain
\begin{align*}
0 &= \lim_{r\to+\infty} (-1)^n r^{n-2} \zeta^{(n-2)}(r+\bar{t})\\
&\leq -\frac{\eta}{n-2} + \lim_{r\to+\infty} r^{n-2} \left((-1)^n \zeta^{(n-2)}(r_0 +\bar{t}) + \frac{\eta}{(n-2) r_0^{n-2}} \right)\\
&=-\frac{\eta}{n-2} + \lim_{r\to+\infty} r^{n-2} \frac{(-1)^n (n-2) r_0^{n-2} \zeta^{(n-2)}(r_0+\bar{t}) + \eta}{(n-2) r_0^{n-2}},
\end{align*}
which goes to $-\infty$ if $r_0$ is large enough.

\end{proof}

\subsection{Proof of the Theorem}
If $\zeta_0,\zeta_1,\zeta_2:\R\to[0,+\infty)$ are continuous functions such that $\zeta_1$ has finite moment of order $n-1$ and $\zeta_2(t)=0$ for all $t\geq T$ with some $T\in\R$, then Lemma~\ref{le:min_is_a_val}, Lemma~\ref{le:vol_is_a_val} and Lemma~\ref{le:polar_vol_is_a_val} show that
$$u\mapsto \zeta_0(\min\nolimits_{x\in\Rn} u(x)) + \int_{\Rn} \zeta_1(u(x)) \d x + \int_{\dom u^*} \zeta_2(\nabla u^*(x)\cdot x - u^*(x)) \d x,$$
defines a non-negative, continuous, $\SLn$ and translation invariant valuation on $\CVf$.\par
Conversely, let $\oZ:\CVf\to[0,+\infty)$ be a continuous, $\SLn$ and translation invariant valuation on $\CVf$. By Lemma~\ref{le:growth_functions} the valuation $\oZ$ has non-negative, continuous growth functions $\psi_0,\psi_1,\psi_2$. Lemma~\ref{le:growth_functions_cond} shows that there exists $T\in\R$ such that $\psi_2(t)=0$ for all $t\geq T$. Furthermore, by Lemma~\ref{le:gf_diffable} the function $\psi_1$ is $n$-times continuously differentiable and $\zeta_1(t):=\frac{(-1)^n}{n!} \psi_1^{(n)}(t)$ is non-negative. Moreover, Lemma~\ref{le:growth_functions_cond} together with Lemma~\ref{le:moment_cond} shows that $\zeta_1$ has finite moment of order $n-1$ and
$$n\int_0^{+\infty} r^{n-1} \zeta_1(r+t)\d r = \psi_1(t),$$
for every $t\in\R$. Finally, for $u=\l_{\lambda B^n} +t$ with $\lambda >0$ and $t\in\R$ we have $u^*=\Ind_{\lambda^{-1} B^n}-t$ and furthermore
\begin{align*}
\oZ(u) &= \psi_0(t) + \psi_1(t) V_n(\lambda B^n) + \psi_2(t) V_n^*(\lambda B^n)\\
&= \psi_0(t) + \lambda^n V_n(B^n)\psi_1(t) + \psi_2(t) V_n(\lambda^{-1} B^n)\\
&=\psi_0(t) + \lambda^n n V_n(B^n) \int_0^{+\infty} r^{n-1} \zeta_1(r+t) \d r + \psi_2(t) V_n(\lambda^{-1} B^n)\\
&= \psi_0(t) + \lambda^n \int_{\Rn} \zeta_1(|x|+t) \d x + \psi_2(t)V_n(\lambda^{-1} B^n)\\
&= \psi_0(t) + \int_{\Rn} \zeta_1\big(\tfrac{|x|}{\lambda}+t\big)\d x + \int_{\lambda^{-1} B^n} \psi_2(t) \d x\\
&=\psi_0(\min\nolimits_{x\in\Rn} u(x)) + \int_{\Rn} \zeta_1(u(x)) \d x + \int_{\dom{u^*}} \psi_2(\nabla u^*(x)\cdot x - u^*(x))\d x.
\end{align*}
By the first part of the proof,
$$u\mapsto \psi_0(\min\nolimits_{x\in\Rn} u(x)) + \int_{\Rn} \zeta_1(u(x)) \d x + \int_{\dom{u^*}} \psi_2(\nabla u^*(x)\cdot x - u^*(x))\d x$$
defines a non-negative, continuous, $\SLn$ and translation invariant valuation on $\CVf$ and by homogeneity it is easy to see that $\psi_0,\psi_1,\psi_2$ are its growth functions. Thus, Lemma~\ref{le:val_det_by_gf} completes the proof of the Theorem.
\hfill\qedsymbol

\section*{Acknowledgments}
The author would like to thank Jin Li for his helpful remarks regarding the proof of Lemma~\ref{le:reduction}.

\footnotesize

\bigskip\bigskip
\parindent 0pt\footnotesize

\parbox[t]{8.5cm}{
Fabian Mussnig\\
Institut f\"ur Diskrete Mathematik und Geometrie\\
Technische Universit\"at Wien\\
Wiedner Hauptstra\ss e 8-10/1046\\
1040 Wien, Austria\\
e-mail: fabian.mussnig@tuwien.ac.at}

\end{document}